\theoremstyle{definition}
\newtheorem{theorem}{Theorem}[section]
\newtheorem{remark}[theorem]{Remark}
\newtheorem{proposition}[theorem]{Proposition}
\newtheorem{definition}[theorem]{Definition}
\newtheorem{lemma}[theorem]{Lemma}
\newtheorem{corollary}[theorem]{Corollary}
\providecommand{\keywords}[1]{\textbf{\textit{Keywords:}} #1}
\DeclareMathOperator*{\argmin}{arg\,min}
\DeclareMathOperator*{\supp}{supp}
\DeclareMathOperator{\sm3}{Sym_0(3)}
\DeclareMathOperator{\ps2}{\mathcal{P}(\mathbb{S}^2)}
\DeclareMathOperator{\pus2}{\mathcal{P}_U(\mathbb{S}^2)}
\DeclareMathOperator{\mf}{\mathcal{F}}
\DeclareMathOperator{\s2}{\mathbb{S}^2}
\DeclareMathOperator{\wsl}{\overset{*}{\rightharpoonup}}
\newcommand{\mesr}{%
  \,\raisebox{-.127ex}{\reflectbox{\rotatebox[origin=br]{-90}{$\lnot$}}}\,%
}
\begin{document}
\doublespace 
\title{An analysis of equilibria in dense nematic liquid crystals}
\author{Jamie M. Taylor\footnote{Address for correspondence: Mathematical Institute, Radcliffe Observatory Quarter, Woodstock Road, Oxford, OX2 6GG. Email: \texttt{jamie.taylor@maths.ox.ac.uk}}}
\date{}
\maketitle
\begin{abstract}
This paper is concerned with the rigorous analysis of a recently proposed model of Zheng {\it et. al.} for describing nematic liquid crystals within the dense regime, with the orientation distribution function as the variable. A key feature of the model is that in high density regimes all non-trivial minimisers are zero on a set of positive measure so that $L^\infty$ variations cannot generally be taken about minimisers. In particular, it is unclear if the Euler-Lagrange equation is well defined, and if local minimisers satisfy it. It will be shown that there exists an analogue of the Euler-Lagrange equation that is satisfied by $L^p$ local minimisers by reducing the minimisation problem to an equivalent finite-dimensional saddle-point problem, obtained by observing that on certain subsets of the domain the free-energy functional is convex so that duality methods can be applied. This analogue of the Euler-Lagrange equation is then shown to be equivalent to a vanishing variation criteria on a certain family of non-linear curves on which the free-energy functional is sufficiently smooth. All critical points of the finite-dimensional saddle-point problem also correspond to all probability distributions where these non-linear variations vanish. Furthermore, the analysis provides results on some qualitative phase behaviour of the model. 
\end{abstract}
\keywords{Liquid crystals, Euler-Lagrange equation, Onsager model}
\section{Introduction}
The Onsager model \cite{onsager1949effects}, based on the second virial expansion, is the now classical variational description of phase transitions in liquid crystalline systems, describing equilibrium configurations of molecular systems by critical points of a free-energy functional. In the simplest case of uniaxial molecules forming a nematic phase with a Maier-Saupe-like excluded volume term \cite{maier1959einfache}, a spatially homogeneous system is identified with a probability distribution on the sphere, $f \in \ps2$, describing the orientations of axially symmetric molecules. At fixed temperature and concentration, we look for local minimisers of 
\begin{equation}\int_{\s2}f(p)\ln f(p)-\frac{U_0\rho f(p)}{2}\int_{\s2}f(q)\left((p\cdot q)^2-\frac{1}{3}\right)\,dq\,dp,\end{equation}
with $U_0>0$ a fixed constant, $\rho>0$ the number density. While the model provides a qualitatively accurate description of the phase behaviour of nematic liquid crystals, the derivation is only valid in more dilute regimes, and in particular there is no barrier to prevent arbitrarily high number density. 

The recent work of Zheng {\it et. al.} \cite{zheng2016density} proposes a new free-energy functional that aims to demonstrate the consequences of a lack of available configuration space in high density regimes. At fixed concentration and in the absence of thermal effects this gives a free energy of the form 
\begin{equation}\mf(f,\eta)=\int_{\s2}f(p)\ln f(p)-f(p)\ln\left(\int_{\s2}\left((p\cdot q)^2-\frac{1}{3}\right)f(q)\,dq-\eta\right)\,dp,\end{equation}
where $\eta=\frac{2(\rho c-1)}{3\rho d}$ is a dimensionless parameter, increasing in the number density $\rho$, and $c,d>0$ are constants related to the dimensions of the molecule. For the majority of this work, $\eta$ will be fixed and the explicit dependence of $\mf$ on $\eta$ will be supressed. The key feature of the model is that at higher densities, molecules must be more strongly aligned in order for the energy to be finite, and minimisers must in some cases be zero on some non-empty subset of $\s2$, in stark contrast to the solutions of the Maier-Saupe model which are always bounded away from zero \cite{fatkullin2005critical}. The interpretation of this is that at higher densities it becomes not just energetically unfavourable but impossible for molecules to align against the order. Furthermore there exists a saturation density at which there are no finite energy configurations.

Within the work of Zheng {\it et. al.} the model was derived from more elementary principles, the Euler-Lagrange equation for minimisers was given and there was a numerical study illustrating novel phase behaviour. This paper aims to rigorously address issues surrounding equilibria raised in their work. In particular, it is not immediately clear if minimisers will satisfy the Euler-Lagrange equation due to having non-trivial support, and generally the free energy can lack sufficient smoothness at local minimisers for arbitrary variations to be taken. In order to establish an Euler-Lagrange equation satisfied by minimisers we will instead split the problem into two more manageable steps, in a similar method to \cite[Section 4.1]{taylor2015maximum}. By restricting ourselves only to probability distributions $f$ such that the so-called Q-tensor 
\begin{equation}Q=\int_{\s2}\left(p\otimes p-\frac{1}{3}I\right)f(p)\,dp\end{equation}
is fixed, we see that the free energy becomes convex with linear constraints and can be tackled using the results of Borwein and Lewis \cite{borwein1991duality}. Once this problem has been tackled, it remains only to minimise over the set of admissible Q-tensors. By considering this finite-dimensional problem, we can obtain the Euler-Lagrange equation, which is consistent with the results of Zheng {\it et. al.}, and prove that $L^p$-local minimisers satisfy it. This is then seen to be equivalent to considering a particular set of curves $f:(-\delta,\delta)\to\ps2$ and solving a vanishing derivative condition $\left.\frac{d}{dt}\mf(f_t)\right|_{t=0}=0$

The structure of the paper and key results are as follows. In \Cref{secMinimisers} we will first prove that global minimisers of the free energy exist if and only if $\eta<\frac{2}{3}$, with no finite energy configurations otherwise (\Cref{theoremExistence}). We will also investigate local minimisers that are bounded away from zero and infinity by taking $L^\infty(\s2)$ variations. It will be shown that such a method can produce certain trivial minimisers, but in general it does not produce satisfactory results, which is more precisely stated in \Cref{corollaryIsotropicDrama}. In particular, only the isotropic state when $\eta<0$ and a continuum of solutions when $\eta=-\frac{2}{15}$ are found by this method. \Cref{secAuxiliaryProblem} will be concerned with reducing the global minimisation problem to a macroscopic minimisation problem over the set of Q-tensors. More precisely, if $A(Q)=\left\{ f \in \ps2 : \int_{\s2}f(p)\left(p\otimes p-\frac{1}{3}I\right)\,dp\right\}$, then we split the minimisation problem as 
\begin{equation}\min\limits_{f \in \ps2}\mathcal{F}(f)=\min\limits_{Q \in \mathcal{Q}} \left(\min\limits_{ f \in A(Q)} \mathcal{F}(f)\right)=\min\limits_{Q \in \mathcal{Q}}J(Q),\end{equation}
where the inner minimisation problem defining the macroscopic functional $J$ is convex with continuous linear constraints, and as such can be tackled by the techniques in \cite{borwein1991duality}. By showing that the macroscopic functional $J$ is sufficiently regular (\Cref{propJDerivatives}),  we can obtain a critical point condition for global minimisers which acts as the Euler-Lagrange equation for the free energy $\mathcal{F}$. \Cref{theoremGlobalMinsEquivalent} is the main result of this section, showing the equivalence of the minimisation problems, an equivalent finite-dimensional saddle-point problem and the relationship between their solutions. Explicitly, global minimisers must be of the form 
\begin{equation}
f(p)=\frac{1}{Z}\exp(\Lambda p\cdot p)\max(Qp\cdot p-\eta,0),
\end{equation}
where $\Lambda,Q \in \sm3$ satisfy 
\begin{equation}
\begin{split}
Q=& \int_{\mathbb{S}^2}f(p)\left(p\otimes p-\frac{1}{3}I\right)\,dp,\\
\Lambda =& \int_{\mathbb{S}^2}\frac{f(p)}{Qp\cdot p-\eta}\left(p\otimes p-\frac{1}{3}I\right)\,dp,
\end{split}
\end{equation}
and also solve the saddle-point problem 
\begin{equation}\min\limits_{Q_0 \in \mathcal{Q}}\max\limits_{\lambda \in \sm3} Q_0 \cdot \lambda - \ln \left(\int_{\s2}\exp(\lambda p\cdot p)\max(Q_0 p\cdot p-\eta,0)\,dp\right).\end{equation}
Furthermore, $Q$ will also be a global minimiser of the macroscopic function $J$. 

In \Cref{secLocalMinimisers}, we will consider local minimisers of $\mathcal{F}$. While the decomposition of the minimisation problem was effective for finding global minimisers, the continuity of the map from a given Q-tensor to its optimal energy probability distribution will also allow us to make similar claims for local minimisers. In particular, local minimisers of the macroscopic function $J$ correspond to local minimisers of $\mathcal{F}$ in a statement highly analogous to that for global minimisers (\Cref{theoremLocalMinsEQuiv}), and they satisfy the same critical point condition. Furthermore the equivalence shows that local minimisers with respect to the topologies of $L^p$ for $p \in [1,+\infty]$, $W^{1,p}$ for $p \in [1,\infty)$ and $C^\alpha$ for $\alpha<1$ are all equivalent (\Cref{corollaryNoLavrentiev}).

\Cref{secRelatedModels} will be concerned with related models that can be tackled using previous results in this work. \Cref{secThermal} will account for thermal effects represented by an adjusted free energy 
\begin{equation}\mathcal{F}_\tau(f)=\mathcal{F}(f)-\frac{1}{2\tau}\left|\int_{\s2}f(p)\left(p\otimes p-\frac{1}{3}I\right)\,dp\right|^2,\end{equation}
where $\tau>0$ is proportional to temperature. The results from previous sections can be extended in a straightforward manner to provide an Euler-Lagrange equation for the thermal model (\Cref{theoremMinsEquivThermal}). A local analysis around the isotropic state for $\eta<0$ demonstrates the stability of the isotropic state for $\tau>\tau_c=\frac{15}{2}\left(\frac{15\eta}{2+15\eta}\right)^{2}$ and instability if $\tau<\tau_c$ (\Cref{propBifurcation}). In particular, this gives a re-emergence of local stability for the isotropic phase as concentration is increased at fixed temperature, in contrast to more classical theories. In \Cref{subsecUniaxial} we consider uniaxial systems as a restricted class of admissible probability distributions that are rotationally invariant about some axis. Experimentally, one often observes such symmetry in nematic systems, and the theory is generally simpler due to having fewer degrees of freedom. In this subsection we rigorously obtain the Euler-Lagrange equation for local minimisers used in \cite{zheng2016density} (\Cref{propMinsEquivUniaxial}). Furthermore, the results in this subsection allows us to show the existence of certain uniaxial critical points to the unconstrained problem in higher density regimes (\Cref{corollaryUniaxial1,corollaryUniaxial2}), providing further qualitative information about the phase diagram for the unconstrained model.

\section{Finding solutions by variations}
\label{secMinimisers}
\begin{definition}[Notation and energy]
Let $\sm3\subset\mathbb{R}^{3\times 3}$ denote the set of traceless, symmetric $3\times 3$ matrices. Let $\mathcal{Q}=\{Q \in \sm3 : v_{\min}(Q)>-\frac{1}{3}\}$ be the set of physical Q-tensors. Let $\mathcal{P}(\mathbb{S}^2)=\left\{f\in L^1(\mathbb{S}^2):f\geq 0 \text{ a.e. }, \int_{\mathbb{S}^2}f\,dp=1\right\}$. All integrals $\int_{\mathbb{S}^2}g(p)\,dp$ use $dp$ to denote the 2-dimensional Hausdorff measure on the sphere. Given the parameter $\eta \in \mathbb{R}$, define the energy functional $\mathcal{F} : \mathcal{P}(\mathbb{S}^2)\times \mathbb{R}\to\mathbb{R}\cup\{+\infty\}$ by 
\begin{equation}\mathcal{F}(f,\eta)=\int_{\mathbb{S}^2}f(p)\ln f(p) -f(p)\ln\left(\int_{\mathbb{S}^2}\left((p\cdot q)^2-\frac{1}{3}\right)f(q)\,dq-\eta \right)\,dp.\end{equation}
$-\ln$ is extended by $+\infty$ for non-positive argument, and the convention $0\times +\infty =0$ is taken. When unambiguous, the dependence of $\mf$ on $\eta$ will be suppressed, so that $\mf(f,\eta)=\mf(f)$. For brevity, given $f\in\mathcal{P}(\mathbb{S}^2)$, define 
\begin{equation}Q=\int_{\mathbb{S}^2}\left(p\otimes p -\frac{1}{3}I\right)f(p)\,dp \in \mathcal{Q}.\end{equation}
\end{definition}

Before any analysis, of course one must ensure that minimisers of the energy actually exist. It will be shown that solutions exist if and only if $\eta<\frac{2}{3}$, although this is due to a lack of finite-energy configurations, rather than minimising sequences being lost due to a lack of lower semicontinuity or coercivity of the functional.

\begin{lemma}\label{lemmaNonEmptyDomain}
There exists some $f \in \mathcal{P}(\mathbb{S}^2)$ with $\mathcal{F}(f)<+\infty$ if and only if $\eta <\frac{2}{3}$. 
\end{lemma}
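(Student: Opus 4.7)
The plan is to exploit the identity
\begin{equation*}
\int_{\s2}\left((p\cdot q)^2-\frac{1}{3}\right)f(q)\,dq = Qp\cdot p,
\end{equation*}
which follows from $|p|=1$ and the definition of $Q$. Since $Q+\frac{1}{3}I=\int_{\s2} q\otimes q\, f(q)\,dq$ is positive semidefinite with trace $1$, its largest eigenvalue is at most $1$, so $Qp\cdot p \leq \frac{2}{3}$ for every $p\in\s2$; this bound is approached only in the limit of $f$ concentrating at an antipodal pair, and so is never attained for $f\in\ps2$.

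For the forward direction, suppose $\mf(f)<+\infty$. Because $-\ln$ is extended by $+\infty$ for non-positive argument and the convention $0\cdot(+\infty)=0$ is used, finiteness of the integral forces $Qp\cdot p-\eta>0$ for $f$-almost every $p$. In particular there exists at least one $p\in\s2$ with $Qp\cdot p>\eta$, and combining with $Qp\cdot p\leq\frac{2}{3}$ yields $\eta<\frac{2}{3}$.

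For the reverse direction, given $\eta<\frac{2}{3}$ the strategy is to exhibit an explicit finite-energy competitor. The natural candidate is the normalised indicator function $f_\delta$ of a pair of antipodal spherical caps of half-angle $\delta$ centred on $\pm e_3$. By rotational symmetry the associated Q-tensor is diagonal of the form $\mathrm{diag}(-a_\delta/2,-a_\delta/2,a_\delta)$ with $a_\delta\to\frac{2}{3}$ as $\delta\to 0$, and a short computation shows $Q_\delta p\cdot p$ converges uniformly to $\frac{2}{3}$ on $\supp f_\delta$. Hence for $\delta$ small enough, $Q_\delta p\cdot p - \eta$ is bounded between two positive constants on $\supp f_\delta$, so the entropy term $\int_{\s2} f_\delta\ln f_\delta\,dp$ is finite (it equals $\ln c_\delta$ for a piecewise-constant density) and the mean-field term $-\int_{\s2} f_\delta\ln(Q_\delta p\cdot p-\eta)\,dp$ is bounded by the logarithms of those constants.

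No step poses a serious obstacle: the forward direction is essentially a one-line consequence of the uniform bound $Qp\cdot p\leq\frac{2}{3}$, and the reverse direction reduces to estimating the Q-tensor of a symmetric indicator function. The only point requiring care is the verification that $a_\delta\to\frac{2}{3}$ as $\delta\to 0$, which follows immediately since the mass of $f_\delta$ is supported where $(p\cdot e_3)^2\to 1$.
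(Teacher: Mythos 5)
Your proof is correct and takes essentially the same approach as the paper: the forward direction in both cases rests on the uniform bound $Qp\cdot p \leq \frac{2}{3}$ forced by the eigenvalue constraint, and the reverse direction constructs a normalised indicator of a small spherical cap (the paper uses a single cap, you use an antipodal pair, but the Q-tensor computation and the uniform convergence of $Qp\cdot p$ to $\frac{2}{3}$ on the support are the same). The only cosmetic remark is that your parenthetical about the bound $\frac{2}{3}$ never being attained, while true, is not actually needed for the forward implication.
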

\begin{proof}
Assume that $\eta\geq\frac{2}{3}$. Then for any $f\in\mathcal{P}(\mathbb{S}^2)$, and corresponding Q-tensor $Q \in \mathcal{Q}$, it must hold that $Qp\cdot p-\eta\leq 0$ for all $p \in\supp(f)\subset\mathbb{S}^2$ due to the eigenvalue constraint on $Q$. In particular, $-\ln(Qp\cdot p-\eta)=+\infty$ for all $p\in\supp(f)$, and $\mathcal{F}(f)=+\infty$. 

Assume $\eta <\frac{2}{3}$. In order to demonstrate that there exists some admissible $f$, in the sense that it has finite energy, it is sufficient, using that $-\ln(\cdot)$ is continuous on its domain, to show that there exists some $f$, so that $\text{supp}(f)\subset \{\eta+\delta \leq Qp\cdot p < \frac{2}{3}\}$, and also having finite entropy. The upper bound is trivial, due to the eigenvalue constraint on the Q-tensor. Let $e \in \mathbb{S}^2$ be arbitrary, and $\epsilon >0$. Define $f^\epsilon = \frac{1}{2\pi \epsilon}\chi_{\{p\cdot e > 1-\epsilon\}}$. A straightforward calculation gives that $f \in \mathcal{P}(\mathbb{S}^2)$, and $Q^\epsilon = \left(1-\frac{3}{2}\epsilon + \frac{1}{2}\epsilon^2\right)\left(e\otimes e -\frac{1}{3}I\right)$. Therefore $Q^\epsilon p \cdot p = \left(1-\frac{3}{2}\epsilon + \frac{1}{2}\epsilon^2\right)\left((p\cdot e)^2-\frac{1}{3}I\right)$. It suffices to show that, for some $\epsilon>0$, $\{p\cdot e > 1-\epsilon\}=\text{supp}(f^\epsilon) \subset \{ Q^\epsilon p \cdot p >\eta+\delta\}= \left\{(p\cdot e)^2 >\frac{2(\eta+\delta)}{2-3\epsilon+\epsilon^2}+\frac{1}{3}\right\}$. It then needs to be shown that there exists some $\epsilon>0$ so that 
\begin{equation}(1-\epsilon)^2- \frac{2(\eta+\delta)}{2-3\epsilon+\epsilon^2}-\frac{1}{3}> 0.\end{equation}
At $\epsilon = 0$, the left-hand side of this inequality is $\frac{2}{3}- (\eta+\delta)$
which by assumption is strictly positive for sufficiently small $\delta$. Then, by continuity at $\epsilon =0$ (the denominator of the rational function is zero only at $\epsilon =1,2$), this implies that for sufficiently small $\epsilon > 0$, the strict inequality holds also. Therefore $\mathcal{F}(f_\epsilon)<+\infty$ for sufficiently small $\epsilon$. 
\end{proof}

\begin{remark}
The previous proposition is a weaker form of \Cref{propAdmissibleQ}, although it is included for its more straightforward proof.
\end{remark}

\begin{lemma}\label{lemmaLowerSemiContinuous}
Let $f_j \in\mathcal{P}(\mathbb{S}^2)$, with $f_j \rightharpoonup f$ in $L^1(\s2)$. Then 
\begin{equation}\liminf\limits_{j \to \infty}\mathcal{F}(f_j)\geq \mathcal{F}(f).\end{equation}
\end{lemma}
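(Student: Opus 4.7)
The plan is to decompose $\mathcal{F} = I_1 + I_2$ with
\[
I_1(f) = \int_{\s2} f \ln f \, dp, \qquad I_2(f) = -\int_{\s2} f(p) \ln\!\big(Q p \cdot p - \eta\big) \, dp,
\]
where $Q$ is the Q-tensor associated to $f$. The entropy $I_1$ is weakly $L^1$-lower semicontinuous by the classical theory for convex integral functionals (the integrand $x \ln x$ is convex and bounded below by $-e^{-1}$), so the real work lies in $I_2$. Writing $Q_j$ for the Q-tensor of $f_j$, the starting observation is that weak $L^1$ convergence $f_j \rightharpoonup f$ tested against the bounded continuous integrand $p \otimes p - \tfrac{1}{3} I$ gives $Q_j \to Q$ in the finite-dimensional space $\sm3$, and consequently $h_j(p) := Q_j p \cdot p - \eta$ converges uniformly on $\s2$ to $h(p) := Q p \cdot p - \eta$.

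For $I_2$ I would exploit the elementary convex duality
\[
-\ln y \geq \ln \alpha + 1 - \alpha y, \qquad y, \alpha > 0,
\]
which is the tangent-line inequality for the convex function $-\ln$ at $y = 1/\alpha$. Multiplying by $f_j \geq 0$ and integrating, for any continuous $\alpha : \s2 \to \mathbb{R}$ with $\inf_{\s2} \alpha > 0$,
\[
I_2(f_j) \geq \int_{\s2} f_j \ln \alpha \, dp + 1 - \int_{\s2} f_j \, \alpha \, h_j \, dp.
\]
Since $\ln \alpha \in L^\infty$ and $\alpha h_j \to \alpha h$ uniformly, the weak $L^1$ convergence of $f_j$ forces the right-hand side to converge to $\int_{\s2} f (\ln \alpha + 1 - \alpha h) \, dp$, so
\[
\liminf_j I_2(f_j) \geq \int_{\s2} f (\ln \alpha + 1 - \alpha h) \, dp
\]
for every admissible $\alpha$.

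It remains to take the supremum over $\alpha$. Pointwise in $p$, the optimum of $\alpha \mapsto \ln \alpha + 1 - \alpha h(p)$ is attained at $\alpha = 1/h(p)$ when $h(p) > 0$, giving $-\ln h(p)$, while on $\{h \leq 0\}$ the expression is unbounded above as $\alpha \to \infty$. A monotone approximation using bounded continuous $\alpha_n$ that behave like $1/\max(h, 1/n)$ on $\{h > 0\}$ and grow without bound on a neighbourhood of $\{h \leq 0\}$ then lets one exchange supremum and integral, recovering $-\int_{\s2} f \ln h \, dp = I_2(f)$ with the extended-real convention $I_2(f) = +\infty$ when $f$ charges $\{h \leq 0\}$. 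Combined with LSC of $I_1$, this gives the lemma. The main technical hurdle is precisely this last step, in particular ensuring that in the divergent case (where $f$ has mass on $\{h \leq 0\}$) one can construct $\alpha_n$ with $\int_{\s2} f(\ln \alpha_n + 1 - \alpha_n h) \, dp \to +\infty$; this is accomplished by letting $\alpha_n$ grow on the bad set (where $-\alpha_n h \geq 0$) while remaining a valid continuous test function elsewhere, and invoking monotone convergence.
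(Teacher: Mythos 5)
Your proof is correct, but the route differs from the paper's in the choice of approximating family for the singular term, even though the overall architecture coincides: split off the entropy (LSC by convexity), note that $f_j\rightharpoonup f$ implies $Q_j\to Q$ and hence $Q_jp\cdot p-\eta\to Qp\cdot p-\eta$ uniformly on $\s2$, replace $-\ln$ by a well-behaved minorant so that weak $L^1$ convergence of $f_j$ together with uniform convergence of the Q-tensor term passes to the limit, and finally recover $-\ln$ by monotone convergence. Where you diverge is in the minorant: you use the affine tangent-line family $-\ln y\geq \ln\alpha+1-\alpha y$ (i.e.\ the Legendre-type representation of the convex function $-\ln$ as the upper envelope of its affine minorants), whereas the paper simply truncates from above, setting $\ln^M(x)=\max(\ln x,-M)$ so that $-\ln^M=\min(-\ln,M)\nearrow -\ln$ monotonically as $M\to\infty$. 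The paper's truncation has the advantage that the monotone convergence step is immediate and requires no choice of test function; your approach buys a cleaner conceptual link to convex duality, at the cost of having to verify, as you correctly flag as the technical hurdle, that a monotone sequence $\alpha_n$ (e.g.\ $\alpha_n=1/\max(h,1/n)$, which is continuous, bounded, and bounded away from zero) produces integrands increasing pointwise to $-\ln h$ on $\{h>0\}$ and diverging to $+\infty$ on $\{h\leq 0\}$, so that MCT (applicable since $\ln\alpha_n+1-\alpha_n h\geq 1-h$ is bounded below via the eigenvalue bound on $Q$) yields $\sup_n\int f(\ln\alpha_n+1-\alpha_n h)\,dp=-\int f\ln h\,dp$ in both the finite and the $+\infty$ case. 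Both proofs are sound; the paper's is marginally more economical.
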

\begin{proof}
The Shannon entropy term is trivially lower semicontinuous by convexity. It suffices to show that 
\begin{equation}
\liminf\limits_{j \to \infty}-\int_{\mathbb{S}^2}f_j(p)\ln(Q_jp\cdot p-\eta)\,dp\geq-\int_{\mathbb{S}^2}f(p)\ln(Qp\cdot p-\eta)\,dp.
\end{equation}
For $M \in \mathbb{R}$, let $\ln^M(x)=\max(\ln(x),-M)$. Then note since $Q_j \to Q$, $Q_jp\cdot p-\eta \to Qp\cdot p$ uniformly on $\mathbb{S}^2$, and $-\ln^M(Qp\cdot p-\eta)\to -\ln^M(Qp\cdot p-\eta)$ uniformly. This then implies that 
\begin{equation}
\begin{split}
&\liminf\limits_{j \to \infty}-\int_{\mathbb{S}^2}f_j(p)\ln(Q_jp\cdot p-\eta)\,dp\\
\geq & \liminf\limits_{j \to \infty}-\int_{\mathbb{S}^2}f_j(p)\ln^M(Q_jp\cdot p-\eta)\,dp\\
=&\int_{\mathbb{S}^2}-f(p)\ln^M(Qp\cdot p-\eta)\,dp 
\end{split}
\end{equation}
We then apply the monotone convergence theorem by taking $M\to +\infty$ in the final integral to give 
\begin{equation}
\liminf\limits_{j \to \infty}-\int_{\mathbb{S}^2}f_j(p)\ln(Q_jp\cdot p-\eta)\,dp\geq \int_{\mathbb{S}^2}-f(p)\ln(Qp\cdot p-\eta)\,dp .
\end{equation}
\end{proof}

\begin{proposition}\label{theoremExistence}
There exists a minimiser of $\mathcal{F}$ if and only if $\eta<\frac{2}{3}$. 
\end{proposition}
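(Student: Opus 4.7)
The plan is to handle the two directions separately and use the direct method of the calculus of variations for the non-trivial one. The only-if direction follows immediately from \Cref{lemmaNonEmptyDomain}: when $\eta \geq \frac{2}{3}$ every $f \in \ps2$ has infinite energy, so no minimiser exists. For $\eta < \frac{2}{3}$, that same lemma produces a finite-energy configuration, so the infimum $m := \inf_{f \in \ps2} \mf(f)$ is at most finite, and the task reduces to extracting an admissible weak $L^1$ cluster point of a minimising sequence; the lower semicontinuity already established in \Cref{lemmaLowerSemiContinuous} then completes the argument.

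The starting point for both the lower bound and the compactness step is the observation that any $Q \in \mathcal{Q}$ satisfies $Qp\cdot p \leq v_{\max}(Q) \leq \frac{2}{3}$ uniformly in $p \in \s2$, and hence
\begin{equation*}
-\int_{\s2} f(p) \ln(Qp\cdot p - \eta)\,dp \geq -\ln\!\left(\tfrac{2}{3} - \eta\right)
\end{equation*}
for every $f \in \ps2$ of finite energy. Combined with the entropy inequality $\int_{\s2} f \ln f\,dp \geq -\ln(4\pi)$, which follows from Jensen against the normalised uniform measure on $\s2$, this gives $\mf(f) \geq -\ln(4\pi) - \ln(\tfrac{2}{3} - \eta)$, so in particular $m > -\infty$. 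Given a minimising sequence $\{f_j\}$, I would rearrange the definition of $\mf(f_j)$ and insert the same logarithmic upper bound to obtain
\begin{equation*}
\int_{\s2} f_j \ln f_j\,dp \leq \mf(f_j) + \ln\!\left(\tfrac{2}{3} - \eta\right),
\end{equation*}
which is uniformly bounded in $j$. Superlinearity of $x \mapsto x\ln x$ at infinity and the de la Vall\'ee-Poussin criterion then furnish equi-integrability of $\{f_j\}$, so the Dunford-Pettis theorem supplies a (non-relabelled) subsequence with $f_j \rightharpoonup f$ in $L^1(\s2)$.

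Finally, the constraints $f \geq 0$ a.e.\ and $\int_{\s2} f\,dp = 1$ both define weakly closed convex subsets of $L^1(\s2)$ and so pass to the limit, placing $f \in \ps2$. \Cref{lemmaLowerSemiContinuous} then gives $\mf(f) \leq \liminf_j \mf(f_j) = m$, which forces equality and identifies $f$ as a minimiser. I expect the main obstacle to be securing equi-integrability of a minimising sequence, which requires an upper bound on its entropy; this is exactly what the uniform eigenvalue bound $Qp\cdot p \leq \tfrac{2}{3}$ on $\mathcal{Q}$ provides, and it is this elementary geometric fact that carries the compactness argument through.
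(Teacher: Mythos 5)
Your proof is correct and follows essentially the same direct-method strategy as the paper: use the uniform upper bound $Qp\cdot p \leq \frac{2}{3}$ to bound the logarithmic term and thus the entropy of a minimising sequence, extract a weak $L^1$ limit, and conclude via Lemma~\ref{lemmaLowerSemiContinuous}. You have merely filled in the compactness step (de la Vallée-Poussin/Dunford-Pettis) and the closedness of the constraint set, both of which the paper leaves implicit.
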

\begin{proof}
The proof will follow a standard direct method argument (e.g. \cite{dacorogna2007direct}). The eigenvalue constraint on $Q$ gives that $Qp\cdot p-\eta$ is bounded from above, so that $-\ln (Qp\cdot p-\eta)$ is bounded from below. Similarly, the Shannon entropy is bounded from below, so we have a minimising sequence $(f_j)_{j\in\mathbb{N}}$ provided $\text{dom}(F)\neq \emptyset$, which is precisely when $\eta <\frac{2}{3}$ from  \Cref{lemmaNonEmptyDomain}. Since $\mathcal{F}(f_j)$ is bounded, this implies that $\int_{\mathbb{S}^2}f_j(p)\ln f_j(p)\,dp$ is bounded, so there exists a subsequence (not relabelled) which converges weakly in $L^1$ to some $f^* \in \ps2$. Finally, $\mathcal{F}$ is $L^1(\s2)$ weakly lower semicontinuous by  \Cref{lemmaLowerSemiContinuous}, completing the proof.
\end{proof}

One might hope to find minimisers by taking smooth variations, although the first immediate issue is that if $f \in \mathcal{P}(\mathbb{S}^2)$ is not bounded away from zero, it cannot be guaranteed that an arbitrary variation of the form $f+\epsilon\varphi$ is in the domain of $\mathcal{F}$. Let $\mathcal{P}_+(\mathbb{S}^2)=\left\{ f \in \mathcal{P}(\mathbb{S}^2): \exists M>0, \, \frac{1}{M}\leq f(p) \leq M \text{ a.e.}\right\}$ denote the set of probability distributions bounded away from zero and infinity. Note that if $f \in \mathcal{P}_+(\mathbb{S}^2) \cap \text{dom}(\mf)$, then $v_{\min}(Q)>\eta$, and in particular if $\text{dom}(\mathcal{F})\cap\mathcal{P}_+(\mathbb{S}^2)\neq \emptyset$, then $\eta<0$. The converse also holds, since the isotropic state $f_U(p)=\frac{1}{4\pi}$ satisfies $\mathcal{F}(f_U)<+\infty$ if and only if $\eta<0$. This set is significant because it contains all of the probability distributions where arbitrary variations of the form $f+\epsilon\varphi$ can be taken, with $\varphi \in L^\infty(\mathbb{S}^2)$, $\int_{\mathbb{S}^2}\varphi(p)\,dp=0$. In particular, with respect to the $L^\infty$ topology, $\mathcal{P}_+(\mathbb{S}^2)\cap \text{dom}(\mf)$ is open in $\mathcal{P}(\mathbb{S}^2)$.

\begin{proposition}\label{propIsotropicGeneral}
Let $\eta <0$. The first and second variations of $\mathcal{F}$ about $f \in \mathcal{P}_+ \cap \text{dom}(J)$ are given by 
\begin{equation}
\begin{split}
\delta\mathcal{F}(f)[\phi]=&\int_{\mathbb{S}^2}\phi\ln(f)+\phi -\phi\ln\left(Qp\cdot p-\eta\right)-\frac{fAp\cdot p}{Qp\cdot p-\eta}\,dp,\\
\delta^2\mathcal{F}(f)[\phi,\phi]=&\int_{\mathbb{S}^2}\frac{1}{f}\left(\phi-\frac{fAp\cdot p}{Qp\cdot p-\eta}\right)^2\,dp,
\end{split}
\end{equation}
where $\phi \in L^\infty(\s2)$, $A=\int_{\mathbb{S}^2}\left(p\otimes p-\frac{1}{3}I\right)\phi(p)\,dp\in\sm3$ and $\int_{\s2}\phi(p)\,dp=0$.

In particular, $\mathcal{F}$ is convex when restricted to $\mathcal{P}_+(\mathbb{S}^2)\cap \text{dom}(J)$. Furthermore, the first variation of $\mathcal{F}$ vanishes at $f_U=\frac{1}{4\pi}$, and the second variation at $f_U$ is strictly positive if $\eta \neq -\frac{2}{15}$
\end{proposition}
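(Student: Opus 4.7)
The plan is to compute the first and second variations by direct differentiation of $\epsilon \mapsto \mathcal{F}(f+\epsilon\phi)$ at $\epsilon=0$, then specialise to the isotropic state. Admissibility of perturbations $\phi \in L^\infty(\mathbb{S}^2)$ with $\int \phi\,dp = 0$ will follow from the uniform bounds $1/M \leq f \leq M$ and $Qp\cdot p - \eta \geq v_{\min}(Q) - \eta > 0$ available on $\mathcal{P}_+(\mathbb{S}^2) \cap \text{dom}(\mathcal{F})$: for $\epsilon$ sufficiently small, $f+\epsilon\phi$ remains strictly positive and the perturbed Q-tensor $Q + \epsilon A$, with $A = \int (p\otimes p - I/3)\phi\,dp \in \sm3$ (symmetric and traceless, since $\text{tr}(p\otimes p - I/3)=0$), still satisfies $v_{\min} > \eta$. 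The same bounds justify differentiation under the integral sign. The first variation then drops out of the chain rule applied to both the entropy term and the interaction term $-f\ln(Qp\cdot p-\eta)$, being careful to pick up the indirect dependence of $Q$ on $f$ as the $-\frac{fAp\cdot p}{Qp\cdot p -\eta}$ contribution.

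Differentiating once more and collecting terms gives
\[\delta^2\mathcal{F}(f)[\phi,\phi] = \int_{\mathbb{S}^2} \frac{\phi^2}{f} - \frac{2\phi\,Ap\cdot p}{Qp\cdot p - \eta} + \frac{f(Ap\cdot p)^2}{(Qp\cdot p - \eta)^2}\,dp,\]
which is immediately recognisable as $\int (1/f)\bigl(\phi - fAp\cdot p/(Qp\cdot p-\eta)\bigr)^2\,dp$. Its manifest non-negativity then yields convexity of $\mathcal{F}$ on $\mathcal{P}_+(\mathbb{S}^2) \cap \text{dom}(\mathcal{F})$.

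At $f_U = 1/(4\pi)$, symmetry gives $Q_U = 0$. In the first variation the terms $\phi\ln(1/(4\pi))$, $\phi$ and $-\phi\ln(-\eta)$ are constant multiples of $\phi$ and hence integrate to zero, while the remaining term $(4\pi\eta)^{-1}\int_{\mathbb{S}^2} Ap\cdot p\,dp = (3\eta)^{-1}\operatorname{tr}(A) = 0$ by tracelessness of $A$; so $f_U$ is critical. For the second variation, the perfect-square integrand vanishes almost everywhere precisely when $\phi(p) = -Ap\cdot p/(4\pi\eta)$. Substituting this expression back into the definition of $A$ and invoking the fourth-moment identity $\int_{\mathbb{S}^2} p_i p_j p_k p_l\,dp = \tfrac{4\pi}{15}(\delta_{ij}\delta_{kl} + \delta_{ik}\delta_{jl} + \delta_{il}\delta_{jk})$, together with the symmetry and tracelessness of $A$, produces the self-consistency condition $A = -\tfrac{2}{15\eta}A$. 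For $\eta \neq -2/15$ this forces $A = 0$ and hence $\phi = 0$ almost everywhere, so $\delta^2\mathcal{F}(f_U)[\phi,\phi] > 0$ on every nontrivial admissible perturbation. The main obstacle is this last fourth-moment computation, since it is precisely what isolates the critical value $\eta_c = -2/15$; everything else reduces to routine differentiation under the integral sign and traceless-symmetric algebra in $\sm3$.
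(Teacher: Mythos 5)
Your proposal is correct and follows essentially the same architecture as the paper's argument: compute the first and second variations by direct differentiation of $\epsilon\mapsto\mathcal{F}(f+\epsilon\phi)$, identify the second variation as a perfect square $\int f^{-1}(\phi - fAp\cdot p/(Qp\cdot p-\eta))^2\,dp$, specialise to $f_U$ where $Q=0$, and then characterise degeneracy. The one place you diverge is the final step isolating $\eta_c=-\tfrac{2}{15}$: the paper dispatches it by citing Fatkullin and Slastikov, who identify the linear operator $\phi\mapsto\int((p\cdot q)^2-\tfrac{1}{3})\phi(q)\,dq$ as a scalar multiple of the projection onto second-order spherical harmonics, so the vanishing condition becomes an eigenvalue problem solvable only at $\eta=-\tfrac{2}{15}$. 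You instead substitute the necessary form $\phi=-Ap\cdot p/(4\pi\eta)$ back into the defining relation for $A$ and carry out the fourth-moment integral $\int p_ip_jp_kp_l\,dp=\tfrac{4\pi}{15}(\delta_{ij}\delta_{kl}+\delta_{ik}\delta_{jl}+\delta_{il}\delta_{jk})$ directly, arriving at the self-consistency $A=-\tfrac{2}{15\eta}A$. The two are mathematically equivalent — your computation is exactly the content of the paper's appendix \Cref{propFourthOrder} — but yours is more self-contained and avoids the citation. (As a sanity check, note that your eigenvalue $+\tfrac{8\pi}{15}$ is the correct sign; the paper's prose says "$-\tfrac{8\pi}{15}$" when quoting Fatkullin--Slastikov but then solves $-4\pi\eta=\tfrac{8\pi}{15}$ in the next line, so your direct calculation actually resolves a small sign inconsistency in the paper's exposition.)
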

\begin{proof}
Let $\phi \in L^\infty(\s2)$, with $\int_{\mathbb{S}^2}\phi=0$. Denote $A=\int_{\mathbb{S}^2}\phi(p)\left(p\otimes p-\frac{1}{3}I\right)\,dp$. For readability the $p$ dependence of $f,\phi$ will be implicit. The variations are readily calculated as
\begin{equation}
\begin{split}
\mathcal{F}(f+\epsilon \phi) = & \int_{\mathbb{S}^2} (f+\epsilon \phi)\ln\left(\frac{(f+\epsilon \phi)}{(Q+\epsilon A)p\cdot p-\eta}\right)\,dp,\\
\frac{d}{d\epsilon}\mathcal{F}(f+\epsilon \phi)=& \int_{\mathbb{S}^2}\phi\ln(f+\epsilon \phi)+\phi -\phi\ln\left((Q+\epsilon A)p\cdot p-\eta\right)\\
&-\frac{(f+\epsilon\phi)Ap\cdot p}{(Q+\epsilon A)p\cdot p-\eta}\,dp,\\
\frac{d^2}{d\epsilon^2}\mathcal{F}(f+\epsilon\phi)=& \int_{\mathbb{S}^2}\frac{\phi^2}{f+\epsilon\phi}-2\frac{\phi Ap\cdot p}{(Q+\epsilon A)\cdot p-\eta} \\
&+\frac{(f+\epsilon\phi)(Ap\cdot p)^2}{((Q+\epsilon A)p\cdot p-\eta)^2}\,dp,\\
\left.\frac{d}{d\epsilon}\mathcal{F}(f+\epsilon\phi)\right|_{\epsilon=0}=& \int_{\mathbb{S}^2}\phi\ln(f)+\phi -\phi\ln\left(Qp\cdot p-\eta\right)\\
&-\frac{fAp\cdot p}{Qp\cdot p-\eta}\,dp,\\
\left.\frac{d^2}{d\epsilon^2}\mathcal{F}(f+\epsilon \phi)\right|_{\epsilon=0}=& \int_{\mathbb{S}^2} \frac{\phi^2}{f}-\frac{2\phi Ap\cdot p}{Qp\cdot p-\eta}+\frac{f(Ap\cdot p)^2}{(Qp\cdot p-\eta)^2}\,dp\\
=&\int_{\mathbb{S}^2}\frac{1}{f}\left(\phi-\frac{fAp\cdot p}{Qp\cdot p-\eta}\right)^2\,dp.
\end{split}
\end{equation}
The convexity of $\mathcal{F}$ on the restricted set then follows since $\mathcal{P}_+(\mathbb{S}^2)$ is convex and open in $\text{dom}(\mf)$ with respect to the strong $L^\infty(\mathbb{S}^2)$ topology, with positive second variation on its domain. Taking $f(p)=\frac{1}{4\pi}$ gives $Q=0$, and the first variation is 
\begin{equation}\int_{\mathbb{S}^2}\phi(p)\left(1-\ln(4\pi)-\ln(-\eta)\right)\,dp+\frac{1}{3\eta}I\cdot A=0.\end{equation}
The second variation at $f=f_U$ is then given as 
\begin{equation}4\pi\int_{\mathbb{S}^2}\left(\phi(p)+\frac{1}{4\pi\eta}Ap\cdot p\right)^2\,dp.\end{equation}
This is strictly positive unless 
\begin{equation}(-4\pi\eta)\phi(p)=Ap\cdot p=\int_{\mathbb{S}^2}\left((p\cdot q)^2-\frac{1}{3}\right)\phi(q)\,dq\end{equation}
almost everywhere. This is eigenvalue problem is implicitly solved in \cite{fatkullin2005critical}, since they establish that the linear operator on the right is, when defined for $L^2$ complex valued functions, $-\frac{8\pi}{15}$ multiplied by the projection onto the set of spherical harmonics of order two. Hence the eigenvalue problem has only the trivial solution $-4\pi\eta=0$, at which point $f_U$ is inadmissible, and $-4\pi\eta=\frac{8\pi}{15}\Rightarrow \eta=-\frac{2}{15}$. Therefore the second variation of $\mathcal{F}$ at $f_U=\frac{1}{4\pi}$ is strictly positive unless $\eta=-\frac{2}{15}$
\end{proof}

While for $\eta<0$, $\mathcal{F}$ is convex on a subset of its domain, we now show that $\mathcal{F}$ is generally not convex. This global result will later be strengthened to a result demonstrating a lack of local convexity at non-trivial local minimisers in \Cref{propNotLocallyConvex}.

\begin{proposition}\label{propositionNonConvexDomain}
Let $0\leq \eta <\frac{2}{3}$. Then the effective domain of $\mathcal{F}$ is not convex and in particular $\mathcal{F}$ itself is not convex.
\end{proposition}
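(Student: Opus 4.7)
The plan is to construct two densities $f_1, f_2 \in \text{dom}(\mathcal{F})$ whose midpoint $g = \frac{1}{2}(f_1 + f_2)$ satisfies $\mathcal{F}(g) = +\infty$; this immediately yields non-convexity of the effective domain, and hence of $\mathcal{F}$ itself since then $\mathcal{F}(g) > \frac{1}{2}(\mathcal{F}(f_1) + \mathcal{F}(f_2))$. The underlying mechanism is that membership in $\text{dom}(\mathcal{F})$ requires the inclusion $\supp(f) \subseteq \{p \in \s2 : Qp\cdot p > \eta\}$, a condition coupling the linear $Q$-tensor of $f$ with its set-valued support; averaging two densities can pull the mixture's $Q$-tensor far enough from either of $Q_1, Q_2$ that the inclusion fails on the combined support.

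Concretely, I would take orthogonal $e_1, e_2 \in \s2$ and, for a parameter $\delta \in (0,1)$ to be calibrated to $\eta$, set $f_i = \frac{1}{2\pi\delta}\chi_{\{p\cdot e_i > 1-\delta\}}$. The computation in \Cref{lemmaNonEmptyDomain} gives $Q_i = c(\delta)(e_i \otimes e_i - \frac{1}{3}I)$ with $c(\delta) = 1 - \frac{3\delta}{2} + \frac{\delta^2}{2}$, hence $Q_g = \frac{c(\delta)}{2}(e_1 \otimes e_1 + e_2 \otimes e_2 - \frac{2}{3}I)$. Minimising $Q_i p \cdot p$ over the cap $\{p\cdot e_i \geq 1-\delta\}$ shows that $f_i \in \text{dom}(\mathcal{F})$ is equivalent to $\alpha(\delta) := c(\delta)\left((1-\delta)^2 - \frac{1}{3}\right) - \eta > 0$. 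Minimising $Q_g p \cdot p$ over the same cap, attained at $p^{*} = (1-\delta) e_1 + \sqrt{1-(1-\delta)^2}\,e_3$ with $e_3 \perp e_1, e_2$, yields the value $c(\delta)\left(\frac{(1-\delta)^2}{2} - \frac{1}{3}\right)$, so that if $\beta(\delta) := \eta - c(\delta)\left(\frac{(1-\delta)^2}{2} - \frac{1}{3}\right) > 0$, continuity of $p \mapsto Q_g p \cdot p$ gives a positive-measure subset of $\supp(g)$ on which $Q_g p \cdot p < \eta$, forcing $\mathcal{F}(g) = +\infty$.

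The remaining step is to verify, for every $\eta \in [0, \frac{2}{3})$, the existence of $\delta \in (0,1)$ with both $\alpha(\delta), \beta(\delta) > 0$. The key identity is $\alpha(\delta) + \beta(\delta) = \frac{c(\delta)(1-\delta)^2}{2}$, which is strictly positive on $[0, 1)$. Since $\alpha$ is continuous with $\alpha(0) = \frac{2}{3} - \eta > 0$ and $\alpha(1 - 1/\sqrt{3}) = -\eta \leq 0$, the intermediate value theorem produces a smallest zero $\delta_\alpha \in (0, 1 - 1/\sqrt{3}]$; the identity then forces $\beta(\delta_\alpha) = \frac{c(\delta_\alpha)(1-\delta_\alpha)^2}{2} > 0$, and by continuity both $\alpha$ and $\beta$ are strictly positive for $\delta$ slightly less than $\delta_\alpha$.

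The main obstacle is handling the full range $\eta \in [0, \frac{2}{3})$ uniformly. At $\delta = 0$ one has $\alpha(0), \beta(0) > 0$ only when $\eta \in \left(\frac{1}{6}, \frac{2}{3}\right)$, so for $\eta \leq \frac{1}{6}$ the cap must be taken strictly wider. Simpler asymmetric alternatives (for instance a thin cap around $e_1$ paired with an oblate distribution concentrated on a thin annulus perpendicular to $e_1$) work only for $\eta \in [0, \frac{1}{6})$, because the physicality constraint $v_{\min}(Q) > -\frac{1}{3}$ bounds the admissible negative uniaxial order at $-\frac{1}{2}$ and hence bounds $\max_p Q_2 p \cdot p$ above by $\frac{1}{6}$. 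The identity $\alpha + \beta = c(\delta)(1-\delta)^2/2$, combined with the intermediate value argument, sidesteps this case analysis by producing a single orthogonal-cap construction calibrated by $\delta$ that covers the full admissible range.
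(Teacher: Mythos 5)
Your proof is correct, but it takes a genuinely different route from the paper's. The paper's argument is a short symmetry trick: start with \emph{any} $f_1 \in \mathrm{dom}(\mathcal{F})$, form $f_2, f_3$ by rotating $f_1$ so that their $Q$-tensors are obtained from $Q_1$ by cyclically permuting eigenvalues, and observe that $\tfrac13(Q_1+Q_2+Q_3)=0$ automatically because $\mathrm{tr}\,Q_1=0$; since $\eta\geq 0$ forces $\mathcal{F}(f)=+\infty$ whenever the $Q$-tensor of $f$ is zero, the barycentre $\tfrac13(f_1+f_2+f_3)$ has infinite energy and convexity of the domain fails. No calibration of parameters is needed, and the same argument works verbatim for any $f_1$. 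Your construction instead exhibits two explicit cap distributions whose \emph{midpoint} leaves the domain, which requires the calibration step: the midpoint's $Q$-tensor $Q_g=\tfrac{c(\delta)}{2}\bigl(e_1\otimes e_1+e_2\otimes e_2-\tfrac23 I\bigr)$ is nonzero, so you cannot conclude infinite energy from $\eta\geq 0$ alone and must instead compare the support of $g$ with $E_{Q_g}$. The identity $\alpha(\delta)+\beta(\delta)=\tfrac{c(\delta)(1-\delta)^2}{2}$ together with the intermediate value argument correctly finds the right $\delta$ for every $\eta\in[0,\tfrac23)$, and the continuity argument near $p^*$ (which lies on the boundary of the cap) does produce a positive-measure set where $g>0$ and $Q_g p\cdot p<\eta$. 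One minor imprecision: you claim $f_i\in\mathrm{dom}(\mathcal{F})$ is \emph{equivalent} to $\alpha(\delta)>0$, but $\alpha(\delta)=0$ would still give finite energy since the resulting logarithmic singularity at $\partial E_{Q_i}$ is integrable against the bounded $f_i$; you only need (and only use) the sufficiency direction, so this does not affect the proof. In short, the paper's argument is cleaner and parameter-free, while yours is more explicit but requires the two-parameter bookkeeping precisely because a two-element mixture cannot exploit the trace-zero cancellation that three rotated copies give for free.
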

\begin{proof}
Let $f_1 \in \mathcal{P}(\mathbb{S}^2)\cap\text{dom}(\mathcal{F})\neq \emptyset$. Without loss of generality take its Q-tensor, $Q_1$, to be diagonal with eigenvalues $q_1,q_2,q_3$ corresponding to basis vectors $e_1,e_2,e_3$ respectively. Let $f_2,f_3$ be defined by rotations acting on $f_1$, so that their corresponding Q-tensors $Q_2$ and $Q_3$ have the same eigenbasis but with permuted eigenvectors. Explicitly, $Q_2e_i=q_{i+1}e_i$ and $Q_3e_i=q_{i+2}e_i$, with indices taken modulo 3. In this case, $\frac{1}{3}(Q_1+Q_2+Q_3)e_i = \frac{1}{3}(q_1+q_2+q_3)=0$. If the domain of $\mathcal{F}$ were convex, then $\frac{1}{3}(f_1+f_2+f_3)$ must have finite energy. However, since the Q-tensor of $\frac{1}{3}(f_1+f_2+f_3)$ is zero and $\eta\geq 0$, this implies that $\mathcal{F}\left(\frac{1}{3}(f_1+f_2+f_3)\right)=+\infty$, giving a contradiction.
\end{proof}

\begin{proposition}\label{propositionIsotropicStateGlobalMin}
For all $\eta<0$, the isotropic state is a global minimiser on $\mathcal{P}_+(\mathbb{S}^2)\cap \text{dom}(\mathcal{F})$, and the unique global minimiser on this set if $\eta\neq -\frac{2}{15}$.
\end{proposition}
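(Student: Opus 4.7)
The key ingredients are already assembled in \Cref{propIsotropicGeneral}: the functional $\mathcal{F}$ is convex on the convex set $\mathcal{P}_+(\s2)\cap\text{dom}(\mathcal{F})$, and the first variation of $\mathcal{F}$ at $f_U$ vanishes for every admissible direction $\phi \in L^\infty(\s2)$ with $\int_{\s2}\phi=0$. My plan is to combine these two facts via a standard one-dimensional convex reduction along line segments, then upgrade this to uniqueness by using the strict positivity of the second variation at $f_U$.

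For global minimality, fix an arbitrary $g\in\mathcal{P}_+(\s2)\cap\text{dom}(\mathcal{F})$ and set $\phi=g-f_U\in L^\infty(\s2)$, which satisfies $\int_{\s2}\phi\,dp=0$. By convexity of the admissible set, the segment $f_t=(1-t)f_U+tg$ lies inside $\mathcal{P}_+(\s2)\cap\text{dom}(\mathcal{F})$ for all $t\in[0,1]$, and $h(t):=\mathcal{F}(f_t)$ is a convex function of $t$ by \Cref{propIsotropicGeneral}. Since $h'(0)=\delta\mathcal{F}(f_U)[\phi]=0$ by the same proposition, convexity yields $h(1)\geq h(0)+h'(0)=h(0)$, i.e., $\mathcal{F}(g)\geq \mathcal{F}(f_U)$. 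Hence $f_U$ is a global minimiser on $\mathcal{P}_+(\s2)\cap\text{dom}(\mathcal{F})$.

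For uniqueness when $\eta\neq -\tfrac{2}{15}$, suppose some $g\in\mathcal{P}_+(\s2)\cap\text{dom}(\mathcal{F})$ distinct from $f_U$ also attains the minimum value. With $h$ defined as above, $h$ is convex on $[0,1]$ with $h(0)=h(1)$ both equal to the minimum, so $h$ is constant on $[0,1]$. In particular $h''(0)=0$. But $h''(0)=\delta^2\mathcal{F}(f_U)[\phi,\phi]$, which by \Cref{propIsotropicGeneral} is strictly positive whenever $\phi\not\equiv 0$ and $\eta\neq -\tfrac{2}{15}$. Since $g\neq f_U$ forces $\phi\not\equiv 0$, this is a contradiction, and so $g=f_U$.

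The only place where care is required is in justifying that the derivatives of $h$ at $t=0$ really coincide with the first and second variations of \Cref{propIsotropicGeneral} evaluated at $\phi=g-f_U$. This is not a genuine obstacle: since $f_U$ is bounded away from zero and both endpoints lie in $\text{dom}(\mathcal{F})$, one has $f_t$ uniformly bounded below and $Q_t p \cdot p - \eta$ uniformly bounded below by a positive constant on the segment, so the integrands appearing in the variation computation are smooth in $t$ near $t=0$ and differentiation under the integral is valid by dominated convergence. Once this is noted, the convexity argument above closes the proof.
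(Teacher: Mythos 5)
Your proposal is correct and takes essentially the same route as the paper: the paper likewise concludes global minimality from convexity on $\mathcal{P}_+(\s2)\cap\text{dom}(\mathcal{F})$ together with the vanishing first variation at $f_U$, and deduces uniqueness for $\eta\neq -\tfrac{2}{15}$ from the strictly positive second variation (the paper phrases this as ``strict local minimiser, hence unique global minimiser,'' whereas you run the contradiction through $h''(0)=0$, but these are the same argument). Your remark about uniform lower bounds on $f_t$ and $Q_t p\cdot p - \eta$ along the segment, justifying differentiation under the integral, is a sound and welcome piece of extra care that the paper leaves implicit.
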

\begin{proof}
Since $\mathcal{F}$ is convex on the restricted set, which is open in $L^\infty$, and the first variation vanishes, $f_U$ must be a global minimiser. Furthermore, since the second variation is strictly positive for $\eta\neq -\frac{2}{15}$, this implies it is a strict local minimiser, and therefore a unique global minimiser on the restricted set.
\end{proof}

\begin{corollary}\label{corollaryOnlyIsotropicVariation}
Unless $\eta=-\frac{2}{15}$, the only $L^\infty$ local minimiser that can be found by solving 
\begin{equation}\left.\frac{d}{dt}\mathcal{F}(f+t\phi)\right|_{t=0}=0\end{equation}
for all $\phi \in L^\infty(\mathbb{S}^2)$ with $\int_{\mathbb{S}^2}\phi(p)\,dp=0$ is the isotropic state, and only when $\eta<0$. 
\end{corollary}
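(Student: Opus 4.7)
The plan is to assemble \Cref{propIsotropicGeneral} and \Cref{propositionIsotropicStateGlobalMin} into a short bookkeeping argument, since both the convexity and the uniqueness of the relevant minimiser have already been established. First I would note that the vanishing first variation criterion $\left.\frac{d}{dt}\mathcal{F}(f+t\phi)\right|_{t=0}=0$ for all $\phi\in L^\infty(\s2)$ with $\int_{\s2}\phi\,dp=0$ only makes sense when the path $f+t\phi$ lies in $\text{dom}(\mathcal{F})\cap\mathcal{P}(\s2)$ for all sufficiently small $t$, and this in turn forces $f\in \mathcal{P}_+(\s2)\cap\text{dom}(\mathcal{F})$. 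The discussion immediately preceding \Cref{propIsotropicGeneral} shows that this latter set is non-empty if and only if $\eta<0$, which already disposes of the regime $\eta\geq 0$: no admissible $f$ exists there at all, so there is nothing to check.

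For $\eta<0$ with $\eta\neq -\frac{2}{15}$, the argument then proceeds by convexity. By \Cref{propIsotropicGeneral}, $\mathcal{F}$ is convex on the convex set $\mathcal{P}_+(\s2)\cap\text{dom}(\mathcal{F})$, which is open in $\text{dom}(\mathcal{F})$ with respect to the $L^\infty$ topology. A standard fact about convex functionals on open convex sets then implies that any $f$ in this set whose first variation vanishes in every admissible direction is automatically a global minimiser of $\mathcal{F}$ restricted to that set. By \Cref{propositionIsotropicStateGlobalMin}, when $\eta\neq -\frac{2}{15}$ this global minimiser is unique, namely the isotropic state $f_U=\frac{1}{4\pi}$, and \Cref{propIsotropicGeneral} already verifies that $f_U$ itself satisfies the vanishing first variation. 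This identifies $f_U$ as the only candidate and completes the argument.

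There is essentially no genuine obstacle, because everything nontrivial has been absorbed into the two preceding propositions. The one subtlety worth flagging is the exclusion $\eta\neq -\frac{2}{15}$: at this critical value the second variation at $f_U$ degenerates along the order-two spherical harmonics, so strict convexity fails and additional critical points may appear, which is exactly why the corollary must exclude this value.
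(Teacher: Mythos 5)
Your proposal is correct and follows exactly the route the paper intends (the corollary is stated without an explicit proof precisely because it is an immediate assembly of \Cref{propIsotropicGeneral}, \Cref{propositionIsotropicStateGlobalMin}, and the observation preceding \Cref{propIsotropicGeneral} that $\mathcal{P}_+(\s2)\cap\text{dom}(\mathcal{F})$ is nonempty iff $\eta<0$). The only minor point worth noting is that the assertion that admissibility of $f+t\phi$ for all such $\phi$ forces $f\in\mathcal{P}_+(\s2)$ is itself taken on trust from the paper's own remark rather than re-derived; since the paper treats it the same way, this does not constitute a gap.
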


\begin{remark}
In \Cref{corollaryOnlyIsotropicVariation} we have seen that support conditions raise difficulties in finding solutions by taking variations and solving
\begin{equation}
\left.\frac{d}{d\epsilon}\mathcal{F}(f^*+\epsilon \phi)\right|_{\epsilon=0}=0.
\end{equation}
Furthermore, \Cref{propositionNonConvexDomain} also states that attempting variational inequalities and finding solutions by considering 
\begin{equation}
\left.\frac{d}{d\epsilon}\mathcal{F}((1-\epsilon)f^*+\epsilon f)\right|_{\epsilon=0}\geq 0
\end{equation}
is not generally possible either. Together, these results imply that non-standard techniques will be needed for finding local minimisers.
\end{remark}

\begin{proposition}\label{propEta215Drama}
Let $\eta=-\frac{2}{15}$, and $V$ denote subspace of real valued functions in the span of the second order spherical harmonics. Then if $f=\frac{1+u}{4\pi}$ for $u \in V$, $\inf\limits_{p\in\mathbb{S}^2}u(p)>-1$, we have $\mathcal{F}(f)=\mathcal{F}(f_u)=\ln\left(\frac{15}{8\pi}\right)$. Furthermore, all such $f$ have vanishing first variation. 
\end{proposition}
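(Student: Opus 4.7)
The key observation to exploit is that the space $V$ of real second-order spherical harmonics admits the explicit parameterisation $u(p) = Ap\cdot p$ with $A \in \sm3$, and that the linear operator $\phi \mapsto \int_{\s2}\left((p\cdot q)^2 - \tfrac{1}{3}\right)\phi(q)\,dq$ acts as multiplication by $\tfrac{2}{15}$ on $V$ (a fact already invoked in the proof of \Cref{propIsotropicGeneral} via the Fatkullin–Slastikov spectral computation). The plan is therefore to reduce everything to an algebraic identity relating $f$, $Qp\cdot p -\eta$, and the constant $\tfrac{2}{15}$.

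First I would compute the $Q$-tensor of $f = (1+u)/(4\pi)$ when $u(p) = Ap\cdot p \in V$. Because the isotropic part of $f$ contributes nothing to $Q$, the calculation reduces to evaluating $\int_{\s2}(p\otimes p -\tfrac{1}{3}I)\tfrac{Ap\cdot p}{4\pi}\,dp$. Contracting with an arbitrary $p$ and using the spectral fact above, one obtains the clean identity
\begin{equation}
Qp\cdot p \;=\; \tfrac{2}{15}\,u(p).
\end{equation}
Combined with $\eta=-\tfrac{2}{15}$, this yields the crucial proportionality
\begin{equation}
Qp\cdot p - \eta \;=\; \tfrac{2}{15}\bigl(1+u(p)\bigr) \;=\; \tfrac{8\pi}{15}\,f(p).
\end{equation}
The hypothesis $\inf u > -1$ guarantees $f \in \mathcal{P}_+(\s2)$ and that $Qp\cdot p -\eta$ is bounded away from zero, so all quantities below are well-defined.

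Substituting into $\mf$ gives $f(p)\ln f(p) - f(p)\ln(Qp\cdot p -\eta) = f(p)\ln\!\left(\tfrac{15}{8\pi}\right)$, which is a constant multiple of $f$. Integrating and using $\int_{\s2}f\,dp=1$ immediately yields $\mf(f)=\ln\!\left(\tfrac{15}{8\pi}\right)$, as claimed. For the first variation, since $f\in\mathcal{P}_+(\s2)\cap\text{dom}(\mf)$ the formula of \Cref{propIsotropicGeneral} applies to any $\phi\in L^\infty(\s2)$ with $\int_{\s2}\phi=0$. Plugging in the identity $\ln f - \ln(Qp\cdot p -\eta) = \ln(15/(8\pi))$ constant, and $f/(Qp\cdot p-\eta) = 15/(8\pi)$ constant, the first variation collapses to
\begin{equation}
\delta\mf(f)[\phi] \;=\; \bigl(1+\ln(\tfrac{15}{8\pi})\bigr)\int_{\s2}\phi\,dp \;-\; \tfrac{15}{8\pi}\int_{\s2} \tilde A p\cdot p\,dp,
\end{equation}
where $\tilde A = \int_{\s2}\phi(p)(p\otimes p-\tfrac{1}{3}I)\,dp$. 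The first term vanishes by the constraint on $\phi$, and the second vanishes because $\int_{\s2}\tilde A p\cdot p\,dp = \tfrac{4\pi}{3}\operatorname{tr}(\tilde A) = 0$ since $\tilde A\in\sm3$.

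There is no real obstacle here: the whole proposition is a direct consequence of the spectral coincidence that at $\eta = -\tfrac{2}{15}$ the denominator $Qp\cdot p -\eta$ becomes proportional to $f$ itself, causing the logarithmic terms to cancel. The only care needed is to verify that $V$ is indeed the eigenspace associated to eigenvalue $\tfrac{2}{15}$ (rather than merely a subspace), but this is already established in \Cref{propIsotropicGeneral} via the Fatkullin–Slastikov result, so it may be cited directly.
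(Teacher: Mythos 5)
Your proposal is correct and follows essentially the same route as the paper: derive $Qp\cdot p = \tfrac{2}{15}u(p)$ from the spectral action of the kernel on second-order harmonics, observe that at $\eta=-\tfrac{2}{15}$ this makes $Qp\cdot p-\eta = \tfrac{8\pi}{15}f(p)$, substitute to collapse the logarithms and the first-variation integrand. The only differences are cosmetic (you make the parameterisation $u=Ap\cdot p$ explicit and compute the trace constant correctly as $\tfrac{4\pi}{3}\operatorname{tr}\tilde A$, whereas the paper writes $4\pi\operatorname{Trace}(A)$ — both vanish since the matrix is traceless, so nothing is affected).
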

\begin{proof}
Let $Q$ denote the corresponding $Q$ tensor for $f$. Then 
\begin{equation}
\begin{split}
Qp\cdot p=&\int_{\mathbb{S}^2}\left((p\cdot q)^2-\frac{1}{3}\right)\frac{1+u(q)}{4\pi}\,dq\\
=&\frac{1}{4\pi}\int_{\mathbb{S}^2}\left((p\cdot q)^2-\frac{1}{3}\right)u(q)\,dq\\
=&\frac{2}{15}u(p).
\end{split}
\end{equation}
Therefore substituting this into the energy, 
\begin{equation}
\begin{split}
\mathcal{F}(f)=& \int_{\mathbb{S}^2}f(p)\ln f(p)-f(p)\ln \left(\frac{2}{15}u(p)+\frac{2}{15}\right)\,dp\\
=& \int_{\mathbb{S}^2}f(p)\ln f(p) - f(p) \ln \left(\frac{8\pi}{15}\frac{1+u(p)}{4\pi}\right)\,dp\\
=& \int_{\mathbb{S}^2}f(p) \ln f(p) - f(p) \ln \left(\frac{8\pi}{15}f(p)\right)\,dp\\
=& \int_{\mathbb{S}^2}f(p)\ln\left( \frac{15}{8\pi}\right)\,dp=\ln\left(\frac{15}{8\pi}\right).
\end{split}
\end{equation}
Finally, using that $Qp\cdot p-\eta=\frac{8\pi}{15}f(p)$, we substitute this into the equation for the first variation in \eqref{eqFirstSecondVariation}, to give 
\begin{equation}
\begin{split}
\delta\mathcal{F}(f)[\phi]=&\int_{\mathbb{S}^2}\phi \ln (f)+\phi-\phi\ln(Qp\cdot p-\eta)-\frac{fAp\cdot p}{Qp\cdot p-\eta}\,dp\\
=&\int_{\mathbb{S}^2}\phi\left(1+ \ln \left(\frac{15}{8\pi}\right)\right)-\frac{15Ap\cdot p}{8\pi}\,dp\\
=&0,
\end{split}
\end{equation}
since $\phi$ integrates to $1$ and $\int_{\mathbb{S}^2}Ap\cdot p\,dp=4\pi \text{Trace}(A)=0$. 
\end{proof}

\begin{remark}
The numerical results of \cite{zheng2016density} suggest that these are not global minimisers of the energy at $\eta=-\frac{2}{15}$.
\end{remark}

\begin{corollary}\label{corollaryIsotropicDrama}
When $\eta=-\frac{2}{15}$, the isotropic state is no longer a strict  $L^\infty$-local minimiser, but it is an $L^\infty$- local minimiser. 
\end{corollary}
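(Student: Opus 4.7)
The claim has two parts, and both follow fairly directly by combining the propositions immediately preceding the corollary. I would treat them separately.

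For the failure of strictness, the plan is to invoke \Cref{propEta215Drama}. Fix any non-zero $u \in V$, and for $\alpha \in \mathbb{R}$ consider $f_\alpha = (1+\alpha u)/(4\pi)$. For $|\alpha|$ small enough that $\inf_{p \in \mathbb{S}^2}(\alpha u(p)) > -1$, \Cref{propEta215Drama} gives $\mathcal{F}(f_\alpha) = \ln(15/(8\pi)) = \mathcal{F}(f_U)$. Since $\|f_\alpha - f_U\|_{L^\infty(\mathbb{S}^2)} = |\alpha|\,\|u\|_{L^\infty(\mathbb{S}^2)}/(4\pi) \to 0$ as $\alpha \to 0$, the isotropic state is not a strict $L^\infty$-local minimiser.

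For the local minimisation property, the idea is to show that a sufficiently small $L^\infty$-ball around $f_U$ inside $\mathcal{P}(\mathbb{S}^2)$ lies inside $\mathcal{P}_+(\mathbb{S}^2) \cap \text{dom}(\mathcal{F})$, so that \Cref{propositionIsotropicStateGlobalMin} applies. Explicitly, if $g \in \mathcal{P}(\mathbb{S}^2)$ satisfies $\|g - f_U\|_{L^\infty(\mathbb{S}^2)} < \epsilon$ with $\epsilon < 1/(4\pi)$, then $(4\pi)^{-1} - \epsilon \le g \le (4\pi)^{-1} + \epsilon$ a.e., which places $g$ in $\mathcal{P}_+(\mathbb{S}^2)$. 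The associated Q-tensor $Q_g$ satisfies a bound of the form $|Q_g| \le C\epsilon$ (since $Q_{f_U} = 0$ and $Q$ depends continuously on $f$ in $L^\infty$), so $v_{\min}(Q_g) > \eta = -2/15$ for $\epsilon$ small, giving $Q_g p \cdot p - \eta > 0$ on $\mathbb{S}^2$ and hence $g \in \text{dom}(\mathcal{F})$. Then \Cref{propositionIsotropicStateGlobalMin} directly yields $\mathcal{F}(g) \ge \mathcal{F}(f_U)$.

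There is no serious obstacle here: both statements reduce to verifying that the $L^\infty$-ball around $f_U$ is captured by the analysis already carried out, and that \Cref{propEta215Drama} supplies a one-parameter family of equal-energy competitors within that ball. The only point requiring mild care is the uniform control of $Q_g$ by $\|g - f_U\|_{L^\infty(\mathbb{S}^2)}$, which ensures that the eigenvalue constraint $v_{\min}(Q_g) > \eta$ is preserved in a neighbourhood.
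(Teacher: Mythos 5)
Your proof is correct and follows essentially the same approach as the paper: non-strictness from the equal-energy family supplied by \Cref{propEta215Drama}, and local minimality from the global minimality of $f_U$ on $\mathcal{P}_+(\mathbb{S}^2)\cap\text{dom}(\mathcal{F})$ established in \Cref{propositionIsotropicStateGlobalMin}. You are somewhat more explicit than the paper's one-line proof, spelling out the verification that a small $L^\infty$-ball around $f_U$ sits inside $\mathcal{P}_+(\mathbb{S}^2)\cap\text{dom}(\mathcal{F})$ (a fact the paper records earlier when noting that this set is $L^\infty$-open), but the underlying argument is the same.
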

\begin{proof}
From the previous results we have that the second variation is only degenerate for variations in $V$, which have the same energy as the isotropic state.
\end{proof}

Loosely speaking, the results in this section imply that taking variations to obtain the Euler-Lagrange equation can only find trivial solutions. In particular, when $\eta \in \left[0,\frac{2}{3}\right)$, it cannot provide any results even though minimisers exist. Rather than tackle the full minimisation problem, following the spirit of \cite[Subsection 4.1]{taylor2015maximum}, the minimisation problem will instead be split into two manageable steps.

\section{The auxiliary problem and the Euler-Lagrange equation for global minimisers}
\label{secAuxiliaryProblem}
Given $Q \in \mathcal{Q}$, define 
\begin{equation}A(Q)=\left\{f \in \ps2 : \int_{\mathbb{S}^2}\left(p\otimes p-\frac{1}{3}I\right)f(p)\,dp=Q\right\}\end{equation}
to be the admissible set for $Q$. Then the minimisation problem can be split into 
\begin{equation}
\min\limits_{f \in\ps2}\mathcal{F}(f,\eta)=\min\limits_{Q \in \mathcal{Q}}\left(\min\limits_{f \in A(Q)}\mathcal{F}(f,\eta)\right).
\end{equation}
The interior minimisation problem over $\mathcal{A}(Q)$ will be referred to as the auxiliary problem. Since $\mathcal{F}$ is strictly convex on $\mathcal{A}(Q)$, this problem is much more readily tackled, drawing mainly on results from Borwein and Lewis \cite{borwein1991duality} and Taylor \cite{taylor2015maximum}. Once this simpler problem has been analysed, it remains to consider the finite-dimensional problem of minimising the macroscopic auxiliary function over admissible Q-tensors.

\begin{definition}
Define the auxiliary function $J:\mathcal{Q}\times\mathbb{R}\to\mathbb{R}\cup\{+\infty\}$ by 
\begin{equation}J(Q,\eta)=\inf\limits_{f \in A(Q)}\mathcal{F}(f,\eta).\end{equation}
For fixed $\eta$, define the set 
\begin{equation}E_Q=\{p \in\mathbb{S}^2:Qp\cdot p>\eta\}.\end{equation}
\end{definition}
Note that if $f \in \ps2$ with Q-tensor $Q$ and $\mathcal{F}(f)<+\infty$, then $\{p \in \mathbb{S}^2:f(p)>0\}\subset E_Q$ up to a set of measure zero. As before, when the dependence of $J$ on $\eta$ is unambiguous, the dependence will be suppressed so that $J(Q,\eta)=J(Q)$.

\begin{proposition}\label{propFormOfJ}
Let $Q \in \text{dom}(J)$. Then there exists a unique solution to $\min\limits_{f \in A(Q)}\mathcal{F}(f)$, given by 
\begin{equation}f_Q(p)=\frac{1}{Z}\exp(\Lambda(Q) p\cdot p)\max(Qp\cdot p-\eta,0)\end{equation}
for all $p \in \mathbb{S}^2$, where $Z>0$ is a normalising constant depending on $(Q,\eta)$, and $\Lambda(Q) \in \sm3$ maximises the dual objective function $F:\text{dom}(J)\times \sm3 \to \mathbb{R}$ given by
\begin{equation}F(Q,\lambda)=\lambda \cdot Q - \ln\left(\int_{\mathbb{S}^2}\exp(\lambda p\cdot p)\max(Qp\cdot p-\eta,0)\,dp\right).\end{equation}
In particular $J(Q)=F(Q,\Lambda(Q))=\max\limits_{\lambda \in\sm3} F(Q,\lambda).$
\end{proposition}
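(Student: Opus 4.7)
The plan is to recognise the auxiliary minimisation as a constrained relative-entropy problem and invoke the convex duality framework of Borwein and Lewis \cite{borwein1991duality}. The first step is to observe that if $f \in A(Q) \cap \text{dom}(\mathcal{F})$ then $\supp(f) \subset E_Q$ up to a null set, so the problem can be restricted to $E_Q$. On that set $Qp\cdot p - \eta > 0$, and the energy rewrites as the Kullback--Leibler divergence
\[
\mathcal{F}(f) = \int_{E_Q} f(p)\ln\frac{f(p)}{Qp\cdot p - \eta}\,dp
\]
of $f\,dp$ against the unnormalised reference measure with density $\max(Qp\cdot p - \eta, 0)$. The integrand is strictly convex in $f$, and the two moment conditions $\int f\,dp = 1$ and $\int (p\otimes p - \tfrac{1}{3}I)f\,dp = Q$ defining $A(Q)$ are linear and continuous.

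Next I would form the Lagrangian with multipliers $\lambda_0 \in \mathbb{R}$ for the normalisation and $\lambda \in \sm3$ for the Q-tensor constraint, and minimise pointwise over $f(p) \geq 0$. Using that $\lambda$ is traceless, so that $\lambda \cdot (p\otimes p - \tfrac{1}{3}I) = \lambda p \cdot p$, the first-order condition yields
\[
f(p) = \max(Qp\cdot p-\eta,0)\exp(\lambda_0 - 1 + \lambda p\cdot p),
\]
and the normalisation condition then fixes $\lambda_0$, which I absorb into $Z$. Substituting this candidate back into the Lagrangian collapses it precisely to the dual objective $F(Q,\lambda) = \lambda\cdot Q - \ln\int_{\mathbb{S}^2}\exp(\lambda p\cdot p)\max(Qp\cdot p-\eta,0)\,dp$ displayed in the statement.

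To promote this formal computation to the genuine identity $J(Q) = F(Q,\Lambda(Q)) = \max_{\lambda} F(Q,\lambda)$, I would appeal to the convex duality theorem of \cite{borwein1991duality} for moment-constrained entropy problems. The constraint functions $1$ and $p\otimes p - \tfrac{1}{3}I$ are bounded and continuous on $\mathbb{S}^2$, and the hypothesis $Q \in \text{dom}(J)$, combined with a mild perturbation of a known finite-energy $f$ as in \Cref{lemmaNonEmptyDomain}, should supply the interior-point constraint qualification required for zero duality gap and attainment of the dual by some $\Lambda(Q)\in\sm3$. Uniqueness of $f_Q$ then follows from strict convexity of $x\mapsto x\ln x$ on the affine feasible set, while existence of the primal minimiser alternatively follows from the direct method via \Cref{lemmaLowerSemiContinuous} together with the coercivity of the Shannon entropy.

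The main obstacle I anticipate is verifying the constraint qualification uniformly across $\text{dom}(J)$, in particular near boundary $Q$ where $E_Q$ may shrink or $\max(Qp\cdot p-\eta,0)$ may degenerate to high order at $\partial E_Q$, so that natural candidates for an interior feasible point become delicate to construct. A careful description of the admissible Q-tensors (the role presumably played by \Cref{propAdmissibleQ}) should give enough slack to perturb any $Q \in \text{dom}(J)$ into the relative interior of the constraint set, allowing the Borwein--Lewis machinery to apply throughout $\text{dom}(J)$ and the representation of $f_Q$ to extend to the entire effective domain.
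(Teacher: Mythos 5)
Your proposal follows essentially the same route as the paper: restrict to $E_Q$, recognise a Borwein--Lewis entropy minimisation with linear moment constraints (the paper makes the cosmetic change of variables $\tilde{f}=f/(Qp\cdot p-\eta)$ and measure $d\mu_Q$ to match their form exactly, but this is equivalent to your Kullback--Leibler reformulation), and read off the exponential minimiser and the dual objective $F$. The one technicality the paper singles out that you omit is verification of the pseudo-Haar condition for the constraint functions, which is what licenses the Borwein--Lewis representation and is checked via \cite{taylor2015maximum} since the constraints are analytic on $\mathbb{S}^2$; the Slater-type constraint-qualification concern you raise is real but is resolved, as you anticipate, by the characterisation of $\text{dom}(J)$ as an open set of achievable moments in \Cref{propAdmissibleQ}.
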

\begin{proof}
Existence follows by the same argument as \Cref{theoremExistence}, noting that $A(Q)$ is weakly closed, under the assumption that $Q\in \text{dom}(J)$, which ensures the admissible set is non-empty. Uniqueness follows from the strict convexity of $\mathcal{F}$ when restricted to $A(Q)$. Recall that $\supp(f)\subset E_Q =\{p\in\mathbb{S}^2:Qp\cdot p>\eta\}$, else the energy is infinite. The minimisation problem can then be written as
\begin{equation}
\begin{split}
\text{minimise } &\int_{E_Q}f(p)\ln f(p)-f(p)\ln (Qp\cdot p-\eta)\,dp,\\
\text{subject to }0\leq & f(p) \text{ a.e.},\\
1=&\int_{E_Q}f(p)\,dp,\\
Q=&\int_{E_Q}f(p)\left(p\otimes p-\frac{1}{3}I\right)\,dp.
\end{split}
\end{equation}
Define $\tilde{f}(p)=\frac{1}{Qp\cdot p-\eta} f(p)$ on $E_Q$. Define the measure $\mu_Q$ on $E_Q$ as $d\mu_Q(p)=(Qp\cdot p-\eta)dp$. Then the minimisation problem is equivalent to 
\begin{equation}\label{eqEasyMin}
\begin{split}
\text{minimise } & \int_{E_Q}\tilde{f}(p)\ln \tilde{f}(p)\,d\mu_Q(p),\\
\text{subject to }0\leq & \tilde{f}(p) \text{ a.e.},\\
1 =& \int_{E_Q} \tilde{f}(p)\,d\mu_Q(p),\\
Q=& \int_{E_Q} \tilde{f}(p)\left(p\otimes p-\frac{1}{3}I\right)\,d\mu_Q(p).
\end{split}
\end{equation}
This is a straightforward entropy minimisation subject to linear constraints, and \cite{borwein1991duality} can be applied. The only technicality that needs to be addressed for the results of Borwein and Lewis to be applied is that the so-called {\it pseudo-Haar} condition is satisfied by the constraint functions. By \cite{taylor2015maximum}, since the constraint functions are analytic on the sphere, and the non-null subsets of $E_Q$ with respect to $\mu_Q$ are also non-null subsets with respect to $\mathcal{H}^2$, this is not problematic. The solution is then given by 
\begin{equation}\tilde{f}(p)=\exp(\Lambda(Q) p\cdot p+\alpha^*-1)\end{equation}
on $E_Q$ for $\Lambda(Q) \in \sm3$ and $\alpha^* \in \mathbb{R}$ that maximise the dual objective function
\begin{equation}(\lambda,\alpha)\mapsto\lambda \cdot Q + \alpha - \ln \left(\int_{E_Q}\exp(\alpha -1 +\lambda p \cdot p) \mu_Q(p)\,dp\right).\end{equation}
Since the objective function is smooth and concave in $(\lambda,\alpha)$, we can eliminate $\alpha$ by setting the derivative with respect to $\alpha$ to zero, which gives 
\begin{equation}\lambda \cdot Q-\ln \left(\int_{E_Q}\exp(\lambda p\cdot p)\,d\mu_Q(p)\,dp\right),\end{equation}
which the form given in the statement. Finally, $f$ can be reclaimed from $\tilde{f}$ as 
\begin{equation}f(p)=(Qp\cdot p-\eta)\tilde{f}(p)=\frac{1}{Z}\exp(\Lambda(Q) p\cdot p)(Qp\cdot p-\eta),\end{equation}
on $E_Q$ with $Z=\exp(1-\alpha^*)$. Noting that $f$ must be zero when $Qp\cdot p\leq \eta$ provides the form in the statement. 
\end{proof}

In general the domain of $J$ will depend on $\eta$. For example, it is immediate that $J(0)=+\infty$ if $\eta \geq 0$, but $J(0)$ is finite otherwise by taking the uniform distribution $p\mapsto \frac{1}{4\pi}$. The domain can fortunately be explicitly determined. The key step is to establish that $J(Q)$ is finite if and only if $Q$ lives in a particular convex set, which admits an explicit representation in terms of supporting hyperplanes. First we include a lemma necessary for the proof.

\begin{lemma}\label{lemmaRotationDerivative}
Let $A \in \sm3$. If $p$ is a local maximum of $Ap\cdot p$, then $p$ is an eigenvector of $A$. 
\end{lemma}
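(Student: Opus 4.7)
The statement is a standard constrained-optimization fact: critical points of $p \mapsto Ap \cdot p$ on $\mathbb{S}^2$ are eigenvectors of $A$. I would prove it by computing the first-order condition along tangential variations.

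The plan is as follows. Let $p \in \mathbb{S}^2$ be a local maximum of $q \mapsto Aq \cdot q$ on $\mathbb{S}^2$, and let $v \in T_p\mathbb{S}^2$, i.e.\ $v \cdot p = 0$. Consider the smooth curve on the sphere
\begin{equation}
\gamma(t) = \frac{p + tv}{|p+tv|}, \qquad t \in (-\delta,\delta),
\end{equation}
which satisfies $\gamma(0) = p$ and $\gamma'(0) = v$. Since $p$ is a local maximum, the scalar function $t \mapsto A\gamma(t) \cdot \gamma(t)$ attains a local maximum at $t=0$, so its derivative at zero vanishes.

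Differentiating and using the symmetry of $A$ gives
\begin{equation}
0 = \left.\frac{d}{dt}\right|_{t=0} A\gamma(t)\cdot\gamma(t) = 2\, Ap \cdot v.
\end{equation}
As $v \in T_p\mathbb{S}^2$ was arbitrary, $Ap$ is orthogonal to every vector orthogonal to $p$, hence $Ap$ lies in the one-dimensional span of $p$. Therefore $Ap = \mu p$ for some $\mu \in \mathbb{R}$, and $p$ is an eigenvector of $A$.

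There is no real obstacle here; the only thing to watch is that the local maximum is taken on $\mathbb{S}^2$ (otherwise the quadratic form has no maxima away from critical points of the unconstrained problem), and that the tangential curve $\gamma$ is genuinely sphere-valued so that the constraint is automatically enforced and no Lagrange multiplier bookkeeping is needed.
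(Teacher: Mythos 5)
Your proof is correct and rests on essentially the same first-order condition as the paper's, differing only in how the spherical constraint is handled: the paper lifts the map to the $0$-homogeneous function $x \mapsto |x|^{-2}Ax\cdot x$ on $\mathbb{R}^3\setminus\{0\}$ and sets its Euclidean gradient to zero, which gives $Ax = \frac{Ax\cdot x}{|x|^2}x$ directly, whereas you stay on $\mathbb{S}^2$ and differentiate along tangential curves $\gamma(t)=(p+tv)/|p+tv|$ to get $Ap\cdot v=0$ for all tangent $v$. Both routes avoid explicit Lagrange-multiplier bookkeeping and reach $Ap\parallel p$ in one line.
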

\begin{proof}
Consider the function $x\mapsto \frac{1}{|x|^2}Ax\cdot x$ for $x \in \mathbb{R}^3\setminus\{0\}$. If $x$ is a local maximum of this function, then $p=\frac{1}{|x|}$ is a local maximum of $\tilde{p}\mapsto Ap\cdot p$. In particular, it suffices to show that if $\nabla \frac{1}{|x|^2}Ax\cdot x =0$, then $Ax\parallel x$. The derivative is readily computed as $ \nabla \frac{1}{|x|^2}Ax\cdot x = -\frac{2}{|x|^4}(Ax\cdot x)x +\frac{2}{|x|^2}Ax$, so if the derivative vanishes $Ax=\frac{Ax\cdot x}{|x|^2}x$ and the result follows. 
\end{proof}

\begin{proposition}
\label{propAdmissibleQ}
The domain of $J$ is given by $\mathcal{Q}\cap\{Q \in \sm3 : |Q|^2>\eta\}$. In particular, $\text{dom}(J)$ is open.
\end{proposition}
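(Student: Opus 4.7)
The proof breaks naturally into necessity of $|Q|^2>\eta$, sufficiency, and openness.

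Necessity is direct. If $Q\in\text{dom}(J)$, then by \Cref{propFormOfJ} there exists a witness $f\in A(Q)$ with $\supp(f)\subseteq E_Q$ up to a null set. Since $Q$ is traceless, $Q\cdot(p\otimes p-\tfrac13 I)=Qp\cdot p$, so
\[|Q|^2=Q\cdot Q=\int_{\s2} f(p)\,Qp\cdot p\,dp > \eta\int_{\s2}f(p)\,dp = \eta,\]
the strict inequality coming from $Qp\cdot p>\eta$ a.e.\ on $\supp(f)$; that $Q\in\mathcal{Q}$ is built into the definition of $A(Q)$.

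For sufficiency, fix $Q\in\mathcal{Q}$ with $|Q|^2>\eta$; the task is to produce $f\in A(Q)$ with $\mf(f)<+\infty$, i.e.\ a bounded probability density on $\s2$ supported in $\overline{E_Q}$ modulo a null set whose Q-tensor is $Q$. This is a moment-matching problem: existence reduces to showing that $Q+\tfrac13 I$ lies in the relative interior of the convex hull $\mathrm{conv}\{p\otimes p:p\in\overline{E_Q}\}$ within the trace-one affine hyperplane. By standard convex duality, this interior condition is the strict supporting hyperplane inequality
\[\nu\cdot Q < \mu(\nu):=\sup_{p\in\overline{E_Q}}\nu p\cdot p\qquad \forall\,\nu\in\sm3\setminus\{0\}.\]
Granting the hyperplane inequality, one then writes $Q+\tfrac13 I$ as a convex combination (Carath\'eodory) of finitely many rank-one projections $p_i\otimes p_i$ with $p_i\in E_Q$ and constructs a bounded density by replacing each Dirac at $p_i$ with a small uniform bump in $E_Q$; an implicit-function perturbation fixes the Q-tensor at $Q$ exactly.

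The technical core is showing that $|Q|^2>\eta$ and $Q\in\mathcal{Q}$ imply the hyperplane inequality. I would analyze the supremum $\mu(\nu)$ by cases on a maximizer $p^*\in\overline{E_Q}$: either $p^*\in E_Q$, in which case \Cref{lemmaRotationDerivative} forces $p^*$ to be an eigenvector of $\nu$ so that $\mu(\nu)$ is an eigenvalue of $\nu$; or $p^*\in\partial E_Q=\{Qp\cdot p=\eta\}$, in which case Lagrange multipliers combined with \Cref{lemmaRotationDerivative} applied to the matrix $\nu-\gamma Q$ give $p^*$ as an eigenvector of $\nu-\gamma Q$ with $Qp^*\cdot p^*=\eta$. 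In the interior case the strict inequality $\nu\cdot Q<\mu(\nu)$ reduces to the elementary eigenvalue bound $|R|^2<\lambda_{\max}(R)$ valid for $R\in\mathcal{Q}$ (which in turn follows from $v_{\min}(R)>-\tfrac13$) applied to an appropriate combination of $\nu$ and $Q$. In the boundary case one eliminates $\gamma$ from the Lagrange relation and uses $Qp^*\cdot p^*=\eta$ together with $|Q|^2>\eta$ to force strictness.

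Openness of $\text{dom}(J)$ is then immediate from the explicit description: both $\mathcal{Q}$ (defined by the strict bound $v_{\min}>-\tfrac13$) and $\{Q:|Q|^2>\eta\}$ are open in $\sm3$. The main obstacle I anticipate is the boundary case of the hyperplane analysis, where the Lagrange coupling between $\nu$ and $Q$ makes strictness delicate and where the hypothesis $|Q|^2>\eta$ must be exploited sharply, beyond the elementary eigenvalue considerations of the interior case.
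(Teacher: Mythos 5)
Your necessity argument is correct and in fact slightly more direct than the paper's: rather than invoking the Borwein--Lewis supporting-hyperplane characterization with the test direction $\nu=-Q$, you compute $\int_{\s2} f(p)\,Qp\cdot p\,dp=|Q|^2>\eta$ directly from $\supp(f)\subset E_Q$. That part is fine.

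The sufficiency direction, however, has a genuine gap that you yourself flag. Your plan is to split the analysis of $\sup_{p\in\overline{E_Q}}\nu p\cdot p$ into two cases according to whether the maximiser lies in $E_Q$ or on $\partial E_Q$, and you explicitly admit that the boundary case --- where you propose a Lagrange-multiplier argument coupling $\nu$, $Q$, and a multiplier $\gamma$, then ``eliminate $\gamma$'' and ``force strictness'' --- is unresolved. That case is precisely where the proof lives, so as written this is a sketch of a strategy, not a proof. (Your interior case is also shakier than you suggest: the claimed reduction to an eigenvalue bound of the form $|R|^2<\lambda_{\max}(R)$ is not the right inequality, and it fails with equality at $R=0$ and at the extreme uniaxial state, so one would need a careful identification of which combination of $\nu$ and $Q$ the bound is to be applied to.)

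The paper sidesteps the entire case split. Rather than locating a maximiser of $p\mapsto \nu p\cdot p$ over $\overline{E_Q}$, it constructs a \emph{specific} competitor $p^*$ in the eigenframe of $Q$ with components $p^*_i=\pm\sqrt{v_i(Q)+\tfrac13}$. Two algebraic facts then do all the work: $Qp^*\cdot p^*=|Q|^2>\eta$, so $p^*\in E_Q$ automatically, and $\nu p^*\cdot p^*=Q\cdot\nu$ holds with exact equality for any $\nu\in\sm3$. Choosing the signs of the $p^*_i$ so that $p^*$ is \emph{not} an eigenvector of $\nu$ (always possible unless $Q$ has three equal eigenvalues, which is impossible for nonzero traceless $Q$), \Cref{lemmaRotationDerivative} shows $p^*$ is not even a local maximiser of $p\mapsto\nu p\cdot p$, and since $E_Q$ is open one can perturb $p^*$ inside $E_Q$ to beat the value $Q\cdot\nu$ strictly. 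This gives the supporting-hyperplane inequality without ever analysing where the supremum is attained, which is exactly what your boundary case is struggling with. You also don't need the Carath\'eodory-plus-bump construction of a bounded density: once the strict inequality is known for all $\nu\neq 0$, the Borwein--Lewis framework (the same one used for \Cref{propFormOfJ}) already delivers $J(Q)<+\infty$.

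Openness is immediate from the explicit description, as you say.
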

\begin{proof}
Given $Q \in \mathcal{Q}$, taken without loss of generality to be in its diagonal frame, define 
\begin{equation}\mathcal{Q}_Q=\left\{ \int_{E_Q}\left(p\otimes p-\frac{1}{3}I\right)f(p)\,d\mu_Q(p) : f \in \mathcal{P}(E_Q;\mu_Q)\right\},\end{equation}
where $\mathcal{P}(E_Q;\mu_Q)$ explicitly denotes that integration is with respect to $\mu_Q$ as given in \Cref{propFormOfJ}. Using the results of Borwein and Lewis \cite{borwein1991duality} and the equivalent minimisation problem given in \Cref{eqEasyMin}, $J(Q)<+\infty$ if and only if $Q \in \mathcal{Q}_Q$.  By \cite{taylor2015maximum}, $\mathcal{Q}_Q$ is an open, convex set and we have $Q_0 \in \sm3$ with $Q_0 \in \mathcal{Q}_Q$ if and only if, for all $A \in \sm3\cap\setminus\{0\}$, 
\begin{equation}A\cdot Q_0 < \sup\limits_{p \in E_Q} Ap\cdot p.\end{equation}

If $Q \in \text{dom}(J)$ then $Q \in \mathcal{Q}$, so the eigenvalue constraint must be satisfied. By taking $A=-Q$ this implies that 
\begin{equation}(-Q)\cdot Q< \sup\limits_{p\in E_Q} (-Q)p\cdot p=-\eta+\sup\limits_{p\in E_Q} (\eta-Qp\cdot p)\,dp \leq -\eta,\end{equation}
so multiplying both sides by $-1$ gives that $|Q|^2>\eta$.

Now assume that $|Q|^2>\eta$ and $v_{\min}(Q)>-\frac{1}{3}$, and take $A \in \sm3\cap \setminus\{0\}$. Then the aim is to show that there exists $p \in E_Q$ so that
$Q\cdot A <Ap\cdot p$. Rather than finding some $p$ satisfying the strict inequality, some $p \in E_Q$ will be found so that equality holds, and then a perturbation argument will be used to show that such a $p$ is not a maximiser of $Ap\cdot p$. 

Take $p^*$ to be given componentwise by $p_i^*=\pm\sqrt{v_i(Q)+\frac{1}{3}}$, where $v_i(Q)$ is the eigenvalue of $Q$ corresponding to $e_i$. Note that the eigenvalue constraint on $Q$ gives that this is well defined, and the tracelessness condition gives that $|p^*|^2=1$. Each $p_i$ admits a choice of sign, and this is unimportant with the exception that they must be chosen so that $p^*$ is not an eigenvector of $A$. Such a choice will always exist however. To see this, let $\tilde{p}^*=(I-2e_i\otimes e_i)p^*$, that is the $i$-th coordinate changes sign. Then $\tilde{p}^*\cdot p^*=1-2(p_i^*)^2$. If $\tilde{p}^*$ and $p^*$ are eigenvectors of $A$, then this dot product must either be $1$, $-1$ or $0$ by orthogonality. In no case can this be equal to $1$ or $-1$, since $|p_i^*|\neq 0,1$ due to the eigenvalue constraint on $Q$. There must be at least one choice of $i$ where this is non-zero, since $1-2(p_i^*)^2 =\frac{1}{3}-2v_i(Q)$. Therefore if all are zero, then Q has three equal but non-zero eigenvalues, which is impossible.

It remains to be verified that $p^* \in E_Q$, which holds since 
\begin{equation}Qp^*\cdot p^* =\sum\limits_{i=1}^3v_i(Q)\sqrt{v_i(Q)+\frac{1}{3}}^2=\sum\limits_{i=1}^3 v_i(Q)^2+\frac{v_i(Q)}{3}=|Q|^2 > \eta.\end{equation}
Finally, the desired equality holds since
\begin{equation}Q\cdot A = \sum\limits_{l=1}^3 Q_{ll}A_{ll}= \sum\limits_{l=1}^3 (p_l^*)^2 A_{ll}=Ap^*\cdot p^*.\end{equation}

If it can then be shown that $p^*$ is not a maximum of $p\mapsto Ap\cdot p$ on $E_Q$, then the result is proven. Since $p^*$ is in $E_Q$, which is open in $\mathbb{S}^2$, a perturbation argument can be used and we can simply demonstrate that $p^*$ is not a local maximiser on the sphere. From \Cref{lemmaRotationDerivative}, we know that any maximiser of $p\mapsto Ap\cdot p$ must be an eigenvector of $A$, however by our construction this is not the case. Therefore there exists some $p \in \mathbb{S}^2$ with $Ap\cdot p > Ap^*\cdot p^* = Q\cdot A$, and the result follows. 
\end{proof}

\begin{proposition}
$J$ is convex on the set $\{Q \in \text{dom}(J): v_{\min}(Q)>\eta\}$. 
\end{proposition}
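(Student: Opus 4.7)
The plan is to exploit the dual variational representation of $J$ from \Cref{propFormOfJ} to express $J$ as a supremum of functions that are manifestly convex in $Q$. On the set $S=\{Q\in\text{dom}(J):v_{\min}(Q)>\eta\}$, the condition $v_{\min}(Q)>\eta$ implies $Qp\cdot p>\eta$ for every $p\in\mathbb{S}^2$, so $E_Q=\mathbb{S}^2$ and the $\max(\cdot,0)$ inside the dual functional $F(Q,\lambda)$ becomes redundant. Thus on $S$ we have the clean formula
\begin{equation}
J(Q)=\max\limits_{\lambda\in\sm3}\left(\lambda\cdot Q - \ln\int_{\s2}\exp(\lambda p\cdot p)(Qp\cdot p-\eta)\,dp\right).
\end{equation}

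First I would verify that $S$ is convex: if $Q_0,Q_1\in S$ and $t\in[0,1]$, then by concavity of $v_{\min}$ on $\sm3$ we have $v_{\min}((1-t)Q_0+tQ_1)\geq (1-t)v_{\min}(Q_0)+tv_{\min}(Q_1)>\eta$, and membership in $\text{dom}(J)$ on the segment follows either from this direct estimate together with \Cref{propAdmissibleQ} or from convexity of $\text{dom}(J)$ in the same proposition. Next, for each fixed $\lambda\in\sm3$, I would observe that $Q\mapsto F(Q,\lambda)$ is convex on $S$: the term $\lambda\cdot Q$ is linear, while $Q\mapsto\int_{\s2}\exp(\lambda p\cdot p)(Qp\cdot p-\eta)\,dp$ is affine and strictly positive (since the integrand is pointwise positive on $\mathbb{S}^2$ when $Q\in S$), so composing with the convex decreasing function $-\ln$ yields a convex function of $Q$.

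Finally, convexity of $J$ on $S$ follows from the elementary fact that the pointwise supremum of a family of convex functions is convex: for $Q_t=(1-t)Q_0+tQ_1$,
\begin{equation}
J(Q_t)=\sup_\lambda F(Q_t,\lambda)\leq \sup_\lambda\left[(1-t)F(Q_0,\lambda)+tF(Q_1,\lambda)\right]\leq (1-t)J(Q_0)+tJ(Q_1).
\end{equation}
There is no serious obstacle here once the dual formula is in hand; the only point requiring care is confirming that $E_Q=\s2$ uniformly on $S$ so that the $\max$ truncation in $F$ can be dropped, since otherwise $F(\cdot,\lambda)$ is the log of a quantity that is only piecewise affine in $Q$ (the domain of integration $E_Q$ itself varies with $Q$), and the clean convexity argument above would break down.
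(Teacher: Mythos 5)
Your proof is correct and follows essentially the same route as the paper's: on the set where $v_{\min}(Q)>\eta$ the truncation $\max(\cdot,0)$ drops out, making $F(Q,\lambda)$ the sum of a linear term and $-\ln$ of a positive affine function of $Q$, hence convex in $Q$ for each fixed $\lambda$, and the supremum over $\lambda$ preserves convexity. The only addition in your write-up beyond what the paper records is the explicit check that the set $S$ is itself convex (via concavity of $v_{\min}$), which is a sensible detail to include.
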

\begin{proof}
If $v_{\min}(Q)>\eta$, then $Qp\cdot p-\eta>0$ for all $p \in \mathbb{S}^2$, and $E_Q=\mathbb{S}^2$. For such $Q$,
\begin{equation}
\begin{split}
J(Q)=& \max\limits_{\lambda \in \sm3}\lambda \cdot Q - \ln \left(\int_{\mathbb{S}^2}\exp(\lambda p\cdot p)(Qp\cdot p-\eta)\,dp\right)\\
=& \max\limits_{\lambda \in\sm3}\lambda \cdot Q -\ln\left(Q\cdot \int_{\mathbb{S}^2}\exp(\lambda p\cdot p)p\otimes p\,dp -\eta\int_{\mathbb{S}^2}\exp(\lambda p\cdot p)\,dp\right).
\end{split}
\end{equation}
By writing it this way, it is clear that $J$ can be written as the maximum of a set of convex functions, which follows immediately from the convexity of the negative logarithm. Therefore $J$ is convex. 
\end{proof}

\begin{proposition}[Uniform blow up of $J$]
\label{propJBlowUp}
For $Q \in \text{dom}(J)$, $J(Q)\geq \psi_s(Q)-\ln (|Q|^2-\eta)$, where $\psi_s$ is the Ball-Majumdar singular potential \cite{ball2010nematic} given by 
\begin{equation}\psi_s(Q)=\min\limits_{f \in \mathcal{A}(Q)}\int_{\s2}f(p)\ln f(p)\,dp.\end{equation} In particular, $J$ blows up to $+\infty$ uniformly at the boundary of its domain.
\end{proposition}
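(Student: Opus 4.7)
The plan is to exploit the natural splitting of the integrand defining $J(Q)$ into an entropy piece and a logarithmic piece, bounding each separately. Let $f_Q$ denote the unique minimiser in $A(Q)$ as in \Cref{propFormOfJ}, so that
\begin{equation*}
J(Q)=\int_{\s2}f_Q(p)\ln f_Q(p)\,dp-\int_{\s2}f_Q(p)\ln(Qp\cdot p-\eta)\,dp.
\end{equation*}
Since $f_Q \in A(Q)$, the first integral is bounded below by $\psi_s(Q)$ directly by the definition of the Ball-Majumdar singular potential as the infimum of Shannon entropy over $A(Q)$.

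For the second integral, I would apply Jensen's inequality to the probability measure $f_Q(p)\,dp$ and the concave function $\ln$, giving
\begin{equation*}
\int_{\s2}f_Q(p)\ln(Qp\cdot p-\eta)\,dp \leq \ln\left(\int_{\s2}f_Q(p)(Qp\cdot p-\eta)\,dp\right).
\end{equation*}
Since $\int_{\s2}f_Q(p)\,p\otimes p\,dp=Q+\frac{1}{3}I$ by the constraint defining $A(Q)$, and $\text{tr}(Q)=0$, one computes $\int_{\s2}f_Q(p)Qp\cdot p\,dp=Q\cdot(Q+\frac{1}{3}I)=|Q|^2$, so the right-hand side is exactly $\ln(|Q|^2-\eta)$. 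Note that $|Q|^2-\eta>0$ by \Cref{propAdmissibleQ}, so the logarithm is well defined. Combining the two bounds gives the claimed inequality $J(Q)\geq \psi_s(Q)-\ln(|Q|^2-\eta)$.

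For the uniform blow up, the boundary of $\text{dom}(J)=\mathcal{Q}\cap\{Q:|Q|^2>\eta\}$ consists of points where either $v_{\min}(Q)=-\frac{1}{3}$ or $|Q|^2=\eta$. If a sequence $Q_n \in \text{dom}(J)$ approaches the boundary with $|Q_n|^2\to\eta$, then $-\ln(|Q_n|^2-\eta)\to+\infty$, while $\psi_s(Q_n)$ stays bounded below by the elementary lower bound $\psi_s(Q)\geq -\ln(4\pi)$ on Shannon entropy of probability densities on $\s2$. If instead $v_{\min}(Q_n)\to -\frac{1}{3}$ (so $Q_n$ approaches the boundary of $\mathcal{Q}$), then $\psi_s(Q_n)\to+\infty$ by the standard blow up property of the Ball-Majumdar potential proved in \cite{ball2010nematic}, while $-\ln(|Q_n|^2-\eta)$ stays bounded below on a neighbourhood of such a boundary point. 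Either way $J(Q_n)\to+\infty$.

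The only mildly delicate point is the Jensen step, which is a one-line application once one notices that $f_Q(p)\,dp$ is a genuine probability measure and that the traceless constraint on $Q$ produces the clean value $|Q|^2$ for the mean. The bound $\psi_s(Q)\geq-\ln(4\pi)$, which is needed to make the blow up genuinely uniform rather than just asymptotic, follows from Jensen applied to $x\mapsto x\ln x$ on the probability space $(\s2,\frac{dp}{4\pi})$.
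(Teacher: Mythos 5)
Your proof is correct and follows essentially the same route as the paper's: split $J(Q)$ into the entropy piece (bounded below by $\psi_s(Q)$ by definition) and the logarithmic piece (bounded via Jensen's inequality applied to $\ln$ against the probability measure $f_Q\,dp$, giving exactly $\ln(|Q|^2-\eta)$), then argue blow-up separately for the two pieces of $\partial\,\text{dom}(J)$. The only cosmetic differences are that you evaluate directly at the optimal $f_Q$ rather than carrying the $\min$ through, and you make explicit the elementary bound $\psi_s(Q)\geq -\ln(4\pi)$ needed to justify uniformity near the $|Q|^2=\eta$ portion of the boundary.
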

\begin{proof}
Using Jensen's inequality, 
\begin{equation}
\begin{split}
\min\limits_{f \in A(Q)}\mathcal{F}(f)=&\min\limits_{f \in A(Q)}\int_{\mathbb{S}^2}f(p)\ln f(p)-f(p)\ln (Qp\cdot p-\eta)\,dp\\
\geq &\min\limits_{f \in A(Q)}\int_{\mathbb{S}^2}f(p)\ln f(p) - f(p) \,dp - \ln\left(\int_{\mathbb{S}^2}\left(Qp\cdot p-\eta\right)f(p)\,dp\right)\\
=& \min\limits_{f \in A(Q)}\int_{\mathbb{S}^2}f(p)\ln f(p) \,dp - \ln (|Q|^2-\eta)\\
=& \psi_s(Q)-\ln (|Q|^2-\eta).
\end{split}
\end{equation}
If $Q_j \to \partial\text{dom}(J)$, then this means either $v_{\min}(Q)\to-\frac{1}{3}$ , in which case $\psi_s(Q_j)\to+\infty$ \cite{ball2010nematic}, or $|Q_j|^2 \to \eta$, in which case the logarithmic term blows up. 
\end{proof}

This blow up of $J$ at the boundary of its domain serves to ensure that minimising sequences cannot be lost at the boundary. More precisely, if $J(Q_j)\to \min\limits_{Q \in \mathcal{Q}}J(Q)$, then we must have some $\delta>0$ so that $\text{dist}(Q_j,\partial\text{dom}(J))>\delta$ for all $j$. Furthermore, combining this with the continuity of $J$ which will be given in \Cref{propJDerivatives}, this gives that the sublevel sets $J^{-1}\left((-\infty,M]\right)$ for $M\in\mathbb{R}$ are compact. 

\begin{proposition}\label{propQLambdaEigenvectors}
$\Lambda$ is a frame-indifferent function of $Q$, and if $e\in\mathbb{S}^2$ is an eigenvector of $Q \in \text{dom}(J)$, then $e$ is an eigenvector of the corresponding maximiser of the dual problem $\Lambda(Q)$, and the converse holds if $\Lambda(Q) \neq 0$. 
\end{proposition}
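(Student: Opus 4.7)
The plan is to split the proposition into three claims: frame indifference, the forward implication that eigenvectors of $Q$ are eigenvectors of $\Lambda(Q)$, and the converse. First, I would prove frame indifference directly from the dual objective $F(Q,\lambda)$. Using the cyclic property of the trace, $(R\lambda R^T)\cdot(RQR^T) = \lambda\cdot Q$, and under the change of variable $q = R^Tp$ on the sphere (which preserves the $2$-dimensional Hausdorff measure since $R \in SO(3)$), the integral $\int_{\s2}\exp((R\lambda R^T)p\cdot p)\max((RQR^T)p\cdot p -\eta,0)\,dp$ transforms into the corresponding integral for $(Q,\lambda)$. Hence $F(RQR^T, R\lambda R^T) = F(Q,\lambda)$ identically in $\lambda$, and by uniqueness of the maximiser (from \Cref{propFormOfJ}) it follows that $R\Lambda(Q)R^T$ is the unique maximiser of $F(RQR^T,\cdot)$, giving $\Lambda(RQR^T) = R\Lambda(Q)R^T$.

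For the forward implication, suppose $Qe = qe$. I would set $R = 2e\otimes e - I$, the rotation by $\pi$ about the axis $e$; it acts as the identity on $\mathrm{span}(e)$ and as minus the identity on $e^\perp$. Since $Q$ preserves this decomposition ($e$ being an eigenvector), $R$ and $Q$ commute and in particular $RQR^T = Q$. Applying the frame-indifference result yields $R\Lambda(Q)R^T = \Lambda(RQR^T) = \Lambda(Q)$, so $\Lambda(Q)$ commutes with $R$; any matrix commuting with $R$ preserves the eigenspaces of $R$, namely $\mathrm{span}(e)$ and $e^\perp$, so $\Lambda(Q)e \in \mathrm{span}(e)$ and $e$ is an eigenvector of $\Lambda(Q)$.

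For the converse, suppose $\Lambda(Q)e = \mu e$ with $\Lambda(Q)\neq 0$, and let $R$ again be the rotation by $\pi$ about $e$. The same block-diagonality argument gives $R\Lambda(Q)R^T = \Lambda(Q)$, and frame indifference then yields $\Lambda(RQR^T) = \Lambda(Q)$. The main obstacle is to conclude $RQR^T = Q$ (equivalently, that $e$ is an eigenvector of $Q$), since the map $Q \mapsto \Lambda(Q)$ is not a priori globally injective. My approach is to combine the forward implication, which ensures that $Q$ and $\Lambda(Q)$ admit a common orthonormal eigenbasis, with the hypothesis $\Lambda(Q)\neq 0$, which forces $\Lambda(Q)$ to have at least two distinct eigenvalues. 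If $\Lambda(Q)$ has three distinct eigenvalues, its eigendirections are unique up to sign and coincide with those of $Q$, so any eigenvector of $\Lambda(Q)$ is automatically an eigenvector of $Q$. If instead $\Lambda(Q)$ has a two-dimensional eigenspace, with complementary one-dimensional eigenspace carrying a non-zero simple eigenvalue along an axis $v_3$, I would exploit the axial symmetry of $\exp(\Lambda(Q)p\cdot p)$ about $v_3$; substituting into the first-order condition $Q = \int_{\s2}(p\otimes p - \tfrac{1}{3}I)f_Q(p)\,dp$ and expanding $(Qp\cdot p - \eta)_+$ in its $\cos(2\phi)$ angular Fourier component in the transverse plane shows that the biaxial component of $Q$ must integrate against an axially symmetric weight to zero, forcing $Q$ itself to be uniaxial about $v_3$ and so any eigenvector of $\Lambda(Q)$ to be an eigenvector of $Q$.
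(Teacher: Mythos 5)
Your treatment of frame indifference is correct and essentially identical to the paper's. Your forward direction is correct but follows a genuinely different (and arguably more elementary) route: rather than invoking the Rivlin--Ericksen-type representation $\Lambda(Q)=\sum g_i(Q)Q^i$ for frame-indifferent tensor functions as the paper does, you exploit the rotation $R=2e\otimes e-I$ by $\pi$ about an eigenvector $e$ of $Q$ (which fixes $Q$), together with the already-proved equivariance $\Lambda(RQR^T)=R\Lambda(Q)R^T$, to conclude that $\Lambda(Q)$ commutes with $R$ and hence has $e$ as an eigenvector. This avoids the representation theorem entirely and is a clean alternative; the forward direction in particular gives simultaneous diagonalizability of $Q$ and $\Lambda(Q)$, which you then lean on for the converse.

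The converse, however, is where your proposal has a genuine gap, specifically in the two-dimensional eigenspace case. You claim that after fixing the axis $v_3$, expanding $(Qp\cdot p-\eta)_+$ in its $\cos(2\phi)$ angular mode, the biaxial component of $Q$ ``must integrate against an axially symmetric weight to zero.'' But $f_Q$ is \emph{not} axially symmetric: only the factor $\exp(\Lambda p\cdot p)$ is, while $\max(Qp\cdot p-\eta,0)$ itself carries a non-trivial $\cos(2\phi)$ mode proportional to $q_1-q_2$. If you carry out the Fourier computation properly, writing $Qp\cdot p-\eta=a(\theta)+b(\theta)\cos 2\phi$ with $b(\theta)=\tfrac{q_1-q_2}{2}\sin^2\theta$, the biaxial component of the first-order condition becomes
\begin{equation}
q_1-q_2=\frac{1}{Z}\int_0^\pi\sin^3\theta\,e^{\Lambda p\cdot p}\,I\bigl(a(\theta),b(\theta)\bigr)\,d\theta,\qquad I(a,b)=\int_0^{2\pi}\cos(2\phi)\bigl(a+b\cos 2\phi\bigr)_+\,d\phi,
\end{equation}
and since $0\le I(a,b)\le\pi b$ when $b>0$, the right-hand side is \emph{proportional} to $q_1-q_2$, yielding a self-consistency identity of the form $(q_1-q_2)(1-c)=0$ with $c\ge 0$. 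Concluding $q_1=q_2$ then requires showing $c\ne 1$, which your sketch does not do and which is not obvious. For comparison, the paper's own converse argument relies on writing $\Lambda(Q)=g_2(Q)\bigl(Q^2-\tfrac{|Q|^2}{3}I\bigr)+g_0(Q)Q$ and asserting that each non-zero summand ``has exactly the same eigenbasis as $Q$''; note that this too is terse, since a sum of two matrices sharing $Q$'s eigenbasis can still develop an accidental degeneracy, so the delicate point you have identified is precisely where the argument needs strengthening. Your Case 1 (three distinct eigenvalues of $\Lambda(Q)$, combined with simultaneous diagonalizability from the forward direction) is fine; it is Case 2 that needs an actual proof.
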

\begin{proof}
By writing the dual objective function as 
\begin{equation}F(Q,\lambda)=\lambda \cdot Q -\ln\left(\int_{\mathbb{S}^2}\exp(\lambda p\cdot p)\max(Qp\cdot p-\eta,0)\,dp\right),\end{equation}
it is immediate that $F$ is frame indifferent, so that for all $R \in \text{SO(3)}$, $F(RQR^T,R\lambda R^T)=F(Q,\lambda)$. In particular, combined with the uniqueness of maximisers, implies that $\Lambda(RQR^T)=R\Lambda(Q)R^T$. Since $\Lambda$ is frame indifferent, it can be written as $\Lambda(Q)=\sum\limits_{i=0}^3 g_i(Q)Q^i$, with $g_i$ scalar-valued frame-indifferent functions of $Q$, so that if $e$ is an eigenvector of $Q$, $e$ must be an eigenvector of $\Lambda(Q)$. since $\text{Tr}(\Lambda(Q))=0$, this implies that the decomposition can be written as $\Lambda(Q)=g_2(Q)\left(Q^2-\frac{|Q|^2}{3}I\right)+g_0(Q)Q$. Both terms in the sum have exactly the same eigenbasis as $Q$ unless they are zero. 
\end{proof}

\begin{remark}
The previous result leaves the possibility that $\Lambda(Q)$ may have eigenvectors that $Q$ does not if $\Lambda(Q) =0$. By considering the example when $\eta=-\frac{2}{15}$ in \Cref{propEta215Drama}, there are constructed examples such that $Q\neq 0$, but $\Lambda(Q)=0$ which demonstrate that $\Lambda(Q)$ can have eigenvectors that $Q$ does not. However, if $\Lambda(Q)$ is uniaxial and non-zero, then $Q$ must be uniaxial and non-zero also.
\end{remark}

\begin{corollary}

If $f_Q$ is the optimal energy distribution corresponding to $Q$, then for all $R \in \text{SO}(3)$, $f_{RQR^T}(p)=f_Q(Rp)$. In particular, if $RQR^T=Q$, then $f_Q(Rp)=f_Q(p)$. Furthermore $J$ is frame indifferent so that $J(RQR^T)=J(Q)$.
\end{corollary}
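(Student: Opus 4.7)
The plan is to combine the explicit form of $f_Q$ from \Cref{propFormOfJ} with the frame-indifference of $\Lambda$ established in \Cref{propQLambdaEigenvectors}. First I would write out $f_{RQR^T}$ using the closed form
\[
f_{RQR^T}(p)=\frac{1}{Z(RQR^T)}\exp\bigl(\Lambda(RQR^T)p\cdot p\bigr)\max\bigl(RQR^Tp\cdot p-\eta,0\bigr),
\]
substitute $\Lambda(RQR^T)=R\Lambda(Q)R^T$, and use the elementary identity $(RAR^T)x\cdot x=A(R^Tx)\cdot(R^Tx)$ (valid for symmetric $A$) to rewrite the above as
\[
\frac{1}{Z(RQR^T)}\exp\bigl(\Lambda(Q)(R^Tp)\cdot(R^Tp)\bigr)\max\bigl(Q(R^Tp)\cdot(R^Tp)-\eta,0\bigr).
\]
This has exactly the shape of $f_Q$ evaluated at $R^Tp$, so it remains to check that the normalising constants coincide: but $Z(RQR^T)$ is defined by integrating the numerator against $dp$ over $\s2$, and the change of variables $q=R^Tp$, which preserves the Hausdorff measure on $\s2$, carries this integral to the integral defining $Z(Q)$. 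This yields $f_{RQR^T}(p)=f_Q(R^Tp)$, and the statement of the corollary follows upon replacing $R$ with $R^T\in\mathrm{SO}(3)$.

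The specialisation to $RQR^T=Q$ is then immediate: the left-hand side becomes $f_Q(p)$ and the right-hand side is $f_Q(Rp)$, giving the rotational invariance of $f_Q$ under the stabiliser of $Q$. For the frame-indifference of $J$, the cleanest route uses the dual characterisation $J(Q)=\max_{\lambda\in\sm3}F(Q,\lambda)$ from \Cref{propFormOfJ}: since $F(RQR^T,R\lambda R^T)=F(Q,\lambda)$ was already observed in the proof of \Cref{propQLambdaEigenvectors} and $\lambda\mapsto R\lambda R^T$ is a bijection of $\sm3$, taking the supremum over $\lambda$ on both sides yields $J(RQR^T)=J(Q)$. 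Alternatively, substituting $f_{RQR^T}$ directly into $\mathcal{F}$ and applying the same change of variable $q=R^Tp$ would lead to the same conclusion.

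No serious obstacle is anticipated; the whole argument is symbolic substitution together with the rotation invariance of the surface measure on $\s2$. The only detail that warrants explicit attention is the equality of the normalising constants $Z(Q)=Z(RQR^T)$, which is precisely where that measure-theoretic invariance enters, and which also underpins the frame-indifference of $J$ through the dual formula.
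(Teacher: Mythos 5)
Your computation is correct and follows essentially the paper's route: substitute the explicit form of $f_Q$, use the frame indifference $\Lambda(RQR^T)=R\Lambda(Q)R^T$, and control the normalising constant via rotation invariance of the surface measure. The one point worth flagging is the handedness of the rotation. Your derivation correctly yields $f_{RQR^T}(p)=f_Q(R^Tp)$, and this is the right formula: the distribution $p\mapsto f_Q(R^Tp)$ has Q-tensor $RQR^T$, whereas $p\mapsto f_Q(Rp)$ has Q-tensor $R^TQR$. The corollary as printed writes $f_Q(Rp)$, which appears to be an $R$ versus $R^T$ slip in the paper. Your attempted reconciliation, ``the statement follows upon replacing $R$ with $R^T$,'' does not actually close this gap: performing that substitution throughout your identity produces $f_{R^TQR}(p)=f_Q(Rp)$, not $f_{RQR^T}(p)=f_Q(Rp)$. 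So you should state the corrected formula rather than try to match the printed one.

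Both downstream assertions survive regardless. If $RQR^T=Q$ then $R^TQR=Q$ as well, so $f_Q(Rp)=f_Q(p)$ either way. For the frame indifference of $J$, your route through the dual representation $J(Q)=\max_{\lambda}F(Q,\lambda)$ together with $F(RQR^T,R\lambda R^T)=F(Q,\lambda)$ is clean and in fact a little tidier than the paper's chain $J(RQR^T)=\mathcal{F}(f_{RQR^T})=\mathcal{F}([Rf_Q])=\mathcal{F}(f_Q)=J(Q)$, whose middle equality implicitly rests on the same $R$/$R^T$ identification that is off in the corollary statement. The alternative you mention, changing variables directly in $\mathcal{F}$, is the paper's approach and works too once the handedness is fixed.
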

\begin{proof}
This follows immediately since $f_Q$ can be written as 
\begin{equation}f_Q(p)=\frac{1}{Z}\exp(\Lambda(Q)p\cdot p)\max(Qp\cdot p-\eta,0).\end{equation}
Frame indifference of $J$ then follows since if $[Rf](p)=f(Rp)$, then $J(RQR^T)=\mathcal{F}(f_{RQR^T})=\mathcal{F}([Rf_Q])=\mathcal{F}(f_Q)=J(Q)$, using that $\mathcal{F}$ is frame indifferent.
\end{proof}

At face-value, the dual maximisation problem $\max\limits_{\lambda \in \sm3} F(Q,\lambda)$ is over a five-dimensional vector space. However by fixing $Q$ in its diagonal frame and using the previous results, it is therefore possible to only consider $\lambda$ in the same diagonal frame. In particular, the maximisation is only over a two-dimensional vector space, which is advantageous if one wishes to calculate $J$ numerically via the dual optimisation problem.

In order to establish smoothness properties of $J$, the first step will be to establish smoothness of the map $\Lambda: \text{dom}(J)\to\sm3$. This will be done using an implicit function theorem argument on the relation
\begin{equation}0=G(Q,\Lambda(Q))=Q-\frac{\int_{E_Q}\exp(\Lambda(Q) p\cdot p)(Qp\cdot p-\eta)\left(p\otimes p-\frac{1}{3}I\right)\,dp}{\int_{E_Q}\exp(\Lambda(Q) p\cdot p)(Qp\cdot p-\eta)\,dp}.\end{equation} 
Before the implicit function theorem can be used, it must first be established that $G$ is $C^1$ on $\text{dom}(J)\times\sm3$. We can illustrate the techniques required in a simpler setting in order to establish stronger regularity results on the set $\{Q \in \mathcal{Q}:v_{\min}(Q)>\eta\}$. For the following, we take $\Lambda, J$ to depend explicitly on $\eta$ also. 

\begin{proposition}
$\Lambda$ and $J$ are $C^\infty$ on $\{(Q,\eta)\in\mathcal{Q}\times\mathbb{R}: |Q|^2>\eta\, ,\,Q \in \mathcal{Q}: v_{\min}(Q)>\eta\}$.
\end{proposition}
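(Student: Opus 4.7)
The plan is to apply the implicit function theorem to the defining relation
\begin{equation*}
G(Q,\lambda,\eta):=Q-\frac{\int_{\s2}\exp(\lambda p\cdot p)(Qp\cdot p-\eta)\left(p\otimes p-\tfrac{1}{3}I\right)\,dp}{\int_{\s2}\exp(\lambda p\cdot p)(Qp\cdot p-\eta)\,dp}
\end{equation*}
at $\lambda=\Lambda(Q,\eta)$, and then obtain $J$ by composition. On the stated open set the assumption $v_{\min}(Q)>\eta$ forces $E_Q=\s2$ and $Qp\cdot p-\eta$ is strictly positive on the compact domain $\s2$, uniformly in a neighbourhood of any fixed $(Q,\eta)$. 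The integrand of both numerator and denominator is then an analytic function of $(Q,\lambda,\eta)$, bounded below by a positive constant in the denominator, so differentiation under the integral sign gives that $G$ is $C^\infty$ on the open set $\{(Q,\eta):v_{\min}(Q)>\eta\}\times\sm3$.

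Next, observe that $G$ is precisely $\partial_\lambda F(Q,\lambda,\eta)$ for the dual objective $F$ in \Cref{propFormOfJ}: the first-order condition characterising $\Lambda(Q,\eta)$ is exactly $G(Q,\Lambda,\eta)=0$. Hence $\partial_\lambda G(Q,\Lambda,\eta)=\partial_\lambda^2 F(Q,\Lambda,\eta)$. A direct computation of the Hessian of the log-partition function yields, for $H\in\sm3$,
\begin{equation*}
H\cdot\partial_\lambda^2 F(Q,\Lambda,\eta)\cdot H = -\mathrm{Var}_{f_Q}\!\left(p\mapsto Hp\cdot p\right),
\end{equation*}
where the variance is taken with respect to the optimal density $f_Q$ from \Cref{propFormOfJ}.

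The main obstacle is to verify that this Hessian is strictly negative definite, ensuring invertibility of $\partial_\lambda G$. If $\mathrm{Var}_{f_Q}(Hp\cdot p)=0$ then $p\mapsto Hp\cdot p$ is constant $f_Q$-a.e.; since $f_Q>0$ everywhere on $\s2$ (because $E_Q=\s2$ and $\exp>0$), it is constant on all of $\s2$. Integrating against $dp$ and using $H\in\sm3$ gives $\frac{4\pi}{3}\mathrm{tr}(H)=4\pi c$, so the constant is zero, and the map $H\mapsto Hp\cdot p$ from $\sm3$ into continuous functions on $\s2$ is injective (since the second-order spherical harmonics span a five-dimensional space). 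Therefore $H=0$, and $\partial_\lambda G(Q,\Lambda,\eta)$ is invertible.

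The implicit function theorem then supplies a $C^\infty$ local solution to $G(Q,\lambda,\eta)=0$; by uniqueness of the maximiser $\Lambda(Q,\eta)$ from \Cref{propFormOfJ} these local solutions agree with $\Lambda$, so $\Lambda\in C^\infty$ on the stated open set. Finally, since $J(Q,\eta)=F(Q,\Lambda(Q,\eta),\eta)$ and $F$ is smooth in $(Q,\lambda,\eta)$ on this set, $J$ is $C^\infty$ as a composition of smooth maps.
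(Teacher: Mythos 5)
Your proof is correct and follows essentially the same route as the paper: apply the implicit function theorem to $G=\partial_\lambda F$ (which is $C^\infty$ on this set because $v_{\min}(Q)>\eta$ forces $E_Q=\s2$, removing the variable-domain-of-integration issue), verify that $\partial_\lambda G=\partial_\lambda^2 F$ is non-degenerate via the variance interpretation, and then obtain $J=F(Q,\Lambda(Q,\eta),\eta)$ by composition. The only cosmetic differences are that you give a self-contained injectivity argument (zero variance $\Rightarrow$ $Hp\cdot p$ constant $\Rightarrow$ constant is zero since $\operatorname{tr}H=0$ $\Rightarrow$ $H=0$ by injectivity of $H\mapsto Hp\cdot p$ on $\sm3$), where the paper appeals to the pseudo-Haar property; and you correctly record the Hessian as negative definite, whereas the paper's displayed formula for $\partial_\lambda G$ carries the opposite sign --- immaterial here since only invertibility is used.
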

\begin{proof}
First note that that on $\text{dom}(J)\cap \{Q \in \mathcal{Q}: v_{\min}(Q)>\eta\}$, $E_Q=\mathbb{S}^2$, and $Q,\Lambda(Q)$ are related by
\begin{equation}0=Q-\frac{1}{\int_{\mathbb{S}^2}\exp(\Lambda p\cdot p)(Qp\cdot p-\eta)\,dp}\int_{\mathbb{S}^2}\exp(\Lambda p\cdot p)(Qp\cdot p-\eta)\left(p\otimes p-\frac{1}{3}I\right)\,dp.\end{equation}
Since there is no issue with the domain of integration, and the integrand is $C^\infty$, this then gives that $G$ is $C^\infty$ for $Q,\Lambda(Q)$ in the given subdomain. The derivative of $G$ with respect to $\lambda$ at $(Q,\Lambda(Q))$ is given by 
\begin{equation}\label{eqVariance}
\int_{\s2}\left(p\otimes p-\frac{1}{3}I\right)\otimes \left(p\otimes p-\frac{1}{3}I\right)f_Q(p)\,dp - Q\otimes Q,
\end{equation}
which is strictly positive definite by Cauchy-Schwarz and the fact that the linearly independent components of $p\otimes p-\frac{1}{3}$ and the constant function form a pseudo-Haar set. Therefore the implicit function theorem gives that $\Lambda$ is a $C^\infty$ function of $Q$. Using that$J(Q)=F(Q,\Lambda(Q))$, where $F$ is also $C^\infty$ on the given subdomain implies that $J$ is a $C^\infty$ function of $Q$ too.
\end{proof}

Next we turn to the $C^1$ regularity of $J$ on its entire domain. Similarly to before, the main ingredient will be the regularity of the function $G:\text{dom}(J)\times\sm3 \to \sm3$ defined by 
\begin{equation}G(Q,\lambda,\eta)=Q-\left(\int_{E_Q}\exp(\lambda p\cdot p)(Qp\cdot p-\eta)\,dp\right)^{-1}\int_{E_Q}\exp(\lambda p\cdot p)(Qp\cdot p-\eta)\left(p\otimes p-\frac{1}{3}I\right)\,dp.\end{equation}
 
Due to the dependence of the domain of integration on $Q$, it is less clear how regular $G$ is as a function of $Q$. Heuristically, the integrand vanishing on $\partial E_Q$ avoids difficulties up to $C^1$ regularity. We proceed by showing that functions $g:\text{dom}(J)\times\sm3 \to V$, where $V$ is a real vector space, given by
\begin{equation}g(Q,\lambda,\eta)=\int_{E_Q}h(Q,\lambda,p)(Qp\cdot p-\eta)\,dp,\end{equation}
are $C^1$ under appropriate assumptions on $h$. Rather than turn to a proof based in differential geometry, the method we will use to demonstrate this is to consider instead approximations 
\begin{equation}g_\epsilon(Q,\lambda)=\int_{\s2}h(Q,\lambda,p)\varphi_\epsilon(Qp\cdot p-\eta)(Qp\cdot p-\eta)\,dp,\end{equation}
where $\varphi_\epsilon(x) \to \max(x,0)$, $\varphi_\epsilon'(x)\to \chi_{(0,+\infty)}(x)$ and $\varphi_\epsilon \in C^1(\mathbb{R},(0,+\infty))$. The challenge is that the convergence of $\varphi_\epsilon$ cannot be in $C^1$ norm. We will have to permit the convergence to be non-uniform at $x=0$, and then show that this is not problematic since the size of the set where $Qp\cdot p\approx \eta$ can be controlled in a uniform way. More precisely, it will be shown that for all compact $K \subset \{(Q,\eta)\in\mathcal{Q}\times\mathbb{R}:|Q|^2>\eta\}$, 
\begin{equation}\label{eqSetControl}
\limsup\limits_{t \to 0^+}\sup\limits_{(Q,\eta) \in K} \mathcal{H}^2(\{p\in\mathbb{S}^2:|Qp\cdot p-\eta |<t\})=0,
\end{equation}
and that this allows us to prove that $g_\epsilon$ has a limit with the $C^1_{\text{loc}}$ topology. The limit is then shown to be $g$ as expected, providing the necessary result. 

\begin{lemma}
\label{lemmaUniformlySmallMeasure}
Let $K\subset \{(Q,\eta) \in \mathcal{Q}\times \mathbb{R} : |Q|^2>\eta\}$ be compact. Then \begin{equation}\limsup\limits_{t \to 0^+}\sup\limits_{(Q,\eta) \in K} \mathcal{H}^2(\{p\in\mathbb{S}^2:|Qp\cdot p-\eta |<t\})=0.\end{equation}
\end{lemma}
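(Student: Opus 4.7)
The plan is to argue by contradiction using compactness of $K$ together with the uniform continuity of $(Q,\eta,p)\mapsto Qp\cdot p-\eta$. Suppose the conclusion fails: then there exist $\varepsilon>0$, a sequence $t_n\to 0^+$, and points $(Q_n,\eta_n)\in K$ with
\begin{equation}
\mathcal{H}^2(A_n)\geq \varepsilon,\qquad A_n:=\{p\in\mathbb{S}^2:|Q_np\cdot p-\eta_n|<t_n\}.
\end{equation}
By compactness of $K$, I pass to a subsequence (not relabelled) along which $(Q_n,\eta_n)\to(Q^*,\eta^*)\in K$, using that $K$ is closed. Writing $\phi_n(p)=Q_np\cdot p-\eta_n$ and $\phi^*(p)=Q^*p\cdot p-\eta^*$, the elementary estimate $\|\phi_n-\phi^*\|_{L^\infty(\mathbb{S}^2)}\leq|Q_n-Q^*|+|\eta_n-\eta^*|\to 0$ gives uniform convergence on $\mathbb{S}^2$. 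Hence for any fixed $\delta>0$, $A_n\subset\{p\in\mathbb{S}^2:|\phi^*(p)|<\delta\}$ for all $n$ large enough, and therefore $\mathcal{H}^2(\{|\phi^*|<\delta\})\geq\varepsilon$ for every $\delta>0$. Taking the intersection over $\delta>0$ of these nested sets of finite measure yields
\begin{equation}
\mathcal{H}^2\bigl(\{p\in\mathbb{S}^2:\phi^*(p)=0\}\bigr)\geq\varepsilon.
\end{equation}

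The whole argument then reduces to proving $\mathcal{H}^2(\{p\in\mathbb{S}^2:Q^*p\cdot p=\eta^*\})=0$ whenever $|Q^*|^2>\eta^*$ — and this is the only place the defining inequality of $K$ is used. I split into two cases. If $Q^*=0$, then $|Q^*|^2=0>\eta^*$, so $\eta^*<0$ and the level set $\{p:0=\eta^*\}$ is empty. If $Q^*\neq 0$, the quadratic form $p\mapsto Q^*p\cdot p$ is not constant on $\mathbb{S}^2$: constancy on the sphere would force $Q^*$ to be a scalar multiple of $I$, and with $\operatorname{Tr}(Q^*)=0$ this gives $Q^*=0$, a contradiction. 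Thus $p\mapsto Q^*p\cdot p-\eta^*$ is a non-trivial real-analytic function on $\mathbb{S}^2$, so its zero set is a proper real-analytic subvariety with Hausdorff dimension at most one, hence of $\mathcal{H}^2$-measure zero. This contradicts the lower bound $\varepsilon$ above.

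The main obstacle is not any single technical step but arranging the argument so that the unavoidable lack of $C^1$-convergence near $\{\phi^*=0\}$ — precisely the phenomenon this lemma exists to control — is isolated inside a single, essentially algebraic, statement about level sets of a non-trivial quadratic form on $\mathbb{S}^2$. The closedness of $K$ is essential: were the limit $(Q^*,\eta^*)$ permitted to lie on the boundary $|Q^*|^2=\eta^*$, the case $Q^*=0$, $\eta^*=0$ would give the whole sphere as the level set and the argument would collapse.
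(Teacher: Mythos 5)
Your proof is correct and follows essentially the same strategy as the paper: pass to a convergent subsequence in $K$ by compactness, exploit uniform convergence of the quadratic form to push the level-set estimate to the limit, and reduce everything to showing $\mathcal{H}^2(\{p:Q^*p\cdot p=\eta^*\})=0$ with the constraint $|Q^*|^2>\eta^*$ ruling out the degenerate case $(0,0)$. The only difference is that you make the final measure-zero step self-contained via the $Q^*=0$ vs.\ $Q^*\neq 0$ dichotomy and real-analyticity, whereas the paper simply cites its earlier pseudo-Haar condition.
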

\begin{proof}
Take $t_j \to 0$ and $(Q_j,\eta_j) \in K$ so that 
\begin{equation}\lim\limits_{j \to \infty} \mathcal{H}^2(\{p\in\mathbb{S}^2:|Q_jp\cdot p-\eta_j |<t_j\})=\limsup\limits_{t \to 0^+}\sup\limits_{Q \in K} \mathcal{H}^2(\{p\in\mathbb{S}^2:|Qp\cdot p-\eta |<t\}).\end{equation}
Take a subsequence, not relabelled, so that $t_j \searrow t$, $(Q_j,\eta_j) \to (Q,\eta) \in K$, and $\sup\limits_{p \in \mathbb{S}^2}|(Q_j-Q)p\cdot p+\eta_j-\eta|\searrow 0$. Note that $(Q,\eta)\neq (0,0)$ since $K$ is compact. Let $p \in \{q \in \s2 : |Q_j p\cdot p-\eta_j|<t_j\}$. Then 
\begin{equation}
\begin{split}
t_j >&| Q_j p\cdot p-\eta_j|\\
=& |Q_j p\cdot p-\eta+(Q-Q_j)p\cdot p +(\eta-\eta_j)|\\
\geq &\left||Qp\cdot p-\eta|-|(Q-Q_j)p\cdot p+\eta-\eta_j|\right|
\end{split}
\end{equation}
so that $\delta_j = t_j+\sup\limits_{q \in \mathbb{S}^2}|(Q_j-Q)q\cdot q+\eta_j-\eta|\geq |Qp\cdot p-\eta|$ and $\delta_j\searrow 0$. Therefore using that the sets $\{ p \in \mathbb{S}^2:|Qp\cdot p-\eta|<\delta_j\}$ are nested in $j$,
\begin{equation}
\begin{split}
\limsup\limits_{t \to 0^+}\sup\limits_{(Q,\eta) \in K} \mathcal{H}^2(\{p\in\mathbb{S}^2:|Qp\cdot p-\eta |<t\})=&\lim\limits_{j \to \infty} \mathcal{H}^2(\{p\in\mathbb{S}^2:|Q_jp\cdot p-\eta_j |<t_j\})\\
\leq & \lim\limits_{j \to \infty}\mathcal{H}^2\left(\{p \in \mathbb{S}^2: |Qp\cdot p-\eta|<\delta_j\}\right)\\
=&\mathcal{H}^2\left(\bigcap\limits_{j =1}^\infty \{p\in\mathbb{S}^2 : |Qp\cdot p-\eta|<\delta_j\}\right)\\
=& \mathcal{H}^2\left(\{p\in\mathbb{S}^2:Qp\cdot p=\eta\}\right)=0,
\end{split}
\end{equation}
since $(Q,\eta)\neq (0,0)$ using the pseudo-Haar condition.
\end{proof}

Let $\varphi_\epsilon :\mathbb{R}\to\mathbb{R}$ be so that $\varphi_\epsilon(x)\to \max(x,0)=\varphi_0(x)$ in $L^\infty_{\text{loc}}(\mathbb{R})$, $\varphi_\epsilon'(x)\to \chi_{(0,\infty)}(x)=\varphi_0(x)$ in $L^\infty_{\text{loc}}(\mathbb{R}\setminus\{0\})$, and $||\varphi'_\epsilon||_{K',\infty}$ is bounded independently of $\epsilon>0$ on compact subsets $K'$ of $\mathbb{R}$. The example to have in mind is 
\begin{equation}\varphi_\epsilon(x)=\epsilon\int_{-\infty}^\frac{x}{\epsilon} \frac{\text{erf}(x)+1}{2}\,dx.\end{equation} Let $\mathcal{A}=\{(Q,\lambda,\eta) \in \mathcal{Q} \times \sm3 \times \mathbb{R} : |Q|^2>\eta\}$. 

\begin{proposition}\label{propFirstOrderContinuous}
Let $h :\mathcal{Q}\times \sm3 \times \mathbb{S}^2 \to \mathbb{R}^k$ be continuous. Then for all $K \subset\subset \mathcal{A}$, 
\begin{equation}\int_{\mathbb{S}^2}h(Q,\lambda,p)\varphi_\epsilon(Qp\cdot p-\eta)\,dp\to \int_{E_Q}h(Q,\lambda,p)(Qp\cdot p-\eta)\,dp\end{equation}
uniformly on $K$.
\end{proposition}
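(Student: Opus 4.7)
The plan is to exploit the identity
\begin{equation}
\int_{E_Q} h(Q,\lambda,p)(Qp\cdot p-\eta)\,dp = \int_{\mathbb{S}^2} h(Q,\lambda,p)\,\varphi_0(Qp\cdot p-\eta)\,dp,
\end{equation}
where $\varphi_0(x) = \max(x,0)$, and then to reduce the problem to estimating $\varphi_\epsilon - \varphi_0$. Fix a compact set $K \subset\subset \mathcal{A}$. Since $h$ is continuous, the restriction of $|h|$ to $K\times\mathbb{S}^2$ is bounded by some constant $C_1$. Likewise, since $K$ is compact, there exists $M>0$ such that $|Qp\cdot p-\eta|\leq M$ for all $(Q,\lambda,\eta)\in K$ and all $p\in\mathbb{S}^2$, so the arguments of $\varphi_\epsilon$ and $\varphi_0$ lie in the compact interval $[-M,M]$.

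Given $t>0$, I would split $\mathbb{S}^2 = A_t(Q,\eta) \cup B_t(Q,\eta)$, where $A_t = \{p : |Qp\cdot p - \eta| < t\}$ and $B_t$ is its complement. On $B_t$, the argument of $\varphi_\epsilon$ lives in the compact set $[-M,-t]\cup [t,M]\subset\mathbb{R}\setminus\{0\}$, and the hypothesis that $\varphi_\epsilon\to\varphi_0$ in $L^\infty_{\mathrm{loc}}(\mathbb{R})$ (in particular on $[-M,M]$) gives some $\sigma(\epsilon)\to 0$ with $\sup_{x\in[-M,M]}|\varphi_\epsilon(x)-\varphi_0(x)|\leq \sigma(\epsilon)$. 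Hence the integral over $B_t$ of the difference is bounded by $4\pi C_1\sigma(\epsilon)$, independently of $(Q,\lambda,\eta)\in K$ and of $t$.

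On $A_t$, we use that both $\varphi_0$ and $\varphi_\epsilon$ remain bounded on $[-M,M]$, uniformly in $\epsilon$ (once $\epsilon$ is small, by the uniform convergence just used), so there is a constant $C_2$ with $|\varphi_\epsilon(x)-\varphi_0(x)|\leq C_2$ for all $x\in[-M,M]$. The contribution from $A_t$ is then bounded by $C_1 C_2 \mathcal{H}^2(A_t)$, and by \Cref{lemmaUniformlySmallMeasure}, $\sup_{(Q,\eta)\in \pi(K)}\mathcal{H}^2(A_t)\to 0$ as $t\to 0^+$, where $\pi$ denotes projection onto the $(Q,\eta)$ coordinates (note $\pi(K)$ is still a compact subset of $\{|Q|^2>\eta\}$).

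To conclude: given $\delta>0$, first choose $t>0$ small enough that $C_1 C_2 \sup_{(Q,\eta)\in\pi(K)}\mathcal{H}^2(A_t) < \delta/2$, then choose $\epsilon_0$ so that $4\pi C_1 \sigma(\epsilon)<\delta/2$ for $\epsilon<\epsilon_0$. Then for such $\epsilon$,
\begin{equation}
\sup_{(Q,\lambda,\eta)\in K}\left|\int_{\mathbb{S}^2} h(Q,\lambda,p)\bigl(\varphi_\epsilon(Qp\cdot p-\eta)-\varphi_0(Qp\cdot p-\eta)\bigr)\,dp\right| < \delta,
\end{equation}
establishing the claimed uniform convergence. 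The main subtlety is that $\varphi_\epsilon\to\varphi_0$ fails to converge in $C^1$ norm at the origin; the role of \Cref{lemmaUniformlySmallMeasure} is precisely to compensate for this by controlling the measure of the set where the argument is dangerously close to zero, uniformly in $(Q,\eta)\in K$.
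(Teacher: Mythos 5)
Your proof is correct, but it is considerably more elaborate than necessary — and your own argument already reveals the redundancy. The standing assumption is that $\varphi_\epsilon \to \varphi_0$ in $L^\infty_{\mathrm{loc}}(\mathbb{R})$, which means uniformly on every compact interval \emph{including} those containing the origin. Your bound on the $B_t$ part already uses the uniform estimate $\sup_{x\in[-M,M]}|\varphi_\epsilon(x)-\varphi_0(x)|\leq\sigma(\epsilon)$ on the full interval $[-M,M]$; that same estimate applies just as well on $A_t$, so the decomposition $\mathbb{S}^2=A_t\cup B_t$ does no work. The whole proof collapses to
\begin{equation}
\left|\int_{\mathbb{S}^2} h(Q,\lambda,p)\bigl(\varphi_\epsilon-\varphi_0\bigr)(Qp\cdot p-\eta)\,dp\right|
\leq 4\pi\,\|h\|_{K,\infty}\,\|\varphi_\epsilon-\varphi_0\|_{[-M,M],\infty}\to 0,
\end{equation}
uniformly over $K$, which is exactly the one-line argument the paper gives.

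Your closing remark also mislocates the difficulty: the failure of $C^1$-convergence at the origin, and the need for \Cref{lemmaUniformlySmallMeasure} to control $\mathcal{H}^2(\{|Qp\cdot p-\eta|<t\})$, are the crux of the \emph{next} proposition (\Cref{propSecondOrderContinuous}), where the integrand contains $\varphi_\epsilon'$ and $\varphi_\epsilon'\to\varphi_0'$ only in $L^\infty_{\mathrm{loc}}(\mathbb{R}\setminus\{0\})$. At zeroth order $\varphi_0=\max(\cdot,0)$ is continuous, so there is no bad set to excise and no need for the measure lemma. Importing that machinery here is harmless but obscures where the genuine subtlety lies in the paper's chain of approximations.
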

\begin{proof}
The result is immediate using that $h$ is continuous so can be bounded independently of $Q,\lambda$ and $p$, and that $\varphi_\epsilon \to \varphi_0$ uniformly.  
\end{proof}

\begin{proposition}\label{propSecondOrderContinuous}
Let $h :\mathcal{Q}\times \sm3 \times \mathbb{S}^2 \to \mathbb{R}^k$. Then for all $K \subset\subset \mathcal{A}$, 
\begin{equation}\int_{\mathbb{S}^2}h(Q,\lambda,p)\varphi_\epsilon'(Qp\cdot p-\eta)\,dp\to \int_{E_Q}h(Q,\lambda,p)\,dp\end{equation}
uniformly on $K$.
\end{proposition}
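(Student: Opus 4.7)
The plan is to mirror the structure of \Cref{propFirstOrderContinuous} but confront one essential new difficulty: here $\varphi_\epsilon'$ converges to $\chi_{(0,\infty)}$ only locally uniformly \emph{away from} $x=0$, not globally, so the uniform-convergence argument used for $\varphi_\epsilon$ in the previous proposition cannot be applied directly. Instead, I would split the sphere into a thin neighbourhood of $\{Qp\cdot p=\eta\}$, where the measure is uniformly small over $K$ by \Cref{lemmaUniformlySmallMeasure}, and its complement, where $\varphi_\epsilon'$ is evaluated on a compact set bounded away from $0$ and therefore converges uniformly.

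Fix $K\subset\subset\mathcal{A}$. By continuity of $h$ and compactness of $K\times\mathbb{S}^2$, $M:=\sup_{K\times\mathbb{S}^2}|h|$ is finite. The set $K':=\{Qp\cdot p-\eta:(Q,\lambda,\eta)\in K,\,p\in\mathbb{S}^2\}$ is a compact subset of $\mathbb{R}$, so by assumption $M':=\sup_{\epsilon>0}\|\varphi_\epsilon'\|_{L^\infty(K')}<\infty$. Given $\epsilon_0>0$, \Cref{lemmaUniformlySmallMeasure} lets me choose $\delta>0$ with
\begin{equation}
\sup_{(Q,\lambda,\eta)\in K}\mathcal{H}^2\bigl(\{p\in\mathbb{S}^2:|Qp\cdot p-\eta|\leq\delta\}\bigr)<\frac{\epsilon_0}{2M(M'+1)}.
\end{equation}
On this set the integrand $|h\,\varphi_\epsilon'(Qp\cdot p-\eta)-h\,\chi_{E_Q}(p)|$ is pointwise at most $M(M'+1)$, so its contribution to the difference of the two integrals is bounded by $\epsilon_0/2$, uniformly in $\epsilon$ and in $(Q,\lambda,\eta)\in K$.

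On the complementary set $\{|Qp\cdot p-\eta|>\delta\}$, the arguments of $\varphi_\epsilon'$ lie in $K'\cap\{|x|\geq\delta\}$, which is a compact subset of $\mathbb{R}\setminus\{0\}$. The hypothesis then gives $\varphi_\epsilon'\to\chi_{(0,\infty)}$ uniformly there, so for all sufficiently small $\epsilon$ the pointwise difference $|\varphi_\epsilon'(Qp\cdot p-\eta)-\chi_{E_Q}(p)|$ is uniformly small enough that the remaining contribution is at most $\epsilon_0/2$. Combining the two estimates yields the required uniform convergence on $K$. The main obstacle throughout is the non-uniform convergence of $\varphi_\epsilon'$ at the origin; the key point is that the offending sets shrink at a rate controllable uniformly across $K$, and this is precisely what \Cref{lemmaUniformlySmallMeasure} supplies, so that everything else reduces to a routine $\epsilon/2$ split.
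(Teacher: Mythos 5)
Your proof is correct and follows essentially the same route as the paper's: both decompose $\mathbb{S}^2$ into the thin set $\{|Qp\cdot p-\eta|<\delta\}$ (controlled uniformly over $K$ by \Cref{lemmaUniformlySmallMeasure}) and its complement (where $\varphi_\epsilon'$ converges uniformly on a compact set away from the origin), pull out the $L^\infty$ bound on $h$, and combine the two estimates. The only cosmetic difference is that you fix $\delta$ in advance given a target $\epsilon_0$, whereas the paper lets both contributions tend to zero; the substance is identical.
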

\begin{proof}
Let $M^+_{Q,\eta,\delta}=\{p \in \mathbb{S}^2: |Qp\cdot p-\eta|\geq\delta\}$ and $M^-_{Q,\eta,\delta}$ denote its complement. Then 
\begin{equation}
\begin{split}
&\left|\int_{\mathbb{S}^2}h(Q,\lambda,p)\varphi_\epsilon'(Qp\cdot p-\eta)\,dp- \int_{E_Q}h(Q,\lambda,p)\,dp\right|\\
=&\left|\int_{\mathbb{S}^2}h(Q,\lambda,p)\varphi_\epsilon'(Qp\cdot p-\eta)\,dp-\varphi_0'(Qp\cdot p-\eta)h(Q,\lambda,p)\,dp\right|\\
\leq & \left|\int_{M^+_{Q,\eta,\delta}}h(Q,\lambda,p)(\varphi_\epsilon'(Qp\cdot p-\eta)-\varphi_0'(Qp\cdot p-\eta)\,dp\right|\\
&+\left|\int_{M^-_{Q,\eta,\delta}}h(Q,\lambda,p)(\varphi_\epsilon'(Qp\cdot p-\eta)-\varphi_0'(Qp\cdot p-\eta)\,dp\right|.
\end{split}
\end{equation}
Since $h$ is continuous and $(Q,\lambda,\eta) \in K$, the $L^\infty$ norm, denoted $||h||_{K,\infty}$, can be pulled out, to give 
\begin{equation}
\begin{split}
&\left|\int_{\mathbb{S}^2}h(Q,\lambda,p)\varphi_\epsilon'(Qp\cdot p-\eta)\,dp- \int_{E_Q}h(Q,\lambda,p)\,dp\right|\\
\leq & ||h||_{K,\infty}\left(\int_{M^+_{Q,\eta,\delta}}|\varphi_\epsilon'(Qp\cdot p-\eta)-\varphi_0'(Qp\cdot p-\eta)|\,dp\right.\\
&+\left.\int_{M^-_{Q,\eta,\delta}}|\varphi_\epsilon'(Qp\cdot p-\eta)-\varphi_0'(Qp\cdot p-\eta)|\,dp\right)\\
\leq & ||h||_{K,\infty}\left(4\pi||\varphi'_\epsilon-\varphi'_0||_{[-L,\delta]\cup[\delta,L],\infty}+\mathcal{H}^2(M^-_{Q,\eta,\delta})||\varphi'_\epsilon-\varphi'_0||_{(-\delta,\delta),\infty}\right)
\end{split}
\end{equation}
for sufficiently large $L>0$. Since $\varphi'_\epsilon \to \varphi'_0$ uniformly on compact sets not containing the origin by assumption, the left-hand term tends to zero. By \Cref{lemmaUniformlySmallMeasure} $\mathcal{H}^2(M^-_{Q,\eta,\delta})$ tends to zero and $||\varphi'_\epsilon-\varphi'_0||_{(-\delta,\delta),\infty}$ remains bounded, so the right-hand term tends to zero also. 
\end{proof}

\begin{proposition}\label{propLimitsGEpsilon}
Let $h :\mathcal{Q}\times \sm3 \times \mathbb{S}^2 \to \mathbb{R}^k$ be $C^1$ in its first two variables, with all derivatives continuous on the entire domain. Let $g_\epsilon : \mathcal{A}\to \mathbb{R}^k$ be given by 
\begin{equation}g_\epsilon(Q,\lambda,\eta)=\int_{\mathbb{S}^2}\varphi_\epsilon(Qp\cdot p-\eta)h(Q,\lambda,p)\,dp.\end{equation}
Then for all $K \subset \subset \mathcal{A}$, 
\begin{equation}
\begin{split}
g_\epsilon(Q,\lambda,\eta)\overset{\text{uni.}}{\to}&\int_{E_Q} (Qp\cdot p-\eta)h(Q,\lambda,p)\,dp\\
\frac{\partial g_\epsilon}{\partial Q}(Q,\lambda,\eta)\overset{\text{uni.}}{\to}& \int_{E_Q}(Qp\cdot p-\eta)\frac{\partial h}{\partial Q}(Q,\lambda,p)+h(Q,\lambda,p)\left(p\otimes p-\frac{1}{3}I\right)\,dp\\
\frac{\partial g_\epsilon}{\partial \lambda} (Q,\lambda,\eta)\overset{\text{uni.}}{\to}& \int_{E_Q}(Qp\cdot p-\eta)\frac{\partial h}{\partial \lambda}(Q,\lambda,p)\\
\frac{\partial g_\epsilon}{\partial \eta} (Q,\lambda,\eta)\overset{\text{uni.}}{\to}&- \int_{E_Q}h(Q,\lambda,p)\,dp
\end{split}
\end{equation}
\end{proposition}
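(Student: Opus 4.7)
My plan is to reduce each of the four uniform convergences to the two results already established in \Cref{propFirstOrderContinuous,propSecondOrderContinuous}. The zeroth-order statement is literally \Cref{propFirstOrderContinuous} applied to the continuous integrand $h$. For the three partial derivatives, I would first justify differentiation under the integral sign: since $\varphi_\epsilon \in C^1(\mathbb{R})$ and $h$ is $C^1$ in $(Q,\lambda)$ with derivatives continuous on $\mathcal{Q}\times\sm3\times\s2$, the integrand is $C^1$ in $(Q,\lambda,\eta)$ with $p$-derivatives uniformly bounded on any compact subset of $\mathcal{A}$, so dominated convergence permits interchange of $\partial$ and $\int_{\s2}$.

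The chain rule for the $Q$-derivative, using that $Q \mapsto Qp\cdot p$ is linear on the traceless subspace $\sm3$ with differential $p\otimes p - \frac{1}{3}I$, gives
\begin{equation*}
\frac{\partial g_\epsilon}{\partial Q}(Q,\lambda,\eta) = \int_{\s2}\varphi'_\epsilon(Qp\cdot p-\eta)\left(p\otimes p -\tfrac{1}{3}I\right)h(Q,\lambda,p)\,dp + \int_{\s2}\varphi_\epsilon(Qp\cdot p-\eta)\frac{\partial h}{\partial Q}(Q,\lambda,p)\,dp.
\end{equation*}
The first term has a $\varphi'_\epsilon$ factor and, with the continuous integrand $\left(p\otimes p -\tfrac{1}{3}I\right)h$, falls directly under \Cref{propSecondOrderContinuous}, with limit $\int_{E_Q}\left(p\otimes p -\tfrac{1}{3}I\right)h\,dp$. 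The second term has a $\varphi_\epsilon$ factor with continuous integrand $\partial h/\partial Q$, so \Cref{propFirstOrderContinuous} applies and yields $\int_{E_Q}(Qp\cdot p-\eta)\partial h/\partial Q\,dp$. Adding these reproduces the claimed formula. The $\lambda$-derivative is simpler because $\varphi_\epsilon$ carries no $\lambda$-dependence, leaving only $\int_{\s2}\varphi_\epsilon\, \partial h/\partial \lambda\,dp$, which is of the form handled by \Cref{propFirstOrderContinuous}. The $\eta$-derivative brings down a single $-\varphi'_\epsilon$ factor and is treated by \Cref{propSecondOrderContinuous} with integrand $h$.

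The only genuine difficulty in this whole convergence statement — the non-uniform convergence of $\varphi'_\epsilon$ to $\chi_{(0,\infty)}$ near the origin, combined with the fact that the domain $E_Q$ itself varies with $Q$ — has already been absorbed into \Cref{lemmaUniformlySmallMeasure} and \Cref{propSecondOrderContinuous}, which control precisely the measure of $\{|Qp\cdot p-\eta|<\delta\}$ uniformly in $(Q,\eta)$ over compact sets. So within the present proposition there is no substantive new obstacle: the proof is purely a matter of computing the partial derivatives, pulling out the $\varphi_\epsilon$ and $\varphi'_\epsilon$ factors, and citing the appropriate preceding proposition for each term.
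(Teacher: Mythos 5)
Your proof is correct and takes essentially the same approach as the paper: compute the partial derivatives of $g_\epsilon$ under the integral sign, then apply \Cref{propFirstOrderContinuous} to the $\varphi_\epsilon$-weighted terms and \Cref{propSecondOrderContinuous} to the $\varphi'_\epsilon$-weighted terms. The paper's proof works out only the $\partial g_\epsilon/\partial Q$ case explicitly and states the others follow similarly, whereas you spell out all four, but the method and the reliance on the two preceding propositions (and through them on \Cref{lemmaUniformlySmallMeasure}) are identical.
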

\begin{proof}
Since $\varphi_\epsilon$ and $h$ are $C^1$ we can exchange derivatives in $g_\epsilon$ with ease. We show only the result for $\frac{\partial g_\epsilon}{\partial Q}$ as an example, with the rest following by the same method.
\begin{equation}
\begin{split}
\frac{\partial g_\epsilon}{\partial Q}(Q,\lambda,\eta)=&\frac{\partial}{\partial Q}\int_{\mathbb{S}^2}h(Q,\lambda,p)\varphi_\epsilon(Qp\cdot p-\eta)\,dp\\
=&\int_{\mathbb{S}^2}\frac{\partial h}{\partial Q}(Q,\lambda,p)\varphi_\epsilon(Qp\cdot p-\eta)+h(Q,\lambda,p)\varphi'_\epsilon(Qp\cdot p-\eta)\left(p\otimes p-\frac{1}{3}I\right)\,dp.
\end{split}
\end{equation}
Using \Cref{propFirstOrderContinuous} for the left-hand term and \Cref{propSecondOrderContinuous} for the right-hand term, on $K$ this converges uniformly to 
\begin{equation}\int_{E_Q} \frac{\partial h}{\partial Q}(Q,\lambda,p)(Qp\cdot p-\eta)\,dp+h(Q,\lambda,p)\left(p\otimes p-\frac{1}{3}I\right)\,dp.\end{equation}
\end{proof}

\begin{corollary}\label{propApproxC1}
Let $h :\mathcal{Q}\times \sm3 \times \mathbb{S}^2\to \mathbb{R}^k$, $C^1$ in its first two variables with all derivatives continuous. Then $g:\mathcal{A}\to\mathbb{R}^k$ given by 
\begin{equation}g(Q,\lambda,\eta)=\int_{E_Q}(Qp\cdot p-\eta)h(Q,\lambda,p)\,dp\end{equation}
is $C^1$, with 
\begin{equation}
\begin{split}
\frac{\partial g}{\partial Q}(Q,\lambda,\eta)=& \int_{E_Q}(Qp\cdot p-\eta)\frac{\partial h}{\partial Q}(Q,\lambda,p)+h(Q,\lambda,p)\left(p\otimes p-\frac{1}{3}I\right)\,dp\\
\frac{\partial g}{\partial \lambda} (Q,\lambda,\eta)=& \int_{E_Q}(Qp\cdot p-\eta)\frac{\partial h}{\partial \lambda}(Q,\lambda,p)\\
\frac{\partial g}{\partial \eta} (Q,\lambda,\eta)=&- \int_{E_Q}h(Q,\lambda,p)\,dp
\end{split}
\end{equation}

\end{corollary}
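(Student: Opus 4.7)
The plan is to invoke \Cref{propLimitsGEpsilon} together with the standard fact that if a sequence of $C^1$ functions converges uniformly on compacts, and the first derivatives also converge uniformly on compacts, then the limit is $C^1$ with the expected derivative. Concretely, for each $\epsilon>0$ the function $g_\epsilon$ is $C^1$ on $\mathcal{A}$, because $\varphi_\epsilon \in C^1(\mathbb{R})$ and $h$ is $C^1$ in its first two arguments with all derivatives continuous, so we may differentiate under the (fixed-domain) integral over $\mathbb{S}^2$ without any boundary terms.

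Next, I would apply \Cref{propLimitsGEpsilon} four times on an arbitrary compact $K\subset\subset \mathcal{A}$: once to see that $g_\epsilon \to g$ uniformly on $K$ (with the limit $g$ being the desired integral over $E_Q$), and then three more times to obtain uniform convergence of $\frac{\partial g_\epsilon}{\partial Q}$, $\frac{\partial g_\epsilon}{\partial \lambda}$ and $\frac{\partial g_\epsilon}{\partial \eta}$ on $K$ to the three integrals stated in the corollary. Note that these limit expressions make sense as continuous functions of $(Q,\lambda,\eta) \in \mathcal{A}$, again by \Cref{propFirstOrderContinuous,propSecondOrderContinuous} applied to the appropriate continuous integrands.

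Finally I would pass to the limit. Fix any $(Q_0,\lambda_0,\eta_0)\in\mathcal{A}$ and any compact neighbourhood $K$ of this point inside $\mathcal{A}$. On $K$, each $g_\epsilon$ is $C^1$, the sequence $g_\epsilon$ converges uniformly, and the three partial derivatives converge uniformly to continuous limits. The standard analysis lemma (uniform convergence of $C^1$ functions with uniformly converging derivatives yields a $C^1$ limit whose derivative is the limit of derivatives) then gives $g \in C^1(K)$ with the identified partial derivatives. Since $K$ was an arbitrary compact neighbourhood of an arbitrary point of $\mathcal{A}$, $g\in C^1(\mathcal{A})$ with the stated formulas.

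The main technical content has already been absorbed into the earlier propositions; the only mild obstacle here is to be careful that the limit expressions for the derivatives are themselves continuous on $\mathcal{A}$, which is what allows the pointwise application of the standard $C^1$-limit lemma. This continuity follows from applying \Cref{propFirstOrderContinuous} (for the $Q$ and $\lambda$ derivatives, which still contain the factor $Qp\cdot p-\eta$) and \Cref{propSecondOrderContinuous} (for the $\eta$ derivative, where the vanishing factor has been removed), so no further work is required beyond citing the previously established results.
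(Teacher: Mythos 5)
Your proposal is correct and follows essentially the same route as the paper: cite \Cref{propLimitsGEpsilon} to get locally uniform convergence of $g_\epsilon$ and its three partial derivatives, and then invoke the standard $C^1$-limit lemma. The only difference is that you spell out a few more of the intermediate remarks (each $g_\epsilon$ is $C^1$, the candidate limit derivatives are continuous) that the paper leaves implicit.
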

\begin{proof}
Using \Cref{propLimitsGEpsilon}, we have that on all compact subsets of $\mathcal{A}$, $g_\epsilon$ is a Cauchy sequence in $C^1$, and $g_\epsilon \to g$ in $C^0_{\text{loc}}$, therefore $g \in C^1$, and its derivatives are given by the locally uniform limits of the derivatives of $g$. 
\end{proof}

\begin{proposition}\label{propJC1}
$\Lambda$ is a $C^1$ function of $(Q,\eta)$ on $\{(Q,\eta)\in\mathcal{Q}\times\mathbb{R}:|Q|^2>\eta\}$.
\end{proposition}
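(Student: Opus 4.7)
The plan is to derive $C^1$ regularity of $\Lambda$ from an implicit function theorem argument applied to the identity $G(Q,\Lambda(Q,\eta),\eta)=0$, where $G$ is the function defined just before the statement. Concretely, I will first establish that $G$ is $C^1$ on $\mathcal{A}\times\sm3$, then compute the partial derivative $\partial_\lambda G$ at $(Q,\Lambda(Q,\eta),\eta)$ and show it is invertible, and finally invoke the implicit function theorem to conclude that $\Lambda$ is $C^1$ in $(Q,\eta)$.

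For the first step, write $G(Q,\lambda,\eta)=Q-N(Q,\lambda,\eta)/D(Q,\lambda,\eta)$ with
\begin{equation*}
N(Q,\lambda,\eta)=\int_{E_Q}(Qp\cdot p-\eta)\exp(\lambda p\cdot p)\bigl(p\otimes p-\tfrac{1}{3}I\bigr)\,dp,\qquad D(Q,\lambda,\eta)=\int_{E_Q}(Qp\cdot p-\eta)\exp(\lambda p\cdot p)\,dp.
\end{equation*}
Both $N$ and $D$ are of the form handled by \Cref{propApproxC1}, with $h(Q,\lambda,p)=\exp(\lambda p\cdot p)(p\otimes p-\tfrac{1}{3}I)$ and $h(Q,\lambda,p)=\exp(\lambda p\cdot p)$ respectively, each $C^1$ in $(Q,\lambda)$ with continuous derivatives. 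Hence $N$ and $D$ are $C^1$ on $\mathcal{A}\times\sm3$. Moreover, $D>0$ on $\mathcal{A}\times\sm3$, since the integrand is strictly positive on the non-empty open set $E_Q$, so $G$ is $C^1$.

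For the second step, differentiate $G$ in $\lambda$ in direction $A\in\sm3$. Since $E_Q$ does not depend on $\lambda$, a direct computation using $\partial_\lambda D = N$ and the identity $N/D=Q$ at $\lambda=\Lambda(Q,\eta)$ gives, for any $A,B\in\sm3$,
\begin{equation*}
(-\partial_\lambda G\cdot A)\cdot B = \int_{\s2}\bigl(Ap\cdot p - A\cdot Q\bigr)\bigl(Bp\cdot p - B\cdot Q\bigr)f_Q(p)\,dp,
\end{equation*}
which is a covariance form on $\sm3$. This is positive semidefinite, and for $A=B\neq 0$ it is strictly positive: if the integrand vanished $f_Q$-a.e., then $p\mapsto Ap\cdot p$ would equal the constant $A\cdot Q$ on the open set $E_Q\supset \supp(f_Q)$, forcing $A\in\mathbb{R}I\cap\sm3=\{0\}$ by analyticity and tracelessness. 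Thus $\partial_\lambda G|_{\lambda=\Lambda(Q,\eta)}$ is an invertible linear map on $\sm3$.

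Combining these two steps, the implicit function theorem applied at each $(Q,\eta)$ in the open set $\{|Q|^2>\eta\}\cap(\mathcal{Q}\times\mathbb{R})$ yields a $C^1$ solution $\tilde\Lambda(Q,\eta)$ of $G(Q,\tilde\Lambda,\eta)=0$ near that point. By the uniqueness of the dual maximiser from \Cref{propFormOfJ}, $\tilde\Lambda$ must coincide with $\Lambda$ locally, so $\Lambda\in C^1$ on the whole domain. The main technical obstacle is precisely the $C^1$ regularity of $G$, since the domain of integration $E_Q$ varies with $Q$; this was already absorbed into \Cref{propApproxC1} via the mollification argument using \Cref{lemmaUniformlySmallMeasure}, so once that machinery is in hand the remainder is a routine application of the implicit function theorem.
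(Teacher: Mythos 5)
Your proposal is correct and follows essentially the same route as the paper: apply the implicit function theorem to $G(Q,\Lambda,\eta)=0$, with $C^1$ regularity of $G$ supplied by \Cref{propApproxC1} and invertibility of $\partial_\lambda G$ established via positive definiteness of the covariance form together with a pseudo-Haar/analyticity argument. Your version is in fact slightly more careful in one spot — you correctly note that the degeneracy condition is $Ap\cdot p$ constant (equal to $A\cdot Q$) on $\supp f_Q$, whereas the paper loosely writes $Ap\cdot p = 0$ — but the substance and structure are the same.
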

\begin{proof}
For notational brevity, let $\Lambda=\Lambda(Q,\eta)$ when unambiguous. For each $Q \in\{(Q,\eta)\in\mathcal{Q}\times\mathbb{R}:|Q|^2>\eta\}$, $\Lambda(Q,\eta)$ is uniquely determined, so being globally ill defined is not an issue. The argument will only be needed to show that the map is $C^1$. To see this, note that 
\begin{equation}G(Q,\Lambda,\eta)=Q-\frac{1}{\int_{E_Q}\exp(\Lambda p\cdot p)(Qp\cdot p-\eta)\,dp}\int_{E_Q}\exp(\Lambda p\cdot p)(Qp\cdot p-\eta)\left(p\otimes p-\frac{1}{3}I\right)\,dp=0\end{equation}
for all $Q,\Lambda(Q,\eta),\eta$. In particular, note that both terms in the quotient are $C^1$ in $(Q,\Lambda,\eta)$ by \Cref{propApproxC1}, so that $G$ is $C^1$. The invertibility of $\frac{\partial G}{\partial \Lambda}$ comes from \Cref{eqVariance} at $(Q,\Lambda,\eta)$, with $f_Q(p)=\frac{1}{Z}\exp(\Lambda p\cdot p)\max(Qp\cdot p-\eta,0)$ as before. If $A \in \sm3$ then 
\begin{equation}
\begin{split}
\frac{\partial G}{\partial \Lambda}(Q,\lambda)A\cdot A=&\int_{\mathbb{S}^2} (Ap\cdot p)^2 f_Q(p)\,dp-\left(\int_{\mathbb{S}^2}Ap\cdot p f_Q(p)\,dp\right)^2,
\end{split}
\end{equation}
which by Cauchy-Schwarz is positive unless $Ap\cdot p=0$ on $\supp(f)$, however this implies that $A=0$ since the linearly independent components of $p\otimes p-\frac{1}{3}I$ and the constant function are pseudo-Haar on the sphere with respect to $\mathcal{H}^2$, and $f$ is absolutely continuous with respect to $\mathcal{H}^2$ (see \Cref{propAdmissibleQ}). Therefore $\Lambda$ is a $C^1$ function of $(Q,\eta)$.
\end{proof}

\begin{corollary}\label{corollaryfContinuous}
The map from $Q$ to its optimal energy distribution is continuous with respect to the $W^{1,q}(\s2)$ topology for all $q \in [1,\infty)$, and by extension with respect to $C^\alpha(\s2)$ for $\alpha <1$ and $L^q(\s2)$ for $q \in [1,\infty]$. 
\end{corollary}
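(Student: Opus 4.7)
The plan is to work directly with the explicit representation $f_Q(p) = \frac{1}{Z(Q)} \exp(\Lambda(Q)p\cdot p)\max(Qp\cdot p - \eta, 0)$ given in \Cref{propFormOfJ}, exploiting the $C^1$ regularity of $\Lambda$ from \Cref{propJC1} together with the uniform area estimate from \Cref{lemmaUniformlySmallMeasure}. Fix $Q \in \text{dom}(J)$ and a sequence $Q_n \to Q$ in $\text{dom}(J)$, which is open. Uniform convergence $f_{Q_n} \to f_Q$ on $\s2$ follows directly, since $\Lambda(Q_n) \to \Lambda(Q)$ by continuity of $\Lambda$, the map $p \mapsto \exp(\Lambda p\cdot p)\max(Qp\cdot p - \eta,0)$ depends continuously on $(Q,\Lambda)$ uniformly in $p$, and the normalising constant $Z(Q)$ is continuous in $Q$. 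This already gives continuity in $L^q(\s2)$ for every $q \in [1,\infty]$.

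The main work is establishing $W^{1,q}$-continuity for $q < \infty$. Since $x\mapsto\max(x,0)$ is Lipschitz with weak derivative $\chi_{(0,\infty)}$, and the other factors are smooth in $p$, the weak spherical gradient of $f_Q$ takes the form
\begin{equation*}
\nabla_{\s2} f_Q(p) = \chi_{E_Q}(p)\, \Phi(Q,\Lambda(Q),p),
\end{equation*}
where $\Phi$ is continuous (in fact smooth) in all arguments. For $Q_n \to Q$, continuity of $\Lambda$ gives $\Phi(Q_n,\Lambda(Q_n),\cdot) \to \Phi(Q,\Lambda(Q),\cdot)$ uniformly on $\s2$. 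The key step is $L^q$ convergence of the characteristic functions: since there is a constant $C$ (depending on a compact neighbourhood of $Q$) with $E_{Q_n} \triangle E_Q \subset \{p \in \s2 : |Qp\cdot p - \eta| \leq C\|Q_n-Q\|\}$, \Cref{lemmaUniformlySmallMeasure} yields $\mathcal{H}^2(E_{Q_n} \triangle E_Q) \to 0$, and hence $\chi_{E_{Q_n}} \to \chi_{E_Q}$ in $L^q(\s2)$ for every $q < \infty$. Combining uniform convergence of the smooth coefficient with $L^q$ convergence of the indicators produces $\nabla f_{Q_n} \to \nabla f_Q$ in $L^q(\s2)$ for every $q \in [1,\infty)$.

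Together these give $f_{Q_n} \to f_Q$ in $W^{1,q}(\s2)$ for all $q \in [1,\infty)$. Continuity with respect to $C^\alpha(\s2)$ for $\alpha<1$ then follows from the Morrey-type Sobolev embedding $W^{1,q}(\s2) \hookrightarrow C^{1-2/q}(\s2)$ by taking $q > 2/(1-\alpha)$, while $L^q$-continuity for $q \in [1,\infty]$ has already been recorded from the uniform convergence in the first paragraph. The essential obstacle is that $W^{1,\infty}$-continuity genuinely fails, since $\nabla f_Q$ jumps across $\partial E_Q$; consequently the argument must avoid any attempt at uniform pointwise control of the gradient and instead trade uniform control for measure-theoretic control of the symmetric difference $E_{Q_n}\triangle E_Q$, which is exactly what \Cref{lemmaUniformlySmallMeasure} supplies.
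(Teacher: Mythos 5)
Your proof is correct and follows essentially the same route as the paper: write the weak gradient as a continuous coefficient times $\chi_{E_Q}$, get uniform convergence of the smooth part from the $C^1$ regularity of $\Lambda$, and handle the indicator term by $L^1$-convergence. The one place you diverge is actually an improvement: the paper justifies $\chi_{E_{Q_n}}\to\chi_{E_Q}$ in $L^1$ by appealing only to continuity of $Q\mapsto\mathcal H^2(E_Q)$, which on its own does not control $\mathcal H^2(E_{Q_n}\triangle E_Q)$; your inclusion $E_{Q_n}\triangle E_Q\subset\{|Qp\cdot p-\eta|\leq C\|Q_n-Q\|\}$ together with \Cref{lemmaUniformlySmallMeasure} (or, for fixed $Q$, just downward monotone convergence to $\{Qp\cdot p=\eta\}$) supplies the missing step cleanly.
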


\begin{proof}
First we show the continuity in $W^{1,q}(\mathbb{S}^2)$, from which the other results will follow. Let $Q \in \mathcal{Q}\cap\text{dom}(J)$. $f_Q$ is $C^1$ everywhere except where $Qp\cdot p=\eta$, which is a set of zero measure, so it certainly admits a weak derivative. Then using $\nabla_{\mathbb{S}^2}$ to denote the (weak) gradient operator on $\mathbb{S}^2$, 
\begin{equation}\label{equationDerivativeFQ}
\nabla_{\mathbb{S}^2}f_Q(p)=\frac{1}{Z}\exp(\Lambda p\cdot p)\left(\max(Qp\cdot p-\eta,))\Lambda_{\alpha\beta}+\chi_{(0,\infty)}(Qp\cdot p-\eta)Q_{\alpha\beta}\right)\nabla_{\mathbb{S}^2}p_\alpha p_\beta,
\end{equation}
with summation over $\alpha,\beta$. Now if $Q_i \to Q$, and $f_{Q_i}(p)=\frac{1}{Z_i}\exp(\Lambda_i p\cdot p)\max(Q_ip\cdot p-\eta,0)$, then we see that all terms in \Cref{equationDerivativeFQ} converge uniformly, with the exception of $\chi_{(0,\infty)}(Q_ip\cdot p-\eta)$. It therefore suffices to show that if $Q_i \to Q$, then $\chi_{(0,\infty)}(Q_ip\cdot p-\eta)$ converges to $\chi_{(0,\infty)}(Qp\cdot p-\eta)$ in $L^q(\mathbb{S}^2)$. Since these functions only admit values $0,1$ however, it suffices to show that the convergence holds in $L^1(\mathbb{S}^2)$. However by taking $h(Q,\lambda,p)=1$ in \Cref{propApproxC1}, we see that this holds since the function 
\begin{equation}
Q\mapsto \int_{E_Q} \,dp
\end{equation}
is continuous. Therefore if $Q_i \to Q$, then $f_{Q_i}\to f_{Q}$ in $W^{1,q}(\mathbb{S}^2)$ for all $q \in [1,\infty)$, and in particular the convergence also holds in $C^\alpha(\s2)$ for all $\alpha<1$, and in $L^q(\s2)$ for all $q \in [1,\infty]$. 
\end{proof}

\begin{proposition}\label{propJDerivatives}
$J$ is $C^1$ on its domain, with its derivatives given by 
\begin{equation}
\begin{split}
\frac{\partial J}{\partial Q}=&\Lambda(Q,\eta) - \int_{E_Q}\frac{f_Q(p)}{Qp\cdot p-\eta}\left(p\otimes p-\frac{1}{3}I\right)\,dp,\\
\frac{\partial J}{\partial \eta}=&-\int_{E_Q}\frac{f_Q(p)}{Qp\cdot p-\eta}\,dp,
\end{split}
\end{equation}
where $f_Q$ is the optimal energy distribution for $Q$.
\end{proposition}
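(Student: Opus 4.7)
The plan is to exploit the dual representation $J(Q,\eta) = F(Q, \Lambda(Q,\eta), \eta)$ from \Cref{propFormOfJ} together with the envelope theorem, leveraging the smoothness results already proved. By \Cref{propJC1}, the map $(Q,\eta) \mapsto \Lambda(Q,\eta)$ is $C^1$, and by \Cref{propApproxC1} applied with $h(Q,\lambda,p) = \exp(\lambda p\cdot p)$, the integral $N(Q,\lambda,\eta) = \int_{E_Q}\exp(\lambda p\cdot p)(Qp\cdot p-\eta)\,dp$ is $C^1$ on $\mathcal{A}$ with explicit partial derivatives. Since $N$ is strictly positive on $\text{dom}(J)$, the composition $F(Q,\lambda,\eta) = \lambda \cdot Q - \ln N(Q,\lambda,\eta)$ is $C^1$ jointly in all arguments, and hence $J = F(\cdot,\Lambda(\cdot,\cdot),\cdot)$ is $C^1$ as a composition of $C^1$ maps.

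To extract the derivative formulas I would invoke the envelope theorem. Because $\Lambda(Q,\eta)$ is the unique maximiser of $\lambda \mapsto F(Q,\lambda,\eta)$ on the finite-dimensional space $\sm3$, the first-order stationarity condition $\frac{\partial F}{\partial \lambda}(Q,\Lambda(Q,\eta),\eta) = 0$ holds identically in $(Q,\eta)$ on $\text{dom}(J)$. Differentiating $J(Q,\eta) = F(Q,\Lambda(Q,\eta),\eta)$ via the chain rule, the terms involving $\partial_Q \Lambda$ and $\partial_\eta \Lambda$ are multiplied by $\frac{\partial F}{\partial \lambda}$ evaluated at $(Q,\Lambda,\eta)$ and therefore vanish, leaving $\frac{\partial J}{\partial Q} = \frac{\partial F}{\partial Q}(Q,\Lambda,\eta)$ and $\frac{\partial J}{\partial \eta} = \frac{\partial F}{\partial \eta}(Q,\Lambda,\eta)$.

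Finally I would compute these two partials of $F$ directly from \Cref{propApproxC1} and simplify using the identity $\frac{f_Q(p)}{Qp\cdot p - \eta} = \frac{1}{Z}\exp(\Lambda(Q,\eta) p\cdot p)$ valid on $E_Q$, where $Z$ is the normalising constant from \Cref{propFormOfJ}, which is precisely $N(Q,\Lambda,\eta)$. The ratio structure $-\frac{1}{N}\frac{\partial N}{\partial Q}$ and $-\frac{1}{N}\frac{\partial N}{\partial \eta}$ appearing after differentiating $\ln N$ then produces exactly the claimed expressions. The substantive analytic work (differentiating integrals over the moving domain $E_Q$ and the implicit function theorem for $\Lambda$) has already been packaged in \Cref{propApproxC1} and \Cref{propJC1}, so there is no genuine obstacle here; the one subtlety worth noting is that the integrand $Qp\cdot p - \eta$ vanishes on $\partial E_Q$, which is exactly why \Cref{propApproxC1} produces no boundary contribution and the envelope argument closes cleanly.
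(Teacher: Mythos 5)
Your proposal is correct and follows essentially the same route as the paper: establish $C^1$ regularity of $J=F(\cdot,\Lambda(\cdot,\cdot),\cdot)$ from \Cref{propJC1} and \Cref{propApproxC1}, then apply the envelope argument (the dual optimality condition $\partial F/\partial\lambda=0$ at $\Lambda(Q,\eta)$) to kill the $\partial\Lambda/\partial Q$ and $\partial\Lambda/\partial\eta$ terms, and simplify using $f_Q(p)/(Qp\cdot p-\eta)=\frac{1}{Z}\exp(\Lambda p\cdot p)$ on $E_Q$. The paper phrases the envelope step as ``the derivative with respect to $\Lambda$ vanishes by the dual optimality condition'' rather than naming the envelope theorem, but the content is identical.
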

\begin{proof}
Recalling that $J(Q,\eta)=F(Q,\Lambda(Q,\eta),\eta)$, with $\Lambda$ a $C^1$ function by \Cref{propJC1} and $F$ $C^1$ from \Cref{propApproxC1} gives that $J$ is $C^1$. Since it is known that $J$ and $\Lambda$ are $C^1$ functions of $Q$, it is straightforward to differentiate the expression for $J$. For brevity denote 
\begin{equation}Z=\int_{E_Q}\exp(\Lambda(Q) p\cdot p)(Qp\cdot p-\eta)\,dp.\end{equation}
Then the derivatives of $J$ can be found as
\begin{equation}
\begin{split}
\frac{\partial J}{\partial Q}=&\frac{\partial}{\partial Q} \left(\Lambda\cdot Q - \ln \left(\int_{E_Q}\exp(\Lambda p\cdot p)(Qp\cdot p-\eta)\,dp\right)\right)\\
=&\Lambda -\frac{1}{Z}\int_{E_Q}\exp(\Lambda p\cdot p)\left(p\otimes p-\frac{1}{3}I\right)\,dp+\frac{\partial \Lambda}{\partial Q}\cdot\frac{\partial}{\partial \Lambda} \left(Q\cdot \Lambda-\int_{E_Q}\exp(\Lambda p\cdot p)(Qp\cdot p-\eta)\,dp\right)\\
=&\Lambda - \int_{E_Q}\frac{f_Q(p)}{Qp\cdot p-\eta}\left(p\otimes p-\frac{1}{3}I\right)\,dp.
\end{split}
\end{equation}
Note that the derivative with respect to $\Lambda$ vanishes by the dual optimality condition. By the same argument,
\begin{equation}
\begin{split}
\frac{\partial J}{\partial \eta}=&  \frac{\partial \Lambda}{\partial \eta}\cdot \left(Q-\frac{1}{Z}\int_{E_Q}\exp(\Lambda p\cdot p-\eta)(Qp\cdot p-\eta)\left( p\otimes p-\frac{1}{3}I\right)\,dp\right)+\frac{1}{Z}\int_{E_Q}\exp(\Lambda p\cdot p)\,dp\\
=& \frac{\partial \Lambda}{\partial \eta} \left(Q-\int_{E_Q}f_Q(p)\left(p\otimes p-\frac{1}{3}I\right)\,dp\right)+\int_{E_Q}\frac{f_Q(p)}{Qp\cdot p-\eta}\,dp\\
=&\int_{E_Q}\frac{f_Q(p)}{Qp\cdot p-\eta}\,dp.
\end{split}
\end{equation}
\end{proof}

\begin{theorem}\label{theoremGlobalMinsEquivalent}
Let $f \in \ps2$ be a global minimiser for $\mathcal{F}$. Then $f$ satisfies the Euler-Lagrange equation given by 
\begin{equation}
\begin{split}
f(p)=&\frac{1}{Z}\exp(\Lambda p\cdot p)\max(Qp\cdot p-\eta,0),\\
Q=& \int_{\mathbb{S}^2}f(p)\left(p\otimes p-\frac{1}{3}I\right)\,dp,\\
\Lambda =& \int_{\mathbb{S}^2}\frac{f(p)}{Qp\cdot p-\eta}\left(p\otimes p-\frac{1}{3}I\right)\,dp.
\end{split}
\end{equation}
Furthermore, $Q,\Lambda$ solve the saddle point problem 
\begin{equation}\min\limits_{Q_0\in\text{dom}(J)}\max\limits_{\lambda\in\sm3}Q_0\cdot \lambda-\ln\left(\int_{\mathbb{S}^2}\exp(\lambda p\cdot p)\max(Q_0 p\cdot p-\eta,0)\,dp\right),\end{equation}
and $Q$ satisfies $J(Q)=\min\limits_{Q_0 \in \mathcal{Q}}J(Q_0)$.
\end{theorem}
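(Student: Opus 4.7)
The plan is to synthesize the machinery developed throughout \Cref{secAuxiliaryProblem}. The key observation is the decomposition
\[
\min_{f \in \ps2}\mathcal{F}(f) = \min_{Q \in \text{dom}(J)}\left(\min_{f \in A(Q)}\mathcal{F}(f)\right) = \min_{Q \in \text{dom}(J)} J(Q),
\]
so that a global minimiser $f$ of $\mathcal{F}$ must have a Q-tensor $Q$ that globally minimises $J$, while $f$ itself must coincide with the unique optimal distribution $f_Q$ in $A(Q)$ identified in \Cref{propFormOfJ}. This immediately yields the first equation $f(p)=\frac{1}{Z}\exp(\Lambda p\cdot p)\max(Qp\cdot p-\eta,0)$ with $\Lambda=\Lambda(Q)$ the maximiser of the dual objective $F(Q,\cdot)$, and the second equation is simply the definition of the Q-tensor associated to $f$.

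To extract the third equation, I would use the first-order optimality condition for the finite-dimensional problem $\min_{Q\in\text{dom}(J)}J(Q)$. Since $\text{dom}(J)$ is open in $\sm3$ by \Cref{propAdmissibleQ} and $J$ blows up uniformly at the boundary of its domain by \Cref{propJBlowUp}, the minimiser $Q$ must lie in the interior of $\text{dom}(J)$, and by the $C^1$ regularity of $J$ established in \Cref{propJDerivatives} we have $\partial J/\partial Q = 0$. The explicit formula for $\partial J/\partial Q$ in \Cref{propJDerivatives} rearranges to
\[
\Lambda = \int_{E_Q}\frac{f_Q(p)}{Qp\cdot p-\eta}\left(p\otimes p-\frac{1}{3}I\right)\,dp,
\]
which is the third equation, after observing that $f$ vanishes a.e.\ outside $E_Q$ so the integral may equivalently be taken over all of $\s2$.

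The saddle-point identity then follows with no additional work. By \Cref{propFormOfJ}, $J(Q_0)=\max_{\lambda\in\sm3}F(Q_0,\lambda)$ for every $Q_0\in\text{dom}(J)$, so minimising $J$ over $\text{dom}(J)$ is by definition the outer minimisation in the stated min-max problem, and the inner maximiser at the optimal $Q$ is precisely $\Lambda=\Lambda(Q)$. The last claim $J(Q)=\min_{Q_0\in\mathcal{Q}}J(Q_0)$ is then just the extension of the minimum from $\text{dom}(J)$ to $\mathcal{Q}$, which is harmless because $J\equiv+\infty$ on $\mathcal{Q}\setminus\text{dom}(J)$.

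I do not anticipate a substantial obstacle: all the heavy lifting (the convex reformulation on $A(Q)$, the duality analysis, the $C^1$ regularity of $J$, and the boundary blow-up preventing loss of minimisers) has already been done. The only delicate point is justifying the use of the critical-point condition $\partial J/\partial Q=0$, and this is secured by the combined application of \Cref{propJBlowUp} and \Cref{propJDerivatives}. The theorem is really a clean packaging of the preceding propositions.
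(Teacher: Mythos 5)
Your proposal follows exactly the paper's argument: decompose the minimisation via $J$, invoke openness of $\text{dom}(J)$ (\Cref{propAdmissibleQ}) and $C^1$ regularity (\Cref{propJDerivatives}) to get $\partial J/\partial Q=0$ at the minimiser, identify $f=f_Q$ from \Cref{propFormOfJ}, and read off the saddle-point representation. The citation of \Cref{propJBlowUp} is a small extra safety remark not present in the paper's proof, but otherwise the reasoning and ingredients coincide.
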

\begin{proof}
From the minimisation decomposition, $\min\limits_{f \in \ps2}\mathcal{F}(f)=\min\limits_{Q \in \mathcal{Q}}\left(\min\limits_{f \in A(Q)}\mathcal{F}(f)\right)=\min\limits_{Q \in \mathcal{Q}}J(Q)$, first we minimise $J$. Since $\text{dom}(J)$ is open, and $J$ is $C^1$ on its domain, this implies that at the global minimiser, $\frac{\partial J}{\partial Q}(Q)=0$. Therefore 
\begin{equation}\Lambda(Q)-\int_{\mathbb{S}^2}\frac{f_{Q}(p)}{Qp\cdot p-\eta}\left(p\otimes p-\frac{1}{3}I\right)\,dp=0\end{equation}
is the critical point condition for a global minimiser $Q$ of $J$. If $Q$ is a minimiser of $J$, then $f_{Q}$ must also be a global minimiser of $\mathcal{F}$, and the unique minimiser with corresponding Q-tensor $Q$ by uniqueness of solutions to the auxiliary problem. In particular, this means that $f_{Q}$ can be written in the form given in the statement. The saddle-point representation is a direct consequence of \Cref{propFormOfJ}.
\end{proof}

\begin{remark}
The saddle-point representation of the global minimisation problem is at face-value a 10-dimensional problem. However, from \Cref{propQLambdaEigenvectors}, it is known that all eigenvectors of the minimising $Q$ are eigenvectors of $\Lambda(Q)$. Therefore using the frame indifference of the energy, it is possible to fix both into the same diagonal frame, leaving only two degrees of freedom each, so that the problem is only four-dimensional. Furthermore, using a similar argument to \Cref{corollaryfContinuous}, if $(Q_0,\lambda_0)\approx (Q^*,\Lambda(Q)^*))$, where $Q^*$ is a true global minimiser of $J$, then $f_0(p)=\frac{1}{Z}\exp(\lambda_0p\cdot p)\max(Q_0p\cdot p-\eta,0)$ is close in $L^\infty(\s2)$ to a true global minimiser of $\mathcal{F}$, providing a degree of stability in the approximation. Finally it should be noted that the necessary condition for minimisers given in \Cref{theoremGlobalMinsEquivalent} differs only from the vanishing variation condition on $\mathcal{P}_+(\s2)$ that can be obtained from \Cref{propIsotropicGeneral} in as far as the support is unknown. This may suggest that the method presented in this work could be equivalent to considering more careful variations, such as solving $\left.\frac{d}{dt}\mathcal{F}(f_t)\right|_{t=0}=0$ for $f:(-\epsilon,\epsilon) \to \mathcal{P}(\s2)$ smoothly varying in $\text{dom}(\mf)$. This will be addressed more thoroughly in \Cref{propSmoothCurveVariation}.
\end{remark}

We can establish the behaviour of systems as they approach the saturated regime, proving that they achieve perfect order in the appropriate limit. The result requires two ingredients, firstly that we can bound the support of global minimisers onto some small set, and secondly that solutions have even symmetry. First we include a necessary lemma.

\begin{lemma}\label{lemmaLimitingQ}
Let $Q_j \in \mathcal{Q}$, $|Q_j|^2 \to \frac{2}{3}$. Then there exists rotations $R_j \in \text{SO}(3)$ such that $R_jQ_jR_j^T \to \left(e_1\otimes e_1-\frac{1}{3}I\right)$. In particular, as $\eta \to \frac{2}{3}$ from below, the corresponding minimisers $Q_j$ of $J$ satisfy $R_jQ_jR_j^T \to e_1\otimes e_1-\frac{1}{3}I$.
\end{lemma}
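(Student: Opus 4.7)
The plan is to characterise the set of $Q \in \overline{\mathcal{Q}}$ with $|Q|^2 = \frac{2}{3}$ as precisely the $\mathrm{SO}(3)$-orbit of $e_1\otimes e_1-\frac{1}{3}I$, and then extract convergent subsequences by compactness. The second assertion then follows immediately from the first: any minimiser $Q_j$ of $J(\cdot,\eta_j)$ must lie in $\mathrm{dom}(J(\cdot,\eta_j)) = \mathcal{Q}\cap \{|Q|^2>\eta_j\}$ by \Cref{propAdmissibleQ}, and because $|Q|^2<\frac{2}{3}$ throughout the open set $\mathcal{Q}$ (as verified below), the inequalities $\eta_j < |Q_j|^2 < \frac{2}{3}$ force $|Q_j|^2\to \frac{2}{3}$.

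For the characterisation I would parametrise the ordered eigenvalues $q_1\geq q_2\geq q_3$ of $Q\in\mathcal{Q}$ by the nonnegative gaps $\lambda = q_1 - q_2$ and $\mu = q_2 - q_3$, so that tracelessness gives $q_1 = \frac{2\lambda+\mu}{3}$, $q_2 = \frac{\mu-\lambda}{3}$, $q_3 = -\frac{\lambda+2\mu}{3}$, and a direct expansion yields
\[
|Q|^2 = \tfrac{2}{3}\bigl(\lambda^2+\lambda\mu+\mu^2\bigr),
\]
while the eigenvalue constraint $q_3 \geq -\frac{1}{3}$ becomes $\lambda + 2\mu \leq 1$. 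Since in the first quadrant
\[
(\lambda+2\mu)^2 - (\lambda^2+\lambda\mu+\mu^2) = 3\mu(\lambda+\mu) \geq 0,
\]
one obtains $|Q|^2 \leq \tfrac{2}{3}(\lambda+2\mu)^2 \leq \tfrac{2}{3}$, with equality only when both $\mu=0$ and $\lambda+2\mu = 1$, i.e.\ $(\lambda,\mu)=(1,0)$. This corresponds to the eigenvalue multiset $\{\tfrac{2}{3},-\tfrac{1}{3},-\tfrac{1}{3}\}$, and the strict inequality $|Q|^2 < \tfrac{2}{3}$ on the open set $\mathcal{Q}$ is recovered from $\lambda+2\mu<1$ there.

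Given the characterisation, the first assertion is a straightforward compactness argument. The ordered eigenvalue triples $(q_1^{(j)},q_2^{(j)},q_3^{(j)})$ lie in the compact set $[-\tfrac{1}{3},\tfrac{2}{3}]^3$, so any subsequence admits a convergent sub-subsequence whose limit satisfies the eigenvalue constraints with $|Q^\ast|^2 = \tfrac{2}{3}$, hence equals $(\tfrac{2}{3},-\tfrac{1}{3},-\tfrac{1}{3})$ by the characterisation. Uniqueness of subsequential limits promotes this to convergence of the whole sequence of ordered eigenvalues. For each $j$ I would choose any $R_j\in\mathrm{SO}(3)$ diagonalising $Q_j$ with its eigenvalues in decreasing order along the diagonal, whereupon
\[
R_j Q_j R_j^T = \mathrm{diag}\bigl(q_1^{(j)},q_2^{(j)},q_3^{(j)}\bigr) \to \mathrm{diag}\bigl(\tfrac{2}{3},-\tfrac{1}{3},-\tfrac{1}{3}\bigr) = e_1\otimes e_1 - \tfrac{1}{3}I.
\]
The ambiguity in $R_j$ coming from the near-degeneracy of $q_2^{(j)}$ and $q_3^{(j)}$ is harmless since the right-hand side depends only on the diagonal entries. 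There is no substantive obstacle here; the only step with any content is the eigenvalue characterisation via the gaps $(\lambda,\mu)$, everything else being compactness and bookkeeping.
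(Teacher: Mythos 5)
Your argument is correct and structurally mirrors the paper's: both first establish that $|Q|^2\le\tfrac{2}{3}$ on $\overline{\mathcal{Q}}$ with equality only at the uniaxial points $e\otimes e-\tfrac{1}{3}I$, and then conclude by compactness. Where you diverge is in how you prove the characterisation. The paper observes that a maximiser of $|Q|^2$ on the compact set $\overline{\mathcal{Q}}$ cannot be interior (interior points scale outwards), restricts attention to the boundary face where $v_{\min}=-\tfrac{1}{3}$, and optimises a one-parameter quadratic $D(-\tfrac{1}{3},a,\tfrac{1}{3}-a)$. You instead parametrise by the nonnegative eigenvalue gaps $(\lambda,\mu)$ and prove the closed-form inequality $|Q|^2=\tfrac{2}{3}(\lambda^2+\lambda\mu+\mu^2)\le\tfrac{2}{3}(\lambda+2\mu)^2\le\tfrac{2}{3}$ via the identity $3\mu(\lambda+\mu)\ge 0$; this yields the bound, the equality case $(\lambda,\mu)=(1,0)$, and the strict inequality on the open set $\mathcal{Q}$ in one algebraic stroke, without a separate argument for where the maximum sits. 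Your handling of the rotations is also somewhat cleaner: you fully diagonalise $Q_j$ with ordered eigenvalues, so convergence follows from the sub-subsequence argument applied to the eigenvalue triple, and you explicitly note that the near-degeneracy of $q_2^{(j)},q_3^{(j)}$ is harmless because only the diagonal matters; the paper instead only aligns the top eigenvector with $e_1$ and runs a proof by contradiction to pin down the limit. Both routes are sound, and your version is arguably more self-contained.
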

\begin{proof}
First we show that $\frac{2}{3}=\max\limits_{Q \in \overline{\mathcal{Q}}}|Q|^2$, and that if $|Q|^2 =\frac{2}{3}$ for $Q \in \overline{\mathcal{Q}}$, then $Q=e\otimes e-\frac{1}{3}$ for some $e \in \s2$. Since $\mathcal{Q}$ is a bounded set, there exists at least one maximiser of $|Q|^2$ on $\overline{\mathcal{Q}}$. The maximum cannot be in the interior, since if $Q \in \mathcal{Q}$ then $(1+\epsilon)Q \in \mathcal{Q}$ for small $\epsilon>0$. Since $Q \in \partial\mathcal{Q}$, at least one eigenvalue of $Q$ must be $-\frac{1}{3}$. This means we can write $Q$ as a diagonal matrix in its eigenframe, $Q=D\left(-\frac{1}{3},a,\frac{1}{3}-a\right)$ for some $a \in \left[-\frac{1}{3},\frac{2}{3}\right]$.  In this case, $|Q|^2= 2a^2-\frac{2a}{3}+\frac{1}{9}$. This is a positive quadratic and therefore the maximum must be on the boundary. Either choice $a=-\frac{1}{3},\frac{2}{3}$ gives the same result up to permuting the eigenvectors that $Q=D\left(-\frac{1}{3},-\frac{1}{3},\frac{2}{3}\right)$, with $|Q|^2=\frac{2}{3}$.
Now take $|Q_j|^2 \to \frac{2}{3}$ with $Q_j \in \mathcal{Q}$. Take $R_j \in \text{SO}(3)$ to be rotations so that $R_jQ_jR_j^Te_1 = v_{\max}(Q_j)e_1$. Assume that $R_jQ_jR_j^T$ does not have limit $\left(e_1\otimes e_1-\frac{1}{3}I\right)$, then there would be a subsequence (not relabelled) such that $\left|R_jQ_jR_j^T-e\otimes e-\frac{1}{3}I\right| $ is bounded away from zero, and also $R_jQ_jR_j^T \to Q^*$ for some $Q^*$ by compactness. It must then hold that $|Q^*|^2=\frac{2}{3}$ and $Q^* \in \overline{\mathcal{Q}}$, so $Q^*=e\otimes e-\frac{1}{3}$ for some $ e \in \s2$. Finally, $v_{\max}(Q_j)e_1=R_jQ_jR_j^Te_1\to Q^*e_1 = v_{\max}(Q^*)e_1$ by continuity of the largest eigenvalue, so this implies that $Q^*=e_1\otimes e_1-\frac{1}{3}I$.
The conclusion that global minimisers must approach $e_1\otimes e_1-\frac{1}{3}$ then holds since $\eta_j < |Q_j|^2<\frac{2}{3}$, so as $\eta_j \nearrow \frac{2}{3}$, $|Q_j|^2 \to \frac{2}{3}$. 
\end{proof}

\begin{proposition}\label{propLimitingF}
Let $\eta_j \to \frac{2}{3}$ from below, and $f_j$ be a corresponding global minimiser of $\mathcal{F}$. Given $R \in \text{SO}(3)$ let $[Rf]\in\ps2$ be given by $[Rf](p)=f(Rp)$. Then there exists rotations $R_j$ such that, $[R_jf_j] \wsl \frac{1}{2}\left(\delta_{e_1}+\delta_{-e_1}\right)$ in $C(\s2)^*$.
\end{proposition}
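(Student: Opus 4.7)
The plan is to combine the explicit form of the minimisers given by \Cref{theoremGlobalMinsEquivalent} with the eigenvalue degeneration from \Cref{lemmaLimitingQ}, and exploit the built-in symmetry $p \mapsto -p$ of the optimal densities. From \Cref{theoremGlobalMinsEquivalent}, each minimiser is of the form
\begin{equation*}
f_j(p)=\frac{1}{Z_j}\exp(\Lambda_j p\cdot p)\max(Q_j p\cdot p-\eta_j,0),
\end{equation*}
which has two crucial features: its support is contained in $E_{Q_j}=\{p\in\s2:Q_j p\cdot p>\eta_j\}$, and it is symmetric under $p\mapsto -p$ because it depends on $p$ only through $p\otimes p$.

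Apply \Cref{lemmaLimitingQ} to obtain rotations $R_j\in\mathrm{SO}(3)$ with $\tilde Q_j:=R_j Q_j R_j^T\to e_1\otimes e_1-\tfrac13 I$, and set $\tilde f_j=[R_j f_j]\in\ps2$. Since $\{\tilde f_j\}$ is a sequence of probability measures on the compact space $\s2$, Banach--Alaoglu yields a subsequence (not relabelled) with $\tilde f_j\wsl\mu$ in $C(\s2)^*$ for some probability measure $\mu$. The goal is to identify every such $\mu$ with $\tfrac12(\delta_{e_1}+\delta_{-e_1})$, from which the convergence of the full sequence follows.

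To pin down $\supp(\mu)$, note that $\tilde Q_j p\cdot p-\eta_j$ converges uniformly on $\s2$ to $(e_1\cdot p)^2 - 1$. Hence, for any open neighbourhood $V\supset\{e_1,-e_1\}$, there exists $\delta>0$ with $(e_1\cdot p)^2-1<-\delta$ on $\overline{\s2\setminus V}$, and so $E_{\tilde Q_j}\subset V$ for all sufficiently large $j$. Consequently $\supp(\tilde f_j)\subset V$ eventually, and testing against any continuous function supported in $\s2\setminus\overline{V}$ shows that $\mu(\s2\setminus\overline V)=0$. Taking an intersection over a nested sequence of such $V$ gives $\supp(\mu)\subseteq\{e_1,-e_1\}$, so $\mu=\alpha\delta_{e_1}+(1-\alpha)\delta_{-e_1}$ for some $\alpha\in[0,1]$.

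Finally, the symmetry $\tilde f_j(-p)=\tilde f_j(p)$ passes to the limit: for every $g\in C(\s2)$, writing $g^\sharp(p)=g(-p)$, we have $\int g^\sharp \tilde f_j\,dp=\int g\,\tilde f_j\,dp$, so $\int g^\sharp \,d\mu=\int g\,d\mu$, forcing $\alpha=\tfrac12$. This determines the limit uniquely, so the whole sequence $\tilde f_j$ converges weakly-$*$ to $\tfrac12(\delta_{e_1}+\delta_{-e_1})$. The only mildly delicate point is the support argument in the third step; the rest is essentially bookkeeping, since one does not need any uniform control on the Lagrange multipliers $\Lambda_j$ or the normalisers $Z_j$, only that the support $E_{\tilde Q_j}$ collapses and that the measures remain probability measures.
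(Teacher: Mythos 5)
Your proof is correct and rests on the same two ingredients as the paper's: the collapse of the supports $E_{\tilde Q_j}$ onto $\{e_1,-e_1\}$ (via \Cref{lemmaLimitingQ} and uniform convergence of $\tilde Q_j p\cdot p - \eta_j$), and the evenness $f_j(p)=f_j(-p)$ inherited from the explicit form in \Cref{theoremGlobalMinsEquivalent}. The only real difference is in the final step: the paper avoids subsequences entirely by symmetrising the test function $g$ to $\tfrac12(g(p)+g(-p))$ and applying a direct squeeze, whereas you extract a weak-$*$ subsequential limit via Banach--Alaoglu, identify it through its support and symmetry, and then appeal to uniqueness of subsequential limits. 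Both routes are clean; the paper's is slightly more self-contained while yours is more modular and explicit about where the evenness of $f_j$ comes from.
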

\begin{proof}
Let $Q_j$ denote the Q-tensor of $f_j$. By the \Cref{lemmaLimitingQ}, we take $R_j$ so that $R_jQ_jR_j^T \to e\otimes e-\frac{1}{3}I=Q$ for a given $e\in\s2$. Using that $R_jQ_jR_j^T p\cdot p - \eta_j$ converges uniformly on $\mathbb{S}^2$ to $Qp\cdot p-\frac{2}{3}$, we have that for a given $\epsilon >0$ and sufficiently large $j$, $\text{supp}([R_jf_j])\subset \left\{p \in \s2 : Qp\cdot p-\frac{2}{3}>-\epsilon\right\}=U_\epsilon$. Let $g \in C(\mathbb{S}^2)$. Then using that $f_j(p)=f_j(-p)$ for all $p \in \s2$, this gives that for sufficiently large $j$,
\begin{equation}
\begin{split}
\int_{\s2}[R_jf_j](p)g(p)\,dp=&\frac{1}{2}\int_{\s2}f_j(R_jp)\left(g(p)+g(-p)\right)\,dp\\
=&\int_{U_\epsilon}f_j(R_jp)\frac{g(p)+g(-p)}{2}\,dp\\
\in & \left[ \min\limits_{p \in U_\epsilon}\frac{g(p)+g(-p)}{2},\max\limits_{p \in U_\epsilon}\frac{g(p)+g(-p)}{2}\right].
\end{split}
\end{equation}
This implies that the limit inferior and limit superior of $\int_{\s2}[R_jf_j](p)g(p)\,dp$ as $j \to +\infty$ must lie in this interval also. However, $\epsilon>0$ was arbitrary. Using that $g$ is continuous and $U_\epsilon$ are nested, open neighbourhoods of $\{-e_1,e_1\}$ with $\bigcap\limits_{\epsilon>0}U_\epsilon=\{-e_1,e_1\}$, it holds that \begin{equation}\min\limits_{p \in U_\epsilon}\frac{g(p)+g(-p)}{2}\nearrow \min\limits_{p \in \{-e_1,e_1\}}\frac{g(p)+g(-p)}{2}=\frac{g(e_1)+g(-e_1)}{2}.\end{equation}
The same argument gives that $\max\limits_{p \in U_\epsilon}\frac{g(p)+g(-p)}{2}\searrow \frac{g(e_1)+g(-e_1)}{2}$. Therefore a squeezing argument gives that 
\begin{equation}\lim\limits_{j \to \infty} \int_{\s2}[R_jf_j](p)g(p)\,dp = \frac{g(e_1)+g(-e_1)}{2}\end{equation}
as required.

\end{proof}

\begin{proposition}
If $\eta<-\frac{1}{3}$, then the isotropic state is the unique global minimiser of $\mathcal{F}$.
\end{proposition}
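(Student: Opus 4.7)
The plan is to reduce the global minimisation problem to an earlier result about minimisers on $\mathcal{P}_+(\mathbb{S}^2)$ by showing that, when $\eta$ is sufficiently negative, every admissible form of minimiser is automatically bounded away from zero and infinity. In particular, for $\eta<-\frac{1}{3}$ the eigenvalue constraint on any $Q\in\mathcal{Q}$ gives the uniform positive lower bound $Qp\cdot p-\eta\geq v_{\min}(Q)-\eta>-\frac{1}{3}-\eta>0$ on all of $\mathbb{S}^2$, so $E_Q=\mathbb{S}^2$ for every admissible Q-tensor.

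First I would invoke \Cref{theoremExistence} to guarantee the existence of a global minimiser (since $\eta<-\frac{1}{3}<\frac{2}{3}$), and then use \Cref{propFormOfJ} together with the decomposition $\min_{f}\mathcal{F}(f)=\min_{Q\in\mathcal{Q}}J(Q)$ to conclude that any global minimiser must take the form
\begin{equation*}
f_Q(p)=\frac{1}{Z}\exp(\Lambda(Q)p\cdot p)(Qp\cdot p-\eta),
\end{equation*}
where the $\max$ has dropped out thanks to the positivity observed above. Because $\exp(\Lambda(Q)p\cdot p)$ is a continuous, strictly positive function on the compact sphere and $Qp\cdot p-\eta$ is uniformly bounded above and below by strictly positive constants, $f_Q$ itself is bounded away from zero and infinity, i.e.\ $f_Q\in\mathcal{P}_+(\mathbb{S}^2)\cap\text{dom}(\mathcal{F})$.

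Finally, I would apply \Cref{propositionIsotropicStateGlobalMin}, which asserts that for every $\eta<0$ with $\eta\neq-\frac{2}{15}$ the isotropic state $f_U$ is the unique global minimiser on $\mathcal{P}_+(\mathbb{S}^2)\cap\text{dom}(\mathcal{F})$. Since $\eta<-\frac{1}{3}$ certainly implies $\eta<0$ and $\eta\neq-\frac{2}{15}$, the argument closes: any global minimiser of $\mathcal{F}$ lies in $\mathcal{P}_+(\mathbb{S}^2)\cap\text{dom}(\mathcal{F})$ and is therefore forced to equal $f_U$.

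The only conceptual point that needs care is the first step, namely verifying that the condition $\eta<-\frac{1}{3}$ is exactly what guarantees $E_Q=\mathbb{S}^2$ uniformly in $Q\in\mathcal{Q}$; the rest is essentially bookkeeping combining results already established. No new estimates on $J$ or on the Euler--Lagrange system are needed, which is what makes this short proof possible.
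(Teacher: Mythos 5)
Your proof is correct and follows essentially the same route as the paper: use the decomposition to force any global minimiser to equal $f_{Q}$ for its own Q-tensor, observe that for $\eta<-\frac{1}{3}$ the eigenvalue constraint makes $Qp\cdot p-\eta$ uniformly positive so $f_Q\in\mathcal{P}_+(\mathbb{S}^2)$, and then invoke \Cref{propositionIsotropicStateGlobalMin}. You spell out the bound $Qp\cdot p-\eta>-\frac{1}{3}-\eta>0$ and the check $\eta\neq-\frac{2}{15}$ slightly more explicitly than the paper does, but the argument is the same.
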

\begin{proof}
Assume $\eta<-\frac{1}{3}$. For $Q \in  \mathcal{Q}$, we have that the optimal energy distribution for $Q$ is given by $f_Q(p)=\frac{1}{Z}\exp(\Lambda(Q)p\cdot p)(Qp\cdot p-\eta)$, and in particular it is bounded away from zero and $+\infty$. Since all global minimisers of $\mf$ must be the optimal energy distribution for their Q-tensor, all global minimisers are bounded away from zero and $+\infty$, and the global minimiser satisfies $f^* \in \mathcal{P}_+(\s2)$. By \Cref{propositionIsotropicStateGlobalMin}, the isotropic state is the unique global minimiser of $\mathcal{P}_+(\s2)$, so the isotropic must be the unique global minimiser on $\ps2$.
\end{proof}

While obtaining the full phase diagram analytically appears to be out of reach, we can obtain some further qualitative results on the phase diagram. We will now show that for $\eta$ sufficiently small, the isotropic state is not a global minimiser. The result holds trivially for $\eta\geq 0$ since the isotropic state has infinite energy.

\begin{proposition}\label{propIsotropicNotGlobal}
There exists some $\eta^*<0$ so that for $\eta^*<\eta<0$, the isotropic state is not a global minimiser of $\mathcal{F}$.
\end{proposition}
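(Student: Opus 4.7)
The plan is to compare the energy of the isotropic state, which diverges as $\eta \to 0^-$, with that of a fixed sufficiently aligned competitor, whose energy stays uniformly bounded on $\eta \le 0$. Explicitly, for $f_U = \frac{1}{4\pi}$ one has $Q = 0$, so
\begin{equation}
\mathcal{F}(f_U,\eta) = \int_{\s2} \frac{1}{4\pi}\ln\frac{1}{4\pi}\,dp - \ln(-\eta) = -\ln(-4\pi\eta),
\end{equation}
which tends to $+\infty$ as $\eta \to 0^-$. So it suffices to produce a single $\bar f \in \ps2$ whose energy $\mathcal{F}(\bar f,\eta)$ is bounded above by a constant independent of $\eta$ on some interval $(\eta^*,0)$.

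To build such a competitor, I would reuse the construction from \Cref{lemmaNonEmptyDomain} at $\eta = 0$: pick $\bar f = \frac{1}{2\pi\epsilon}\chi_{\{p\cdot e > 1-\epsilon\}}$ with $\epsilon>0$ small enough so that $\supp(\bar f) \subset \{Q_{\bar f} p\cdot p > \delta\}$ for some $\delta > 0$, which in turn ensures $\mathcal{F}(\bar f,0) < +\infty$. The key observation is then monotonicity in $\eta$: for any $\eta \le 0$ and $p \in \supp(\bar f)$,
\begin{equation}
Q_{\bar f} p\cdot p - \eta = Q_{\bar f} p\cdot p + |\eta| \ge Q_{\bar f} p\cdot p > 0,
\end{equation}
so $-\ln(Q_{\bar f} p\cdot p - \eta) \le -\ln(Q_{\bar f} p\cdot p)$ pointwise on the support. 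Since the Shannon entropy term of $\mathcal{F}(\bar f,\eta)$ does not depend on $\eta$, this yields $\mathcal{F}(\bar f,\eta) \le \mathcal{F}(\bar f,0) =: C$ for every $\eta \le 0$.

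Combining the two estimates, as soon as $\eta \in (-\tfrac{e^{-C}}{4\pi},\,0)$ we have $\mathcal{F}(f_U,\eta) = -\ln(-4\pi\eta) > C \ge \mathcal{F}(\bar f,\eta)$, so $f_U$ is strictly beaten by $\bar f$ and therefore cannot be the global minimiser. Setting $\eta^* = -\tfrac{e^{-C}}{4\pi}$ completes the proof. There is no real obstacle here; the only thing to check carefully is the sign/monotonicity step for $\bar f$, which is immediate from the explicit support condition inherited from the $\eta = 0$ construction.
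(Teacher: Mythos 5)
Your proof is correct, and it takes a genuinely different route from the paper's. The paper works entirely at the level of the reduced macroscopic function $J$: it fixes $Q^*\neq 0$, uses continuity of $J$ to bound $J(Q^*,\eta)$ for small $|\eta|$, and uses the uniform blow-up of $J$ at $\partial\,\text{dom}(J)$ (Proposition~\ref{propJBlowUp}) together with $(0,0)\in\partial\,\text{dom}(J)$ to force $J(0,\eta)$ above that bound. Your argument bypasses the $J$-machinery altogether: you compute $\mathcal{F}(f_U,\eta)=-\ln(-4\pi\eta)$ in closed form, fix a single competitor $\bar f$ (the cap distribution from Lemma~\ref{lemmaNonEmptyDomain} at $\eta=0$) whose support is compactly contained in $\{Q_{\bar f}p\cdot p>\delta\}$, and observe that $\mathcal{F}(\bar f,\cdot)$ is monotone increasing in $\eta$, so $\mathcal{F}(\bar f,\eta)\le\mathcal{F}(\bar f,0)=C$ for all $\eta\le 0$. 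The conclusion then follows from the divergence of $-\ln(-4\pi\eta)$. Your approach is more elementary and self-contained (it uses only Section~\ref{secMinimisers} material and simple monotonicity, not the $C^1$ regularity or uniform coercivity of $J$), and it has the extra benefit of producing an explicit threshold $\eta^*=-\frac{e^{-C}}{4\pi}$. The paper's approach, on the other hand, fits the post-\Cref{secAuxiliaryProblem} placement of the proposition and illustrates how the blow-up of $J$ at the boundary of its domain controls the phase behaviour, which is thematically tied to the rest of the analysis. Both are sound; the only point worth stating explicitly in your write-up is that the Shannon entropy $\int\bar f\ln\bar f$ is finite (immediate since $\bar f$ is a bounded step function), so that $C<+\infty$.
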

\begin{proof}
We will show this result using a perturbation argument. For clarity, we will recall the dependence of $\mathcal{F}$ explicitly on $\eta$. We know that for $|Q|^2>\eta$, $\mathcal{F}$ is continuous. Take $Q^* \in \mathcal{Q}\setminus \{0\}$. Then $J(Q^*,0)=M<+\infty$. Since $J$ is continuous, this means that $J(Q^*,\eta)<2M$ for all $|\eta|$ sufficiently small. Since $J$ blows up uniformly at the boundary of its domain and $(0,0)\in\partial\text{dom}(J)$, we know that there exists some ball $(0,0)\in B \subset \mathcal{Q}\times\mathbb{R}$ so that if $(Q,\eta)\in B$, then $J(Q,\eta)>2M$. In particular, if $\{\eta \in\mathbb{R}: \exists Q, (Q,\eta)\in B\}=B'\subset \mathbb{R}$, then $J(0,\eta)>2M$ for all $\eta \in B'$. Therefore $J(Q^*,\eta)<2M<J(0,\eta)$ for all $|\eta|$ sufficiently small. Therefore the isotropic state is not the global minimiser for sufficiently small $|\eta|$.
\end{proof}

\begin{remark}\label{remarkVdW}
It is possible to use this free energy density to provide an analogue of the Van der Waals equation of state for the system. Returning to original units, the free energy density is given by 
\begin{equation}\hat{\mathcal{F}}(f)=k_BT\left(\rho\ln \rho + \rho\int_{\mathbb{S}^2}f(p)\ln f(p)-f(p)\ln \left(1-\rho\left(c-\frac{3}{2}dQp\cdot p\right)\right)\,dp\right).\end{equation}
$\eta$ is related to $\rho$ by $\eta=\frac{2(\rho c-1)}{\rho d}$, with $c,d>0$ material constants satisfying $\frac{d}{3c-2d}>0$ and $\rho>0$ is the number density. This is related to the pressure $P$ via
\begin{equation}P=-\hat{\mathcal{F}}+\rho\frac{\partial\hat{\mathcal{F}}}{\partial \rho}.\end{equation}
There is an issue in that the derivative of the free energy cannot rigorously be taken with respect to number density at non-trivial minimisers due to the support condition. In \cite{zheng2016density} the expression for pressure is obtained non-rigorously. However, by reducing the energy to the macroscopic free-energy function $J$, the energy becomes sufficiently regular for the derivative with respect to number density to be taken (see \Cref{propJDerivatives}), providing consistent results with Zheng {\it et al.}. First, we note that in original units, $J=J(Q,\eta)$ can be written as 
\begin{equation}\min\limits_{f \in A(Q)}\hat{\mathcal{F}}(f)=k_BT\left(\rho J\left(Q,\frac{2(\rho c-1)}{3\rho d}\right)-\rho \ln d\right).\end{equation}
 This then gives the pressure as 
\begin{equation}
\begin{split}
\frac{1}{k_BT}P=&\rho\frac{\partial}{\partial \rho}\left(\rho J\left(Q,\frac{2(\rho c-1)}{3\rho d}\right)-\rho \ln d\right)-\left(\rho J\left(Q,\frac{2(\rho c-1)}{3\rho d}\right)-\rho \ln d\right)\\
=&\rho^2\frac{\partial J}{\partial \eta}\left(Q,\frac{2(\rho c-1)}{3\rho d}\right)\frac{\partial}{\partial \rho}\left(\frac{2(\rho c-1)}{3\rho d}\right)\\
=& \frac{2}{3}\rho^2\int_{E_Q}\frac{f_Q(p)}{Qp\cdot p-\frac{2(\rho c-1)}{3\rho d}}\,dp\left(\frac{\rho cd- d(\rho c-1)}{(\rho d)^2}\right)\\
=& \frac{1}{d} \int_{E_Q}\frac{f_Q(p)}{Qp\cdot p -\frac{2(\rho c-1)}{3\rho d}}\,dp \end{split}
\end{equation}
This then gives the generalised Van der Waals equation of state,
\begin{equation}\label{eqVdW}
P= k_BT\rho\int_{E_Q}\frac{f_Q(p)}{1-\rho\left(c-\frac{3}{2}dQp\cdot p\right)}\,dp
\end{equation}
This then allows us to demonstrate how the pressure blows up at the saturation limit, since returning to dimensionless units and using Jensen's inequality,
\begin{equation}
\begin{split}
\frac{2\rho d}{3k_BT}P=&\int_{E_Q}\frac{f_Q(p)}{Qp\cdot p-\eta}\,dp\\
\geq & \frac{1}{\int_{E_Q}\left(Qp\cdot p-\eta\right)f_Q(p)\,dp}\\
=& \frac{1}{|Q|^2-\eta}\\
\geq & \frac{1}{\frac{2}{3}-\eta}\\
=&\frac{3\rho d}{3-\rho(2d-3c)}\\
=& \left(\frac{3\rho d}{3c-2d}\right)\frac{1}{\frac{3}{2d-3c}-\rho}
\end{split}
\end{equation}
where the saturation limit $\eta \nearrow \frac{2}{3}$ corresponds to $\rho \nearrow \rho_s = \frac{3}{3c-2d}$. This gives a lower bound of the blow up rate of the pressure as $P\geq \frac{C}{\rho_s-\rho}$. In particular, we can consider a dimensionless analogue of pressure, $P^*=\frac{2\rho d}{3k_BT}P$, and see that we have 
\begin{equation}\label{equationBoundsDimensionlessPressure}
P^*\geq\frac{1}{|Q|^2-\eta}\geq \frac{1}{\frac{2}{3}-\eta}.
\end{equation}
\end{remark}

\section{Local minimisers of $\mathcal{F}$}
\label{secLocalMinimisers}
By splitting the global minimisation into two manageable minimisation problems, it was possible to reduce the minimisation of $\mathcal{I}_{\mathcal{P}(X)}$ to a finite dimensional problem. The next natural question is if analogous results can be obtained for local minimisers. In the case of the infinite dimensional problem, in the general case one must take care as to with respect to which topology a local minimiser refers to. In the following, a general framework for establishing equivalence between local minimisers of analogous problems will be presented so that it may be adapted in later sections with ease. 

\begin{definition}\label{definitionsForLocalMins}
Let $(V,||\cdot ||_V)$ be a Banach space, $U\subset V$, $T:V\to\mathbb{R}^k$ be a finite-rank continuous linear operator. Let $\mathcal{Q}=TU$, and $\mathcal{F}:U\to\mathbb{R}\cup\{+\infty\}$ admit a lower bound, be coercive and be lower semicontinuous with respect to the weak topology on $V$. Assume that if $f \in U$ and $\mathcal{F}(f)<+\infty$, then $\mathcal{F}$ is strictly convex on the set where $b=Tf$ is constant. Analogously to before, define $A(b)=\{f \in U : Tu=b\}$ and let $D=\{Tu:\mathcal{F}(f)<+\infty\}$. Define the right inverse $T^{-1}:D\to V$ by 
\begin{equation}T^{-1}(b)=\argmin\limits_{f\in A(b)} \mathcal{F}(f).\end{equation}
Define $U_\mathcal{F}=T^{-1}D$. In the following, assume $U_\mathcal{F}\subset X\subset V$, where $(X,||\cdot ||_X)$ is a Banach space with $||\cdot ||_X$ inducing a topology at least as strong as that induced by $||\cdot ||_V$ when restricted to $X$, and $T^{-1}$ is continuous with respect to the topology induced by $||\cdot ||_X$. Finally, define 
\begin{equation}
J(b)=\min\limits_{f \in A(b)}\mathcal{F}(b).
\end{equation}
\end{definition}

\begin{remark}
Note that $T^{-1}$ is only a right inverse, since $TT^{-1}=\text{Id}$, but $T^{-1}Tf = f$ if and only if $f \in \mathcal{P}_{\mathcal{F}}(\mathbb{S}^2)$. In this work, candidates for $X$ will be the $L^p$ spaces for $1\leq p \leq \infty$ and $C^k$ spaces, and the the standard topology on $V$ will be the $L^1$ topology. Furthermore, the conditions on $\mathcal{F}$ ensure a unique solution $T^{-1}b$. 
\end{remark}

\begin{proposition}\label{propMinsInMinSet}
Assume that $f^* \in U$ is an $X$-local minimiser. Then $f^* \in U_\mathcal{F}$. 
\end{proposition}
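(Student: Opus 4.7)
The plan is a contradiction argument built on the strict convexity of $\mathcal{F}$ along the affine fibres of $T$. Suppose $f^*$ is an $X$-local minimiser but $f^* \notin U_{\mathcal{F}}$. The $X$-local minimisation hypothesis forces $f^* \in X$ (otherwise no $X$-neighbourhood of $f^*$ is even defined), and since $U_{\mathcal{F}} \subset X$ is non-empty and $T^{-1}$ is continuous, any $X$-neighbourhood of $f^*$ meets a finite-energy competitor, so local minimality gives $\mathcal{F}(f^*) < +\infty$. Consequently $Tf^* \in D$, and we may set $g := T^{-1}(Tf^*) \in U_{\mathcal{F}}$. The assumption $f^* \notin U_{\mathcal{F}}$ forces $f^* \neq g$, and since $g$ is the unique minimiser of $\mathcal{F}$ on $A(Tf^*)$ (by strict convexity on this fibre), we get $\mathcal{F}(g) < \mathcal{F}(f^*)$.

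Next I would construct the explicit competitor curve
\begin{equation}
f_t = (1-t) f^* + t g, \qquad t \in (0,1].
\end{equation}
By linearity of $T$, $T f_t = (1-t) T f^* + t T g = T f^*$, so $f_t \in A(T f^*) \subset U$ (using that $A(T f^*)$ is convex, which is immediate once $U$ is convex, as in the concrete setting $U = \mathcal{P}(\mathbb{S}^2)$). Strict convexity of $\mathcal{F}$ on $A(T f^*)$, together with $\mathcal{F}(g) < \mathcal{F}(f^*)$, yields
\begin{equation}
\mathcal{F}(f_t) < (1-t)\mathcal{F}(f^*) + t \mathcal{F}(g) < \mathcal{F}(f^*)
\end{equation}
for every $t \in (0,1]$.

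Finally, I would exploit that both $f^*$ and $g$ lie in $X$ to localise the competitor: $\|f_t - f^*\|_X = t\|g - f^*\|_X \to 0$ as $t \to 0^+$. Hence for sufficiently small $t$, $f_t$ lies in the $X$-ball about $f^*$ guaranteed by the local minimality, yet $\mathcal{F}(f_t) < \mathcal{F}(f^*)$, a contradiction. Therefore $f^* = g \in U_{\mathcal{F}}$.

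The main delicate point is verifying $f^* \in X$ and $\mathcal{F}(f^*) < +\infty$, since neither is part of the abstract definition; both rest on the non-degeneracy of the setup (non-empty $U_{\mathcal{F}}$ and the continuity of $T^{-1}$, which provide finite-energy elements of $X$ near $f^*$). The remaining convexity and linearity steps are routine once these preliminaries are in place.
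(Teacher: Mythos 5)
Your core argument is the same as the paper's: assume $f^* \notin U_{\mathcal{F}}$, take $g = T^{-1}(Tf^*)$, observe that strict convexity of $\mathcal{F}$ on the fibre $A(Tf^*)$ gives $\mathcal{F}\bigl((1-t)f^* + tg\bigr) < \mathcal{F}(f^*)$ for $t \in (0,1]$, and then drive $t \to 0^+$ to contradict $X$-local minimality; this is exactly the paper's line-segment contradiction.

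The one place you diverge is in the preliminary step, and it contains a flaw. You argue that since $U_{\mathcal{F}}$ is non-empty and $T^{-1}$ is continuous, ``any $X$-neighbourhood of $f^*$ meets a finite-energy competitor,'' hence $\mathcal{F}(f^*) < +\infty$. That inference does not hold: continuity of $T^{-1}$ and $U_{\mathcal{F}} \neq \emptyset$ say nothing about $U_{\mathcal{F}}$ (or $\mathrm{dom}(\mathcal{F})$) being $X$-dense near $f^*$, so there is no guarantee that a small $X$-ball about $f^*$ contains any finite-energy point. The concern you raise is legitimate --- the paper's proof writes $T^{-1}Tf^*$ without checking that $Tf^* \in D$, which requires $\mathcal{F}(f^*) < +\infty$ --- but the honest resolution is that this is an implicit part of what ``$X$-local minimiser'' means in this framework (an infinite-energy local minimiser would force $\mathcal{F} \equiv +\infty$ on a neighbourhood, a degenerate case the definitions are not meant to cover), rather than something derivable from the stated hypotheses. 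The rest of your proof, including the observation $\|f_t - f^*\|_X = t\|g - f^*\|_X$, is correct and in fact fixes a small typo in the paper's final displayed estimate.
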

\begin{proof}
For the sake of contradiction assume otherwise. Let $f = T^{-1}Tf^*\neq f^*$. Then $Tf=Tf^*$, and $\mathcal{F}(f^*)>\mathcal{F}(f)$. Furthermore, since $\mathcal{F}$ is strictly convex on $A(Tf)$, this means that for $1>\gamma>0$
\begin{equation}
\begin{split}
\mathcal{F}(\gamma f+(1-\gamma)f^*)<&\gamma\mathcal{F}(f)+(1-\gamma)\mathcal{F}(f^*)\\
\leq & \gamma \mathcal{F}(f^*)+(1-\gamma)\mathcal{F}(f^*)=\mathcal{F}(f^*).
\end{split}
\end{equation}
However, $||(\gamma f + (1-\gamma)f^*)-f^*||_X=\gamma||f^*||_X$, so by taking $\gamma \to 0$, this contradicts that $f^*$ is an $X$-local minimiser.
\end{proof}

\begin{proposition}\label{propFMinQ}
Assume that $f^*$ is an $X$-local minimiser of $\mathcal{F}$. Then $b^*=Tf^*$ is a local minimiser of $J$. 
\end{proposition}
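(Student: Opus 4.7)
The plan is to use the continuity of $T^{-1}$ with respect to the $X$-topology to transport the $X$-local minimality of $f^*$ to $\mathbb{R}^k$-local minimality of $J$ at $b^*=Tf^*$. The key preparatory observation is \Cref{propMinsInMinSet}, which tells us that $f^* \in U_{\mathcal{F}}$, i.e. $f^* = T^{-1}b^*$. In particular,
\begin{equation}
\mathcal{F}(f^*) = \mathcal{F}(T^{-1}b^*) = J(b^*).
\end{equation}

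Next I would consider an arbitrary $b \in D$ near $b^*$ in $\mathbb{R}^k$ and the associated optimiser $T^{-1}b \in U_{\mathcal{F}}$. By hypothesis, $T^{-1}:D\to X$ is continuous, so $T^{-1}b \to T^{-1}b^* = f^*$ in $X$ as $b \to b^*$. Since $f^*$ is an $X$-local minimiser of $\mathcal{F}$, there exists $\delta>0$ such that $\mathcal{F}(g) \geq \mathcal{F}(f^*)$ for all $g \in U$ with $\|g-f^*\|_X<\delta$. By continuity of $T^{-1}$, there is $\epsilon>0$ so that $\|T^{-1}b - f^*\|_X<\delta$ whenever $b \in D$ and $|b-b^*|<\epsilon$. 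For such $b$,
\begin{equation}
J(b) = \mathcal{F}(T^{-1}b) \geq \mathcal{F}(f^*) = J(b^*).
\end{equation}

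Finally, one should note that $J(b) = +\infty$ if $b \notin D$, so the inequality $J(b) \geq J(b^*)$ is trivial in that case; hence the local minimality of $J$ at $b^*$ holds on a full $\mathbb{R}^k$-neighbourhood, not just on $D$. The only non-routine input is really the continuity of $T^{-1}$, which is the assumption built into \Cref{definitionsForLocalMins}; everything else is packaging. I do not expect any genuine obstacle in this argument, since \Cref{propMinsInMinSet} has already done the work of placing $f^*$ at a point where $J$ and $\mathcal{F}$ agree along the fibre, and the continuity of the optimal-energy map allows $X$-neighbourhoods of $f^*$ to be reached by varying $b$ in a genuine neighbourhood of $b^*$.
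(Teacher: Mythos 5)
Your argument is correct and is essentially the same as the paper's: both hinge on \Cref{propMinsInMinSet} to identify $f^* = T^{-1}b^*$ (so $\mathcal{F}(f^*) = J(b^*)$) and then use the continuity of $T^{-1}$ in the $X$-topology to pull the $X$-local minimality of $\mathcal{F}$ back to local minimality of $J$. The paper phrases this as a contradiction with a sequence $b_j \to b^*$, whereas you give a direct $\epsilon$--$\delta$ formulation and also note explicitly that $b \notin D$ is harmless since $J$ is $+\infty$ there; these are presentational differences only.
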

\begin{proof}
Assume otherwise for the sake of contradiction. Then there exists $b_j \to b$ with $J(b_j)<J(b^*)$ for all $j$. Then, by the continuity assumption, $f_j=T^{-1}b_j\to f^*=T^{-1}b^*$ in $X$, where the final equality holds because $f \in U_\mathcal{F}$. Then 
\begin{equation}\mathcal{F}(f^*)=J(b^*)>J(b_j)=\mathcal{F}(f_j),\end{equation}
contradicting that $f^*$ is an $X$-local minimiser of $\mathcal{F}$.
\end{proof}

\begin{proposition}\label{propQMinF}
Assume $b^*$ is a local minimiser for $J$. Then $T^{-1}b^*=f^*$ is an $X$-local minimiser for $\mathcal{F}$. 
\end{proposition}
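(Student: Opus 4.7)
The plan is to run a direct continuity argument that transfers the local minimisation inequality for $J$ back to $\mathcal{F}$ via the constraint map $T$ and the definition of $J$.

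First I would fix a neighbourhood. Since $b^* = Tf^*$ is a local minimiser of $J$, there exists some $\varepsilon > 0$ such that $J(b) \geq J(b^*)$ for every $b \in D$ with $|b - b^*| < \varepsilon$. The task is to find a ball around $f^*$ in $X$ on which $\mathcal{F}$ cannot drop below $\mathcal{F}(f^*) = J(b^*)$.

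Next I would exploit the continuity of $T$ on $X$. By hypothesis $T$ is a continuous linear map on $V$, and the $X$-topology is at least as strong as the $V$-topology when restricted to $X$, so $T$ restricts to a continuous map $X \to \mathbb{R}^k$. Hence there is some $\delta > 0$ such that $|Tg - Tf^*| < \varepsilon$ for every $g \in U$ with $\|g - f^*\|_X < \delta$.

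Finally I would close the argument by comparing $\mathcal{F}(g)$ to $J(Tg)$. For any such $g$, if $\mathcal{F}(g) = +\infty$ there is nothing to show, and otherwise $Tg \in D$ and
\begin{equation}
\mathcal{F}(g) \geq \min_{h \in A(Tg)} \mathcal{F}(h) = J(Tg) \geq J(b^*) = \mathcal{F}(T^{-1}b^*) = \mathcal{F}(f^*),
\end{equation}
where the last equality uses $f^* = T^{-1}b^*$. This shows $f^*$ is an $X$-local minimiser of $\mathcal{F}$.

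There is no real obstacle here: the only point that needs the standing assumptions of \Cref{definitionsForLocalMins} is the implication that $T$ is continuous on $X$, which is immediate from the comparison of topologies, and the identification $J(Tg) = \min_{h \in A(Tg)} \mathcal{F}(h)$ which is the very definition of $J$. Strict convexity of $\mathcal{F}$ on each fibre $A(b)$ (needed to make $T^{-1}$ well-defined) and the previously established continuity of $T^{-1}$ are not used in this direction; they were only required for \Cref{propFMinQ}. So the argument is essentially a one-line chain of inequalities once the continuity of $T$ on $X$ is noted.
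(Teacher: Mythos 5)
Your proof is correct and is essentially the same argument as the paper's, just stated directly rather than by contradiction: the paper assumes a sequence $f_j \to f^*$ in $X$ with $\mathcal{F}(f_j) < \mathcal{F}(f^*)$ and derives the chain $J(b^*) = \mathcal{F}(f^*) > \mathcal{F}(f_j) \geq J(Tf_j)$ with $Tf_j \to b^*$, which is exactly your inequality chain run in reverse. Your closing observation that strict convexity and the continuity of $T^{-1}$ are not used in this direction (only to make $T^{-1}$ well-defined in the first place) is accurate and a useful clarification.
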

\begin{proof}
Assume otherwise for the sake of contradiction, so that there exists $f_j \to f$ in $X$ with $\mathcal{F}(f_j)<\mathcal{F}(f^*)$ for all $j$. Let $b_j = Tf_j$. In particular, since $f_j \to f$, this implies $b_j \to b$. Furthermore
\begin{equation}J(b^*)=\mathcal{F}(f^*)>\mathcal{F}(f_j)\geq J(b_j),\end{equation}
contradicting that $b^*$ is a local minimiser of $J$. 
\end{proof}

\begin{proposition}\label{corollaryNoLavrentiev}
Let $X,X'$ be any two topologies satisfying the conditions in \Cref{definitionsForLocalMins}. Then $f^*$ is an $X$-local minimiser if and only if it is an $X'$-local minimiser
\end{proposition}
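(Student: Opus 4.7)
The plan is to chain together \Cref{propMinsInMinSet}, \Cref{propFMinQ} and \Cref{propQMinF} to show that both an $X$-local minimiser and an $X'$-local minimiser of $\mathcal{F}$ correspond to the same intrinsic notion, namely a local minimiser of the finite-dimensional function $J$ on $D \subset \mathbb{R}^k$. Since $J$ is defined independently of the auxiliary Banach space used to measure perturbations of $f^*$, this forces the two local minimiser notions to coincide.

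More concretely, suppose $f^*$ is an $X$-local minimiser of $\mathcal{F}$. First, by \Cref{propMinsInMinSet}, $f^*\in U_\mathcal{F}$, so $f^*=T^{-1}Tf^*$, and in particular $f^*\in X'$ as well since $U_\mathcal{F}\subset X\cap X'$. Next, by \Cref{propFMinQ}, $b^* := Tf^*$ is a local minimiser of $J$ in $\mathbb{R}^k$. The crucial observation is that this latter statement makes no reference to $X$: it is a purely finite-dimensional condition on $b^*\in D$. Finally, applying \Cref{propQMinF} with the topology on $X'$, we deduce that $T^{-1}b^* = f^*$ is an $X'$-local minimiser of $\mathcal{F}$. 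Swapping the roles of $X$ and $X'$ gives the converse.

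There is essentially no obstacle here beyond verifying that the hypotheses of \Cref{definitionsForLocalMins} transfer across the two topologies, so that \Cref{propFMinQ} and \Cref{propQMinF} are applicable to both $X$ and $X'$. In particular, both propositions only use (i) the continuity of $T^{-1}:D\to X$ (respectively $X'$) and (ii) the coincidence $T^{-1}Tf^*=f^*$, both of which hold by assumption and by the first step above. Thus the proof reduces to one line once these two auxiliary propositions are quoted, and the finite-dimensional intermediate problem acts as a common reference point that is insensitive to the choice of ambient topology.
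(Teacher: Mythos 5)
Your proof is correct and is essentially the same as the paper's: both chain \Cref{propMinsInMinSet}, \Cref{propFMinQ} and \Cref{propQMinF}, using the finite-dimensional problem for $J$ as the topology-independent intermediary, with symmetry giving the converse. You simply spell out a few intermediate observations (e.g.\ that local minimality of $J$ makes no reference to $X$) that the paper leaves implicit.
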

\begin{proof}
The proof is symmetric, so only one direction will be shown. Assume that $f^*$ is an $X$-local minimiser. Then $Tf^*$ is a local minimiser of $J$ by \Cref{propFMinQ}. Therefore $T^{-1}Tf^*$ is an $X'$-local minimiser of $\mathcal{F}$ by \Cref{propQMinF}. Finally, $T^{-1}Tf^*=f^*$ by \Cref{propMinsInMinSet}. 
\end{proof}

\begin{remark}\label{remarkGeneralMinimisation}
The proofs above are in fact far more general than this particular case. Given a real topological vector space $V$, functional $\mathcal{F}:V\to\mathbb{R}\cup\{+\infty\}$, and a continuous linear operator on $V$ with finite dimensional range $T$ so that $\mathcal{F}$ is strictly convex on the subsets $A(Tv)=\{\tilde{v} \in V : T\tilde{v}=Tv\}$, the proofs all carry over exactly the same. In particular, this generalises the results for Onsager-type models as given in \cite{taylor2015maximum}, where $\mathcal{F}:\mathcal{P}(\mathbb{S}^2)\to\mathbb{R}\cup\{+\infty\}$ is 
\begin{equation}\mathcal{F}(f)=\int_{\Omega}f(t)\ln f(t)\,d\mu(t)-\int_\Omega\int_\Omega f(t)f(s)\sum\limits_{i,j=1}^kc_{ij}a_i(t)a_j(s)\,d\mu(t)\,d\mu(s),\end{equation}
under some assumptions on the functions $a_i\in L^\infty(\Omega)$. Similarly, if the appropriate continuity results are provided, these results can be applied to generalisations of the functional given in \cite{zheng2016density} accounting for more general state spaces and excluded volume terms, such as 
\begin{equation}\mathcal{F}(f)=\int_\Omega f(t)\ln f(t) - f(t)\ln \left(\int_\Omega\sum\limits_{i,j=1}^k c_{ij}a_i(t)a_j(s)f(s)\,d\mu(s)-\eta\right)\,d\mu(t).\end{equation}
\end{remark}

\begin{theorem}\label{theoremNoLavrentiev}
If $f \in \mathcal{P}(\s2)$ is a $W^{1,q}$-local minimiser for any $q \in [1,\infty)$, then $f$ is an $L^1$-local minimiser. In particular, local minimisation with respect to $L^q$ ($q \in [1,\infty]$), $C^\alpha$ ($\alpha<1$) or $W^{1,q}$ ($q \in [1,\infty)$) are all equivalent. 
\end{theorem}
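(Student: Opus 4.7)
The plan is to deduce this statement as a direct application of the abstract framework in \Cref{definitionsForLocalMins} and the equivalence result \Cref{corollaryNoLavrentiev}. The essential observation is that the hard analytic content, namely continuity of the auxiliary map $Q \mapsto f_Q$ in all of the relevant topologies, is already contained in \Cref{corollaryfContinuous}; what remains is to verify that each candidate Banach space fits into the abstract setting.

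First I would set $V = L^1(\s2)$, $U = \ps2$, and $T : L^1(\s2) \to \sm3$ defined by $Tf = \int_{\s2}(p\otimes p - \tfrac{1}{3}I)f(p)\,dp$, which is a continuous finite-rank linear operator. The required properties of $\mathcal{F}$ on $U$, namely boundedness below, coercivity with respect to weak $L^1$ convergence, weak $L^1$ lower semicontinuity, and strict convexity on each level set $A(Q)$, follow from \Cref{lemmaNonEmptyDomain}, \Cref{lemmaLowerSemiContinuous}, \Cref{theoremExistence} and the strict convexity used in \Cref{propFormOfJ}. The right inverse $T^{-1}$ of Definition~3.16 is exactly the map $Q \mapsto f_Q$ constructed in \Cref{propFormOfJ}, and $J$ coincides with the macroscopic functional of \Cref{propFormOfJ} also.

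Next, for each candidate Banach space $X \in \{L^q(\s2) : q \in [1,\infty]\} \cup \{C^\alpha(\s2) : \alpha < 1\} \cup \{W^{1,r}(\s2) : r \in [1,\infty)\}$ I would verify the three conditions required by \Cref{definitionsForLocalMins}. Condition (i), that $U_\mathcal{F} \subset X$, is immediate from the explicit form $f_Q(p) = \tfrac{1}{Z}\exp(\Lambda p\cdot p)\max(Qp\cdot p - \eta, 0)$, which is Lipschitz continuous on $\s2$ and therefore lies in $W^{1,r}$ for every $r \in [1,\infty)$, hence in all of the listed spaces. Condition (ii), that the $X$-topology restricted to $X \cap V$ is at least as strong as the $V = L^1$ topology, is automatic because $\s2$ has finite Hausdorff measure. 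Condition (iii), continuity of $T^{-1}$ with respect to the $X$-topology, is precisely the content of \Cref{corollaryfContinuous}.

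With these verifications in hand, \Cref{corollaryNoLavrentiev} applies with any two admissible choices $X, X'$, yielding the pairwise equivalence of local minimality in all of the listed topologies. The first claim of the theorem then follows because a $W^{1,q}$-local minimiser is in particular an $L^1$-local minimiser (since for any probability densities on $\s2$, $W^{1,q}$ convergence implies $L^1$ convergence), and the equivalence promotes the converse. The main obstacle is purely organisational: checking that no candidate space violates condition (iii) at the endpoints, in particular $q = \infty$ and the Hölder case, which is handled at the end of \Cref{corollaryfContinuous} where the $W^{1,q}$ continuity is upgraded to $C^\alpha$ and $L^\infty$ by Sobolev embedding on $\s2$.
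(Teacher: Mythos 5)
Your proposal follows the same route as the paper, which simply cites \Cref{corollaryNoLavrentiev} together with \Cref{corollaryfContinuous}; your spelled-out verification of the hypotheses in \Cref{definitionsForLocalMins}, in particular the observation that $f_Q(p)=\tfrac{1}{Z}\exp(\Lambda p\cdot p)\max(Qp\cdot p-\eta,0)$ is Lipschitz on the compact sphere and hence lies in every candidate space, is accurate and is exactly the content left implicit in the paper's one-line proof.

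One small logical slip in the final paragraph: you justify ``$W^{1,q}$-local minimiser $\Rightarrow$ $L^{1}$-local minimiser'' by remarking that $W^{1,q}$ convergence implies $L^{1}$ convergence, but that topological fact gives only the \emph{reverse} (trivial) direction. Since $W^{1,q}$ is a finer topology, an $L^{1}$-neighbourhood is a $W^{1,q}$-neighbourhood, so $L^{1}$-local minimality implies $W^{1,q}$-local minimality, not the other way round. The implication you want is the nontrivial one, and it comes directly from the two-sided equivalence in \Cref{corollaryNoLavrentiev}; no separate topological argument is needed, and the one you offer is backwards. Since the equivalence already gives both directions this does not break the proof, but the parenthetical justification should be dropped.
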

\begin{proof}
This follows from \Cref{corollaryNoLavrentiev} and \Cref{corollaryfContinuous}.
\end{proof}

\begin{theorem}[Euler-Lagrange equation for $L^p$ local minimisers]\label{theoremLocalMinsEQuiv}
Assume that $f \in \ps2$ is an $L^p$-local minimiser of $\mathcal{F}$. Then $f$ satisfies the Euler-Lagrange equation
\begin{equation}
\begin{split}
f(p)=&\frac{1}{Z}\exp(\Lambda p\cdot p)\max(Qp\cdot p-\eta,0),\\
Q=& \int_{\mathbb{S}^2}f(p)\left(p\otimes p-\frac{1}{3}I\right)\,dp,\\
\Lambda =& \int_{\mathbb{S}^2}\frac{f(p)}{Qp\cdot p-\eta}\left(p\otimes p-\frac{1}{3}I\right)\,dp.
\end{split}
\end{equation}
Furthermore, $(Q,\Lambda)$ is a critical point of the dual function $F$.
\end{theorem}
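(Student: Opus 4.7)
The plan is to apply the abstract framework of Propositions \ref{propMinsInMinSet}--\ref{propQMinF} in the concrete setting $V = L^1(\s2)$, $U = \ps2$, $X = L^p(\s2)$, taking $T$ to be the Q-tensor map $Tf = \int_{\s2}(p\otimes p - \tfrac{1}{3}I)f(p)\,dp$ so that $D = \mathrm{dom}(J)$. The hypotheses of Definition \ref{definitionsForLocalMins} have already been verified elsewhere in the paper: coercivity and lower boundedness appear in the proof of Proposition \ref{theoremExistence}; weak-$L^1$ lower semicontinuity is Lemma \ref{lemmaLowerSemiContinuous}; strict convexity of $\mf$ on each fibre $A(Q)$ is immediate because, with $Q$ fixed, the logarithmic interaction term is a \emph{linear} functional of $f$ and the entropy is strictly convex; and continuity of $T^{-1}:\mathrm{dom}(J)\to L^p(\s2)$ is exactly Corollary \ref{corollaryfContinuous}.

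Given these ingredients, I would argue as follows. Since $f$ is an $L^p$-local minimiser, Proposition \ref{propMinsInMinSet} gives $f = T^{-1}Tf = f_Q$ for $Q := Tf \in \mathrm{dom}(J)$; in particular Proposition \ref{propFormOfJ} supplies the closed-form representation
\begin{equation*}
f(p) = \tfrac{1}{Z}\exp(\Lambda(Q) p\cdot p)\max(Qp\cdot p - \eta, 0),
\end{equation*}
with $\Lambda := \Lambda(Q)$ the dual maximiser, establishing the first two relations of the Euler-Lagrange system. Next, Proposition \ref{propFMinQ} yields that $Q$ is a local minimiser of $J$ on $\mathcal{Q}$. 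Because $\mathrm{dom}(J)$ is open (Proposition \ref{propAdmissibleQ}) and $J \in C^1(\mathrm{dom}(J))$ (Proposition \ref{propJDerivatives}), the first-order condition $\partial_Q J(Q) = 0$ is unconstrained, and substituting the explicit derivative formula from Proposition \ref{propJDerivatives} yields precisely
\begin{equation*}
\Lambda = \int_{\s2}\frac{f(p)}{Qp\cdot p - \eta}\left(p\otimes p - \tfrac{1}{3}I\right)\,dp,
\end{equation*}
which is the third relation.

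For the dual-criticality statement, by construction $\Lambda = \Lambda(Q)$ maximises the smooth concave function $\lambda \mapsto F(Q,\lambda)$, so $\partial_\lambda F(Q,\Lambda) = 0$. Combining the identity $J(Q) = F(Q,\Lambda(Q))$ with the chain rule and the vanishing of $\partial_\lambda F$ at $(Q,\Lambda(Q))$ gives $\partial_Q F(Q,\Lambda) = \partial_Q J(Q) = 0$, so $(Q,\Lambda)$ is indeed a critical point of $F$. The only genuine obstacle here is bookkeeping: one must confirm that $\ps2$ (with its unit-mass affine constraint) and the finite-rank map $T$ into $\sm3$ really fit the Banach-space template of Definition \ref{definitionsForLocalMins}, and that the local minimiser $f$ has finite energy so that Proposition \ref{propMinsInMinSet} applies --- the latter being automatic since $\mf(f)\leq \mf(g)<+\infty$ for any admissible $g$ nearby in $L^p$, which exists by Proposition \ref{theoremExistence}.
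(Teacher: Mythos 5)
Your proof is correct and takes essentially the same route as the paper: the paper's proof is the one-liner ``This is a consequence of \Cref{propQMinF} by the same argument as \Cref{theoremGlobalMinsEquivalent},'' and what you have written is precisely that argument unpacked --- \Cref{propMinsInMinSet} to identify $f=f_Q$, \Cref{propFMinQ} to pass to a local minimiser of $J$, openness of $\mathrm{dom}(J)$ plus \Cref{propJDerivatives} for the first-order condition, and \Cref{propFormOfJ} for the dual criticality. The only wobble is the last sentence: \Cref{theoremExistence} provides \emph{some} finite-energy distribution, not one arbitrarily $L^p$-close to a given $f$ (and for $\eta\geq 0$ the effective domain is not $L^p$-dense), so the justification that $\mf(f)<+\infty$ doesn't quite go through as written --- but the finiteness of $\mf(f)$ is the standing implicit convention for a local minimiser, and is equally tacit in the paper's own \Cref{propMinsInMinSet}, so this is not a genuine gap.
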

\begin{proof}
This is a consequence of \Cref{propQMinF} by the same argument as \Cref{theoremGlobalMinsEquivalent}.
\end{proof}

\begin{corollary}
The uniform state $f_U(p)=\frac{1}{4\pi}$ is an $L^1$-local minimiser for all $\eta<0$. 
\end{corollary}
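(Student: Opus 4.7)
The plan is to chain together three earlier results: the convex analysis at the isotropic state, the openness of the smooth part of $\mathrm{dom}(\mathcal{F})$ in the $L^\infty$ topology, and the equivalence of $L^p$-local minimality across $p$.

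First I would observe that when $\eta<0$, the isotropic state $f_U$ lies in $\mathcal{P}_+(\mathbb{S}^2)\cap \mathrm{dom}(\mathcal{F})$, since $f_U$ is constant and $\mathcal{F}(f_U)<+\infty$ precisely for $\eta<0$ (as noted in the discussion following \Cref{propositionNonConvexDomain}). Recall further that $\mathcal{P}_+(\mathbb{S}^2)\cap\mathrm{dom}(\mathcal{F})$ is open in $\mathcal{P}(\mathbb{S}^2)$ with respect to the $L^\infty$ topology, so there is an $L^\infty$-neighbourhood of $f_U$ entirely contained in this set on which one may take arbitrary admissible variations.

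Next I would invoke the variational structure on this neighbourhood. From \Cref{propIsotropicGeneral}, $\mathcal{F}$ is convex on $\mathcal{P}_+(\mathbb{S}^2)\cap\mathrm{dom}(\mathcal{F})$ and the first variation vanishes at $f_U$, so $f_U$ is a minimiser on this $L^\infty$-open set. For $\eta<0$ with $\eta\neq -\tfrac{2}{15}$ this is already given by \Cref{propositionIsotropicStateGlobalMin} as a (strict) global minimiser on the restricted set, and for the remaining case $\eta=-\tfrac{2}{15}$ it follows from \Cref{corollaryIsotropicDrama}, which says exactly that $f_U$ remains a (non-strict) $L^\infty$-local minimiser. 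Combining these two, $f_U$ is an $L^\infty$-local minimiser of $\mathcal{F}$ for every $\eta<0$.

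Finally, I would apply \Cref{theoremNoLavrentiev}, which establishes that $L^q$-local minimality is independent of $q\in[1,\infty]$. Applying this equivalence with $q=\infty$ and $q=1$ upgrades the $L^\infty$-local minimality obtained in the previous paragraph to $L^1$-local minimality, completing the proof. No step is really an obstacle here: the work has been done in the earlier results, and this corollary is essentially the act of recording that the no-Lavrentiev phenomenon lets us transport the elementary convexity argument on $\mathcal{P}_+$ to the much weaker $L^1$ topology.
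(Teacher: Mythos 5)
Your proposal is correct and follows essentially the same route as the paper: establish $L^\infty$-local minimality of $f_U$ via \Cref{propIsotropicGeneral} and \Cref{corollaryIsotropicDrama} (with the convexity/vanishing first variation argument handling $\eta\neq-\tfrac{2}{15}$ and the degenerate case $\eta=-\tfrac{2}{15}$ handled separately), then transfer this to $L^1$-local minimality via \Cref{theoremNoLavrentiev}. The only slight difference is that you spell out the case split explicitly, which the paper compresses into a single citation of the two results.
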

\begin{proof}
In \Cref{propIsotropicGeneral} and \Cref{corollaryIsotropicDrama} it is shown that $f_U$ is an $L^\infty$ local minimiser if $\eta <0$, so by \Cref{theoremNoLavrentiev} it is an $L^1$ local minimiser also. 
\end{proof}

\begin{corollary}
No $L^1$-local minimisers can be found by solving the equation 
\begin{equation}\left.\frac{d}{dt}\mathcal{F}(f^*+t\phi)\right|_{t=0}=0\end{equation}
for all $\phi \in L^\infty(\supp(f))$ with $\int_{\supp(f)}\phi(p)\,dp=0$, with the exception of the isotropic state when $\eta<0$ and the set $\left\{\frac{1+u}{4\pi}: u \in V\right\}\cap \mathcal{P}(\mathbb{S}^2)$ as given in \Cref{propEta215Drama}
\end{corollary}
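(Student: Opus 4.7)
The plan is to combine \Cref{theoremLocalMinsEQuiv} with \Cref{corollaryOnlyIsotropicVariation} and \Cref{propEta215Drama} through a case split on $\supp(f^*)$, while arguing that in the proper-support case the vanishing variation condition degenerates to a tautology and cannot be used as a genuine characterisation of the minimiser.

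First I would suppose $f^*$ is an $L^1$-local minimiser satisfying $\left.\frac{d}{dt}\mathcal{F}(f^*+t\phi)\right|_{t=0}=0$ for every admissible $\phi\in L^\infty(\supp(f^*))$ with $\int\phi=0$. By \Cref{theoremLocalMinsEQuiv}, $f^*(p)=\frac{1}{Z}\exp(\Lambda p\cdot p)\max(Qp\cdot p-\eta,0)$ for some $Q,\Lambda\in\sm3$ satisfying the critical-point relations. The proof then splits according to whether $\supp(f^*)=\s2$ or $\supp(f^*)\subsetneq\s2$.

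In the full-support case, $v_{\min}(Q)>\eta$, so $f^*$ is continuous and strictly positive on the compact sphere, placing it in $\mathcal{P}_+(\s2)\cap\text{dom}(\mathcal{F})$ and forcing $\eta<0$. Since $L^\infty(\supp(f^*))=L^\infty(\s2)$ in this regime, the vanishing variation hypothesis is exactly that of \Cref{corollaryOnlyIsotropicVariation}. That corollary forces $f^*$ to be the isotropic state when $\eta\neq -\tfrac{2}{15}$, and \Cref{propEta215Drama} supplies the additional family $\{(1+u)/(4\pi):u\in V\}\cap\mathcal{P}(\s2)$ when $\eta=-\tfrac{2}{15}$. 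Both lie in the exceptional list stated.

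In the proper-support case, $f^*$ is continuous on $\s2$ and decays to zero at $\partial E_Q$, while bounded variations $\phi\in L^\infty(\supp(f^*))$ cannot in general be dominated by $f^*$ there: for any $\phi$ whose essential infimum is negative in a neighbourhood of $\partial E_Q$, one has $f^*+t\phi<0$ on a set of positive measure for every $t>0$, so the one-sided derivative is not well defined. Restricting to those $\phi$ for which the derivative does exist, substitution of the E-L form into the first-variation integral (using $\ln f^*=-\ln Z+\Lambda p\cdot p+\ln(Qp\cdot p-\eta)$ on $E_Q$) causes the logarithmic terms to cancel, leaving the quantity $\Lambda\cdot A-A\cdot\Lambda=0$, where $A=\int\phi(p\otimes p-\tfrac{1}{3}I)\,dp$. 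Thus the condition collapses to a trivial identity already enforced by the E-L equation and provides no additional information that could be used to identify $f^*$ beyond what the auxiliary-problem machinery already supplies.

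The main obstacle is formalising the proper-support case: the condition ``for all $\phi$'' is not literally well posed, since most $\phi\in L^\infty(\supp(f^*))$ produce inadmissible perturbations. I would handle this by combining the admissibility failure above with the cancellation calculation, concluding that whatever fragment of the condition survives is tautologically implied by the E-L form and does not single out $f^*$ among candidates. Hence no minimiser with proper support is ``found'' by this variational route, and the only $L^1$-local minimisers identifiable by the method are the full-support exceptions listed in the statement.
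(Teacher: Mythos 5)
Your treatment of the full-support case is sound and matches the paper's logic. The gap is in the proper-support case, where your two observations — positivity failure near $\partial E_Q$, and the formal cancellation $\Lambda\cdot A - A\cdot\Lambda = 0$ for the surviving $\phi$ — do not combine into a proof, and in fact lead to the wrong conclusion. You write that ``the condition collapses to a trivial identity'' and so cannot single out $f^*$. But if the condition really were vacuously true on a non-trivial class of $\phi$, that would mean every such $f^*$ \emph{does} satisfy the vanishing variation condition, which is the opposite of what needs to be shown. The statement requires exhibiting, for each non-trivial $L^1$-local minimiser, a concrete admissible $\phi$ for which the one-sided derivative is non-zero.

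The paper does exactly this, and the mechanism it uses is the one your argument is blind to. Take $\phi$ compactly supported in $\operatorname{int}\supp(f^*)$ with $A = \int \phi\bigl(p\otimes p - \tfrac{1}{3}I\bigr)\,dp \neq 0$. Because $\phi$ is supported away from $\partial E_{Q^*}$, there is no positivity failure at all: $f^* + t\phi \geq 0$ for small $t>0$. Your first observation therefore does not apply. However, the Q-tensor of $f^*+t\phi$ is $Q^*+tA$, and one can arrange for $Ap_0\cdot p_0 < 0$ at some $p_0 \in \partial E_{Q^*}$, so that a positive-measure neighbourhood of $p_0$ lies in $\operatorname{int}\supp(f^*)$ but falls \emph{outside} $E_{Q^*+tA}$ for every $t>0$. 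On that set $f^*+t\phi > 0$ while $\ln\bigl((Q^*+tA)p\cdot p - \eta\bigr) = -\infty$, so $\mathcal{F}(f^*+t\phi) = +\infty$ for all $t>0$. Consequently the one-sided derivative is $+\infty$, never zero. Your formal cancellation giving $\Lambda\cdot A - A\cdot\Lambda=0$ is a computation obtained by differentiating under the integral sign in a situation where the integral is already infinite for $t>0$; it does not compute the actual limit. The lesson is that the obstruction is not $f^*+t\phi$ becoming negative at the boundary, but the logarithmic barrier migrating into the interior of $\supp(f^*)$ as $Q$ is perturbed — and this happens for \emph{every} $\phi$ with $A\neq 0$, including ones compactly supported in the interior where your positivity argument gives no information.
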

\begin{proof}
Let $f^*$ be an $L^1$-local minimiser that is not one of the exceptions given. If $Q^*$ is the corresponding Q-tensor of $f^*$, then $v_{\min}(Q)<\eta$ and $\{p\in\mathbb{S}^2:Qp\cdot p-\eta>0\}$ has positive measure. Take any $\phi \in L^\infty(\supp(f))$ with $\supp\phi$ compactly supported in $\text{int}\supp (f^*)$, and also so that $A=\int_{\supp(f)}\left(p\otimes p-\frac{1}{3}I\right)\phi(p)\,dp \neq 0$. Take any $p_0 \in \mathbb{S}^2$ so that $Qp_0\cdot p_0-\eta=0$ and $Ap_0\cdot p_0\neq 0$, without loss of generality taking $Ap\cdot p<0$.  Take $t>0$. Then $Qp_0\cdot p_0+tAp_0\cdot p_0 -\eta < Qp_0\cdot p_0 -\eta =0$, so there is a neighbourhood $B_t$ of $p_0$ so that $Qp\cdot p+tAp\cdot p-\eta<0$ for all $p \in B_t$. Since $p_0\in\{p \in \mathbb{S}^2:Qp\cdot p=\eta \}= \partial\supp (f^*)$, this means that $B_t\cap \supp(f^*)$ and $B_t\cap \text{int}\supp (f)$ have positive measure for all $t$. Since $\phi$ is supported on the interior of $\text{supp}(f^*)$, this implies that for $t$ sufficiently small, $f^*+t\phi$ is positive on $\text{int}\supp(f^*)$. Therefore the energy has a contribution 
\begin{equation}\mathcal{F}(f+t\phi)\geq c + \int_{B_t\cap \text{int}\supp(f^*)}(f(t)+t\phi )\ln (Qp\cdot p+tAp\cdot p-\eta)\,dp=+\infty.\end{equation}
In particular, since $\mathcal{F}(f+t\phi)=+\infty$ for $t>0$, the limit
\begin{equation}\lim\limits_{t \to 0}\frac{\mathcal{F}(f+t\phi)-\mf(f)}{t}\end{equation}
can be at best infinite, and certainly non-zero
\end{proof}

We can also provide a similar result, which can loosely be interpretted as saying that, the function $\mathcal{F}$ is not locally convex at any non-trivial minimisers. This rules out, for example, finding variational inequalities in local regions of the domain. First we include a lemma.

\begin{lemma}
Let $U_1 \subset \mathbb{S}^2$ be open, and $U_2 \subset\text{SO}(3)$ be open, with $I \in U_2$. Then if $U_2U_1=\{Rp:R\in U_2,\, p \in \mathbb{S}^2\}=U_1$, either $U_1=\emptyset$ or $U_1 =\mathbb{S}^2$.
\end{lemma}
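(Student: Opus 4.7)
The plan is to exploit the group structure of $\mathrm{SO}(3)$, specifically its connectedness and the fact that it acts transitively on $\mathbb{S}^2$. The hypothesis $U_2 U_1 = U_1$ means that every element of the open neighbourhood $U_2$ of $I$ maps $U_1$ into itself; I want to bootstrap this to $\mathrm{SO}(3) U_1 = U_1$, at which point transitivity forces $U_1$ to be either empty or all of $\mathbb{S}^2$.

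First I would symmetrise. Set $V = U_2 \cap U_2^{-1}$, which is open, contains $I$, and satisfies $V = V^{-1}$. For $R \in V$, both $R$ and $R^{-1}$ lie in $U_2$, so $R U_1 \subset U_1$ and $R^{-1} U_1 \subset U_1$. Applying $R$ to the second inclusion gives $U_1 \subset R U_1$, hence $R U_1 = U_1$. Thus $V$ consists entirely of symmetries of $U_1$.

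Next I would show that the subgroup $G = \langle V \rangle \subset \mathrm{SO}(3)$ generated by $V$ is all of $\mathrm{SO}(3)$. This uses a standard topological group argument: $G = \bigcup_{n \geq 1} V^n$, and each $V^n$ is open because in a topological group the product of an open set with any set is open. Therefore $G$ is an open subgroup. Any open subgroup of a topological group is also closed, since its complement is a union of translated (open) cosets. Because $\mathrm{SO}(3)$ is connected and $G$ is a nonempty clopen subset, $G = \mathrm{SO}(3)$. Writing an arbitrary $g \in \mathrm{SO}(3)$ as a finite product $g = R_1 \cdots R_n$ with each $R_i \in V$, iterating the identity $R_i U_1 = U_1$ gives $g U_1 = U_1$.

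Finally, if $U_1$ is nonempty, pick $p_0 \in U_1$. For any $q \in \mathbb{S}^2$ there exists $g \in \mathrm{SO}(3)$ with $g p_0 = q$, by transitivity of the rotation action on the sphere. Then $q = g p_0 \in g U_1 = U_1$, so $U_1 = \mathbb{S}^2$. The only step requiring any care is the passage from the one-sided inclusion $U_2 U_1 \subset U_1$ to a genuine group action, which is resolved by the symmetrisation in the first step; after that, everything reduces to standard facts about connected topological groups.
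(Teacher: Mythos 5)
Your proof is correct, but it takes a genuinely different route from the paper's. You work entirely in the group $\mathrm{SO}(3)$: symmetrise $U_2$ to $V = U_2 \cap U_2^{-1}$ so that each $R \in V$ gives $RU_1 = U_1$ (not merely $\subset$), show the subgroup $\langle V \rangle$ is open, invoke the standard fact that an open subgroup is closed, and use connectedness of $\mathrm{SO}(3)$ to conclude $\langle V \rangle = \mathrm{SO}(3)$; transitivity of the action then finishes. The paper instead stays on the sphere: it uses that $U_2$ contains a ball $B(I,\epsilon)$ around the identity, and by bi-invariance of the metric on $\mathrm{SO}(3)$ this yields a uniform $\delta > 0$ with $B(p,\delta) \subset U_2 p$ for every $p$, so $U_1$ contains a $\delta$-neighbourhood of each of its points and hence of any limit of points of $U_1$; thus $U_1$ is also closed, and connectedness of $\mathbb{S}^2$ forces $U_1 \in \{\emptyset, \mathbb{S}^2\}$. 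The paper's argument is shorter and self-contained but leans on the sphere's metric geometry and a uniform-radius estimate; yours is more structural and generalises immediately to any connected topological group acting transitively on a topological space, at the small cost of needing the symmetrisation step to upgrade $U_2 U_1 \subset U_1$ to a true group action. Both use connectedness, but of different spaces ($\mathrm{SO}(3)$ for you, $\mathbb{S}^2$ for the paper).
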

\begin{proof}
First we will show that $U_2U_1=U_1$ is closed. Let $q \in \partial U_2U_1=\partial U_1$. Therefore there exists a sequence $q_j \in U_1$ so that $q_j \to q$. Furthermore, since $U_2$ is open and contains the identity, there exists some $\delta>0$, independent of $p \in \mathbb{S}^2$, so that $B(p,\delta)\subset U_2p$. In particular, $B(q_j,\delta)\subset U_2 q_j \subset U_2U_1=U_1$, and furthemore this gives $B(q,\delta)\subset U_1$. Therefore $q \in U_1$, and in particular $\partial U_1 \subset U_1$. Therefore $U_1$ is closed, and also open by assumption, meaning that since $\mathbb{S}^2$ is connected either $U_1=\emptyset$ or $U_1=\mathbb{S}^2$. 
\end{proof}

\begin{proposition}\label{propNotLocallyConvex}
Let $f^*\in\mathcal{P}(\mathbb{S}^2)$ be an $L^1$-local minimiser of $\mathcal{F}$, which is neither the isotropic state when $\eta<0$ nor in the set $\left\{\frac{1+u}{4\pi}: u \in V\right\}\cap \mathcal{P}(\mathbb{S}^2)$ as given in \Cref{propEta215Drama}. Then for all $\epsilon>0$ there exists some $f \in \text{dom}(\mathcal{F})$ with $||f^*-f||_\infty<\epsilon$ and $\xi \in (0,1)$ so that $\mathcal{F}((1-\xi)f^*+\xi f)=+\infty$. 
\end{proposition}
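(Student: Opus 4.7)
By Theorem~\ref{theoremLocalMinsEQuiv}, $f^*$ satisfies the Euler--Lagrange equation
\[
f^*(p) = \frac{1}{Z}\exp(\Lambda p\cdot p)\max(Q^*p\cdot p - \eta, 0),
\]
and the first step I would establish is that $v_{\min}(Q^*)\leq\eta$. If instead $v_{\min}(Q^*)>\eta$, the Euler--Lagrange form puts $f^*\in\mathcal{P}_+(\s2)\cap\text{dom}(\mathcal{F})$; using Theorem~\ref{theoremNoLavrentiev} to view $f^*$ as an $L^\infty$-local minimiser and the strict convexity on this set from \Cref{propIsotropicGeneral}, the identification of all such local minima in \Cref{propositionIsotropicStateGlobalMin} and \Cref{propEta215Drama} forces $f^*$ into the list of excluded states, contradicting the hypothesis.

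In the generic case $v_{\min}(Q^*)<\eta$, the zero set of $f^*$ has non-empty interior. I would take $f=f_\theta$ where $f_\theta(p)=f^*(R_\theta p)$ for a small rotation $R_\theta$ outside the (finite) stabiliser of $Q^*$, so that $Q_\theta := R_\theta^T Q^* R_\theta \neq Q^*$. Frame indifference gives $f_\theta\in\text{dom}(\mathcal{F})$, and the Lipschitz continuity of $f^*$ gives $\|f_\theta-f^*\|_\infty \leq L\theta<\epsilon$ for $\theta$ small. The set $S_\theta := R_\theta^{-1}\supp(f^*)\setminus\supp(f^*)$ has positive measure, being a thin strip along $\partial\supp(f^*)$, and $Q^*p\cdot p\leq\eta<Q_\theta p\cdot p$ on it. For $g=(1-\xi)f^*+\xi f_\theta$, the inequality $Q_gp\cdot p<\eta$ rearranges to $\xi/(1-\xi) < (\eta-Q^*p\cdot p)/(Q_\theta p\cdot p - \eta)$; since the right-hand side takes all values in $(0,\infty)$ as $p$ traverses $S_\theta$, any sufficiently small $\xi$ (for instance $\xi=1/4$) renders this strict on a subset of $S_\theta$ of positive measure, on which $g(p)=\xi f_\theta(p)>0$, giving $\mathcal{F}(g)=+\infty$.

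In the boundary case $v_{\min}(Q^*)=\eta$ with unit eigenvector $e^*$, I would take $f=cf^*\chi_{\{|p\cdot e^*|<1-\delta\}}$ for small $\delta>0$ with $c$ the normalising constant (and an obvious modification to a tubular neighbourhood of the equator in the two-dimensional-eigenspace case). Near $\pm e^*$ the function $f^*$ vanishes quadratically, from which the excluded mass $m=\int_{\{|p\cdot e^*|\geq 1-\delta\}} f^*\,dp$ is of order $\delta^2$ and an expansion yields $(Q_f-Q^*)e^*\cdot e^* \sim m(\eta-2/3) < 0$, an order-$\delta^2$ downward shift. The crucial check is that the margin $Q^*p\cdot p - \eta$ on $\partial\supp(f)$ is of order $\delta$, which dominates the order-$\delta^2$ perturbation so that $Q_fp\cdot p>\eta$ throughout $\supp(f)$, placing $f\in\text{dom}(\mathcal{F})$; also $\|f-f^*\|_\infty = O(\delta)<\epsilon$ for $\delta$ small. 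For any $\xi\in(0,1)$, $Q_ge^*\cdot e^* < \eta$ strictly, so by continuity $\{Q_gp\cdot p<\eta\}$ contains a spherical cap about each of $\pm e^*$; on such a cap $f\equiv 0$ so $g=(1-\xi)f^*>0$ off the two eigenpoints, delivering $\mathcal{F}(g)=+\infty$.

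The main technical obstacle is the asymptotic matching in case~(b): the truncation yields only a quadratic-order ($\delta^2$) shift of the minimum eigenvalue of $Q_f$, and I must verify simultaneously that this is small enough, against the linear-order ($\delta$) margin at $\partial\supp(f)$, to keep $f$ itself in $\text{dom}(\mathcal{F})$, yet large enough to drag $v_{\min}(Q_g)$ strictly below $\eta$ and thus exit the domain on a positive-measure cap around $\pm e^*$.
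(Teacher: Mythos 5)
Your proof is correct in its broad outline and, in the generic case, follows the same strategy as the paper: use the Euler--Lagrange characterisation $f^*=f_{Q^*}$, perturb by a small rotation $R$ so that $E_{RQ^*R^T}\neq E_{Q^*}$ with positive-measure difference, and observe that a convex combination of $f^*$ with the rotated optimal distribution must be positive on a positive-measure set outside the admissible set of its own Q-tensor, hence has infinite energy. (The paper takes $\xi$ near~$1$; you note that any $\xi\in(0,1)$ works because $(\eta-Q^*p\cdot p)/(Q_\theta p\cdot p-\eta)$ sweeps $(0,\infty)$ on the boundary strip. Both are fine.)

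Where you genuinely diverge from, and improve upon, the paper's argument is the explicit case split on $v_{\min}(Q^*)$. The paper reduces everything to ``$E_{Q^*}$ open and $\neq\s2$'' and then asserts that because $E_{Q^*}$ and $RE_{Q^*}$ are open and distinct, their symmetric difference has positive measure. That implication is false when $E_{Q^*}$ is dense, which is precisely the boundary case $v_{\min}(Q^*)=\eta$ (then $\s2\setminus E_{Q^*}$ is a pair of antipodal points or a great circle, and $RE_{Q^*}\triangle E_{Q^*}$ is null). Your truncation argument for that case is sound: the removed mass is $O(\delta^2)$ while the margin $Q^*p\cdot p-\eta$ along the truncation boundary is $O(\delta)$, so $f\in\text{dom}(\mathcal{F})$ and $\|f-f^*\|_\infty=O(\delta)$, yet $(Q_f-Q^*)e^*\cdot e^*<0$ strictly, so every convex combination $g$ with $\xi>0$ has $Q_g e^*\cdot e^*<\eta$ and hence an open inadmissible cap on which $g>0$ a.e. Your claim that this handles a case the paper's written argument does not justify is correct, and is the most valuable part of your proposal. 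Two minor slips that do not affect the argument: the stabiliser of $Q^*$ in $\text{SO}(3)$ is not generally finite (it is one-dimensional when $Q^*$ is uniaxial), though small rotations outside it still exist whenever $Q^*\neq 0$; and the positive-measure claim for $S_\theta$ in the case $v_{\min}(Q^*)<\eta$, while true, is asserted rather than proved --- you would want to exhibit an interior point of $\{Q^*p\cdot p<\eta\}$ that lands in $R_\theta^{-1}E_{Q^*}$, which is straightforward but should be said.
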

\begin{proof}
Let $Q^*$ denote the Q-tensor of $f^*$. First we note that since $f^*$ is a non-trivial minimiser, $f^*(p)>0$ if and only if $p \in E_{Q^*}$, and $E_{Q^*} \neq \mathbb{S}^2$. Since $E_{Q^*}$ is open and not $\mathbb{S}^2$, this means that if we let $B_\delta$ denote the ball of radius $\delta$ about the identity in $\text{SO}(3)$, then there exists some $R \in B_\delta$ so that $RE_{Q^*} \neq E_{Q^*}$. Take $\delta$ sufficiently small so that $||f_{RQR^T}-f^*||_\infty<\epsilon$ for all $R \in B_\delta$. Furthermore, since the sets are open, this means that their symmetric difference, $\big(RE_{Q^*}\setminus E_{Q^*}\big) \cup \big(E_{Q^*} \setminus RE_{Q^*}\big)$ has positive measure. This implies at least one term in the union has positive measure, which we take to be $RE_{Q^*}\setminus E_{Q^*}$, with the proof for the alternative following identically. 

Let $Q=RQ^*R^T$. We note that $RE_{Q^*}=E_{RQ^*R^T}=E_Q$ and let $f=f_{Q}$. In this case, we have that $f^\xi(p)=\xi f(p)+(1-\xi)f^*(p)>0$ on $E_{Q^*} \cup E_{Q}$. In order for $f^\xi$ to have finite energy, it is required that $f^\xi(p)$ only be positive on $E_{\xi Q+(1-\xi)Q^*}$ up to a set of measure zero. Therefore, if $\mathcal{H}^2\left((f^\xi)^{-1}(0,\infty)\setminus E_{\xi Q + (1-\xi)Q^*}\right)>0$, $f^\xi$ has infinite energy. This can then be estimated as 
\begin{equation}
\begin{split}
&\mathcal{H}^2\left((f^\xi)^{-1}(0,\infty)\setminus E_{\xi Q + (1-\xi)Q^*}\right)\\
=&\mathcal{H}^2\left(\left(E_Q\cup E_{Q^*}\right)\setminus E_{\xi Q + (1-\xi)Q^*}\right)\\
=& \mathcal{H}^2\left(\left(E_Q\setminus E_{\xi Q + (1-\xi)Q^*}\right)\cup \left(E_{Q^*}\setminus E_{\xi Q + (1-\xi)Q^*}\right)\right)\\
\geq & \mathcal{H}^2\left(E_Q\setminus E_{\xi Q + (1-\xi)Q^*}\right).
\end{split}
\end{equation}
Using the pseudo-Haar condition we have that the limit can be taken and give 
\begin{equation}
\begin{split}
&\mathcal{H}^2\left((f^\xi)^{-1}(0,\infty)\setminus E_{\xi Q + (1-\xi)Q^*}\right)\\
\geq &\lim\limits_{\xi \to 1} \mathcal{H}^2\left(E_Q\setminus E_{\xi Q + (1-\xi)Q^*}\right)\\
=& \mathcal{H}^2(E_Q\setminus E_{Q^*}),
\end{split}
\end{equation}
which was taken to have positive measure. This implies for $\xi$ sufficiently close to $1$, $\mathcal{F}(f^\xi)=+\infty$.
\end{proof}

One might ask the question of how to interpret a critical point of $J$ in terms of the microscopic model. In particular, the numerical studies in \cite{zheng2016density} provide evidence for the existence of critical points of $J$ that are not local minimisers. The inability to take arbitrary $L^\infty$ variations about $f_Q$ when $Q$ is a non-trivial critical point of $J$ means that one cannot easily say in what sense $f_Q$ should be a critical point of $\mathcal{F}$. The next result shows that the critical points of $J$ are in one-to-one correspondence with points $f_0$ where, for a certain family of curves $f_t$ in $\text{dom}(\mf)$, 
\begin{equation}\left.\frac{d}{dt}\mathcal{F}(f_t)\right|_{t=0}=0\end{equation} 
Without the toolkit developed in this work however it is unclear if all local minimisers of $\mathcal{F}$ can be found using such curves, but the results presented here answer the question in the affirmative.  First, we will need a lemma concerning the differentiability of the map $Q \mapsto f_Q$. 

\begin{lemma}\label{lemmaQToFC1}
Let $Q \in \text{dom}(J)$, $p \in \{ q \in \s2 : Qq\cdot q>\eta\}$. For a function $a:\s2 \to \mathbb{R}$, and $f \in \ps2$, define $ [ a (p)]_f=a(p)-\int_{\s2}a(q)f(q)\,dq$. Then 
\begin{equation}\frac{\partial f_{Q}(p)}{\partial Q}= f_Q(p)\left[\sum\limits_{i,j=1}^3\frac{\partial \Lambda_{ij}}{\partial Q}p_ip_j+ \frac{1}{Qp\cdot p-\eta}\left(p\otimes p-\frac{1}{3}I\right)\right]_{f_Q}.\end{equation}
In particular, the map $(Q,p)\mapsto \frac{\partial f_Q(p)}{\partial Q}$ is continuous for $\bigcup\limits_{Q \in \text{dom}(J)}\{Q\}\times E_Q$.
\end{lemma}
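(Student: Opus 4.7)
The strategy is logarithmic differentiation of the closed-form expression
\[
f_Q(p) = \frac{1}{Z(Q)}\exp(\Lambda(Q)\,p\cdot p)(Qp\cdot p - \eta), \qquad Z(Q) = \int_{E_Q}\exp(\Lambda(Q)\,q\cdot q)(Qq\cdot q-\eta)\,dq,
\]
valid on the open set where $Qp\cdot p > \eta$. The first step is to record that $\Lambda$ is $C^1$ by \Cref{propJC1} and $Z$ is $C^1$ in $Q$ by \Cref{propApproxC1}; together these make $f_Q(p)$ differentiable in $Q$ at every such $p$, with all the ingredients already jointly continuous on the set of interest.

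Taking logarithms, I would write $\ln f_Q(p) = -\ln Z(Q) + \Lambda(Q)p\cdot p + \ln(Qp\cdot p - \eta)$ and differentiate in $Q \in \sm3$ (so that the derivative of $Qp\cdot p$ is the traceless projection $p\otimes p - \tfrac{1}{3}I$). This gives
\[
\frac{1}{f_Q(p)}\frac{\partial f_Q(p)}{\partial Q} = -\frac{\partial \ln Z}{\partial Q} + \sum_{i,j}\frac{\partial \Lambda_{ij}}{\partial Q}p_ip_j + \frac{p\otimes p - \frac{1}{3}I}{Qp\cdot p - \eta}.
\]
To convert this into the stated formula, the next step is to identify $\partial_Q \ln Z$ as an $f_Q$-average. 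Differentiating the normalisation $\int_{\s2} f_Q\,dp = 1$ gives $\int_{\s2} \partial_Q f_Q\,dp = 0$; multiplying the previous display by $f_Q(p)$, integrating over $\s2$, and solving for $\partial_Q \ln Z$ shows that it is exactly the $f_Q$-average of the bracketed quantity. Substituting back is precisely the $[\cdot]_{f_Q}$ operation and yields the claimed identity.

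For continuity, each factor on the right-hand side is jointly continuous in $(Q,p)$ on $\bigcup_{Q\in\text{dom}(J)}\{Q\}\times E_Q$: the prefactor $f_Q(p)$ and the term $(p\otimes p - \tfrac{1}{3}I)/(Qp\cdot p - \eta)$ follow from the explicit expression, the coefficient $\partial \Lambda/\partial Q$ is continuous by \Cref{propJC1}, and the $f_Q$-averages are continuous in $Q$ by \Cref{corollaryfContinuous}. The apparently singular integrand $f_Q(p)/(Qp\cdot p-\eta)$ inside those averages is in fact harmless, since on $E_Q$ it reduces to $\tfrac{1}{Z(Q)}\exp(\Lambda(Q)p\cdot p)$, a bounded continuous function of $(Q,p)$.

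The main obstacle, as anticipated, is justifying differentiation of $Z(Q)$ despite the moving domain $E_Q$; but this is exactly the content of \Cref{propApproxC1}, where the factor $(Qq\cdot q-\eta)$ vanishing on $\partial E_Q$ kills what would otherwise be a boundary contribution and delivers the clean chain-rule expression used above. Once that result is invoked, the rest is a mechanical calculation.
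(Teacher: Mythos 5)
Your proof is correct and follows essentially the same route as the paper: differentiate the closed-form expression for $f_Q$ on the open set where $Qp\cdot p>\eta$, using \Cref{propJC1} for the $C^1$ dependence of $\Lambda$ and \Cref{propApproxC1} to justify differentiating $Z$ despite the moving domain $E_Q$. The only cosmetic difference is that you organize the computation via logarithmic differentiation and recover $\partial_Q\ln Z$ from the normalisation $\int_{\s2}f_Q\,dp=1$, whereas the paper differentiates $Z$ directly; both deliver the same averaging term $[\cdot]_{f_Q}$.
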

\begin{proof}
The result is found by directly differentiating the expression 
\begin{equation}f_Q(p)=\frac{1}{Z}\exp(\Lambda(Q)p\cdot p)(Qp\cdot p-\eta),\end{equation}
on the domain $Qp\cdot p>\eta$, noting that 
\begin{equation}
\begin{split}
Z=&\int_{E_Q}\exp(\Lambda(Q)p\cdot p)(Qp\cdot p-\eta)\,dp\\
\Rightarrow \frac{\partial Z}{\partial Q}=& \int_{E_Q}\frac{\partial}{\partial Q}(\exp(\Lambda(Q)p\cdot p)(Qp\cdot p-\eta))\,dp\\
\Rightarrow \frac{\partial f_Q(p)}{\partial Q}=& \frac{1}{Z}\frac{\partial}{\partial Q}\exp(\Lambda(Q)p\cdot p)(Qp\cdot p-\eta) -\frac{1}{Z}f_Q(p)\int_{E_Q}\frac{\partial}{\partial Q}\exp(\Lambda(Q)p\cdot p)(Qp\cdot p-\eta)\,dp\\
=&f_Q(p)\left[\sum\limits_{i,j=1}^3\frac{\partial \Lambda_{ij}}{\partial Q}p_ip_j+ \frac{1}{Qp\cdot p-\eta}\left(p\otimes p-\frac{1}{3}I\right)\right]_{f_Q}.
\end{split}
\end{equation}
The continuity of this map follows from the explicit representation using that $\Lambda$ is a $C^1$ function of $Q$ and the map $Q \to f_Q$ is continuous with $L^\infty$. 
\end{proof}

\begin{definition}
Let 
\begin{equation}V=\left\{ u \in L^\infty(\s2): \int_{\s2}\left(p\otimes p-\frac{1}{3}I\right)u(p)\,dp=0\,,\, \int_{\s2}u(p)\,dp=0\right\}.\end{equation} 
We say that a map $f_t :(-\epsilon,\epsilon)\to L^1(\s2)$ satisfies assumption (A1) if there exists $Q \in C^1(-\epsilon,\epsilon;\text{dom}(J))$, $u,u_0 \in V$ so that
\begin{enumerate}
\item $f_t=f_{Q_t}+u_0+tu$.
\item $\supp(u)$ and $\supp(u_0)$ are compactly supported in $E_{Q_0}$. 
\item $f_t\in \text{dom}(\mf)$ for all $-\epsilon<t<\epsilon$.
\end{enumerate}
\end{definition}
If (1) and (2) are satisfied, then by taking $\epsilon$ sufficiently small (3) is satisfied also. 

\begin{proposition}\label{propSmoothCurveVariation}
Let $f^* \in \text{dom}(\mf)$. Then
\begin{equation}\left.\frac{d}{dt}\mathcal{F}(f_t)\right|_{t=0}=0\end{equation}
for all curves $t\mapsto f_t$ satisfying (A1) with $f_0=f^*$ if and only if its Q-tensor, $Q^*$ is a critical point of $J$, and $f^*=f_{Q^*}$.
\end{proposition}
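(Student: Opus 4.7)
The key identity I would exploit is the exact decomposition $\mathcal{F}(f_t) = J(Q_t) + D(f_t \,\|\, f_{Q_t})$, where $D(f\|g) = \int f\ln(f/g)\,dp$ is the Kullback--Leibler divergence. This holds because $u_0, u \in V$ guarantee that $f_t$ has $Q$-tensor $Q_t$, so $f_t \in A(Q_t)$; substituting $\ln f_{Q_t} = \Lambda(Q_t)p\cdot p + \ln(Q_tp\cdot p - \eta) - \ln Z_t$ on $E_{Q_t}$ and using the moment conditions defining $V$, the terms linear in $p \otimes p$ and the constant term cancel, leaving exactly the divergence. The same bookkeeping shows that $u_0 \in V$ together with $f_0 = f^*$ forces $Q_0 = Q^*$ and $u_0 = f^* - f_{Q^*}$.

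For the forward direction, assume $f^* = f_{Q^*}$ and $\nabla J(Q^*) = 0$. Then $u_0 = 0$, so $D(f_0 \| f_{Q_0}) = 0$. Since $D \geq 0$, the function $t \mapsto D(f_t\|f_{Q_t})$ attains its minimum at $t = 0$, so its derivative there vanishes; differentiability uses Lemma~\ref{lemmaQToFC1} and the compact support of $u_0 + tu$ strictly inside $E_{Q_t}$ for small $|t|$, which keeps both $f_t$ and $f_{Q_t}$ bounded away from zero on the support of the perturbation. Combined with $\tfrac{d}{dt}|_{t=0} J(Q_t) = \nabla J(Q^*) \cdot \dot Q_0 = 0$, the derivative of $\mathcal{F}(f_t)$ at $t = 0$ vanishes.

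For the reverse direction, I would first restrict to the subfamily of (A1) curves with $\dot Q_0 = 0$, so $Q_t \equiv Q^*$ and $f_t = f^* + tu$ with $u \in V$ compactly supported in $E_{Q^*}$ (and in $\{f^* > 0\}$ so that $f_t \geq 0$ for $|t|$ small). A direct differentiation yields
\begin{equation*}
\left.\frac{d}{dt}\right|_{t=0} \mathcal{F}(f_t) = \int_{\mathbb{S}^2} u\,\ln\!\frac{f^*}{f_{Q^*}}\,dp.
\end{equation*}
Vanishing of this for all admissible $u$, combined with the pseudo-Haar property invoked in Proposition~\ref{propFormOfJ}, forces $\ln(f^*/f_{Q^*})$ to be affine in $p \otimes p$ almost everywhere on its admissible domain; hence $f^*$ has the exponential form of Proposition~\ref{propFormOfJ}, and uniqueness of the dual maximiser together with $f^* \in A(Q^*)$ gives $f^* = f_{Q^*}$. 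With this in hand, taking curves with $u_0 = u = 0$ gives $f_t = f_{Q_t}$ and $\mathcal{F}(f_t) = J(Q_t)$, so the vanishing derivative hypothesis reads $\nabla J(Q^*) \cdot \dot Q_0 = 0$ for every $\dot Q_0 \in \sm3$, hence $\nabla J(Q^*) = 0$. The main technical hurdle will be justifying the interchange of derivative and integral in the reverse direction, since both $E_{Q_t}$ and the integrand depend on $t$ and $f_{Q_t}$ is non-smooth across $\partial E_{Q_t}$; the compact-support clause of (A1) is precisely what makes dominated convergence applicable, keeping all the relevant quantities bounded on a neighbourhood of $\supp(u_0 + tu)$.
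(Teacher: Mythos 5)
Your proof is correct and rests on the same decomposition that the paper uses, namely $\mathcal{F}(f_t) = J(Q_t) + \bigl(\mathcal{F}(f_t)-\mathcal{F}(f_{Q_t})\bigr)$, which you have recognised as $J(Q_t)$ plus the Kullback--Leibler divergence $D(f_t\,\|\,f_{Q_t})$; this identification is exact precisely because $u_0,u\in V$ force $f_t\in A(Q_t)$, so the linear and quadratic moments of $f_t$ and $f_{Q_t}$ agree and the remainder after substituting $\ln f_{Q_t}=\Lambda_t p\cdot p+\ln(Q_tp\cdot p-\eta)-\ln Z_t$ is exactly the relative entropy. Your reverse direction is the paper's argument, merely split into two cleaner steps: constant-$Q$ curves plus the pseudo-Haar/Hahn--Banach duality to pin down the exponential form of $f^*$, hence $f^*=f_{Q^*}$ by uniqueness of the auxiliary minimiser; then curves with $u=u_0=0$ to read off $\nabla J(Q^*)\cdot\dot Q_0=0$ for all $\dot Q_0$. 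Where you genuinely depart is the forward direction: the paper expands $\tfrac{d}{dt}\mathcal{F}(f_t)$ term by term and checks that each contribution is annihilated at $t=0$ by $u_0=0$ and $\nabla J(Q^*)=0$, whereas you sidestep this entirely by observing that $t\mapsto D(f_t\,\|\,f_{Q_t})\geq 0$ has a zero at $t=0$, so its derivative there vanishes once differentiability is known. This is a real economy: it exchanges the derivative bookkeeping for the single task of justifying differentiability of the relative-entropy map, which you correctly attribute to the compact-support clause in (A1), \Cref{lemmaQToFC1}, and \Cref{corollaryfContinuous} keeping $f_{Q_t}$ and $f_t$ bounded away from zero on $\supp u$. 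One caveat, shared with the paper rather than introduced by you: the constant-$Q$ step only tests against $u$ for which $f^*+tu\geq 0$ persists, so the affine conclusion for $\ln(f^*/f_{Q^*})$ is a priori confined to where such $u$ may be supported; the step from there to $\supp f^*=E_{Q^*}$ is left implicit in both treatments.
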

\begin{proof}
First we split $\mf(f_t)$ into parts so that its differentiability is clearer to see.
\begin{equation}
\begin{split}
\mathcal{F}(f_t)=& J(Q_t)+\mf(f_t)-\mf(f_{Q_t})\\
=& J(Q_t)+\left(\int_{\s2}f_t(p)\ln f_t(p)-f_{Q_t}(p)\ln f_{Q_t}(p)\,dp\right)+\int_{\s2}u_t(p) \ln (Q_t p\cdot p-\eta)\,dp\\
=&  J(Q_t)+\left(\int_{\supp(u_t)}f_t(p)\ln f_t(p)-f_{Q_t}(p)\ln f_{Q_t}(p)\,dp\right)+\int_{\supp(u_t)}u_t(p)\ln (Q_t p\cdot p-\eta)\,dp
\end{split}
\end{equation}
Now the derivative of this expression can be taken, where the support condition on $u_t$ removes any issues about non-differentiability of the terms involving logarithms at zero, and the sufficient differentiability of $f_{Q_t}(p)$ on its domain comes from \Cref{lemmaQToFC1}. 
\begin{equation}
\begin{split}
&\frac{d}{dt}\mf(f_t)\\
=&\frac{\partial J}{\partial Q}(Q_t)\cdot \frac{dQ_t}{dt}+\int_{\supp(u_t)} \left(\ln (f_t)-\ln (f_{Q_t})\right)\frac{\partial f_t(p)}{\partial Q}\frac{dQ_t}{dt}\,dp + \int_{\supp(u_t)}\ln (f_t(p))u(p)\,dp\\
&-\int_{\supp(u_t)}\frac{u_t(p)}{Q_tp\cdot p-\eta}\left(p\otimes p-\frac{1}{3}I\right)\, -u(p)\ln (Q_tp\cdot p-\eta)\,dp\\
=& \left(\frac{\partial J}{\partial Q}(Q_t)+\int_{\supp(u_t)}\left(\ln(f_t(p))-\ln(f_{Q_t}(p))\right)\frac{\partial f_{Q_t}(p)}{\partial Q}-\frac{u_t(p)}{Q_tp\cdot p-\eta}\left(p\otimes p-\frac{1}{3}I\right)\,dp\right)\cdot \frac{dQ_t}{dt}\\
&+\int_{\supp(u_t)}\left(\ln f_t(p)-\ln (Q_tp\cdot p-\eta)\right)u(p)\,dp
\end{split}
\end{equation}
If the curve goes through $f_{Q^*}$, where $Q^*$ is a critical point of $J$, then at $t=0$ $u_0=0$, $\supp(u_0)=\emptyset$ and all terms vanish, so that $\left.\frac{d}{dt}\mathcal{F}(f_t)\right|_{t=0}=0$. 

Conversely, assume that this vanishes at $t=0$ for all such curves satisfying (A1). Considering $Q$ constant in $t$, this implies
\begin{equation}\int_{\supp(u_0)}\ln \frac{f^*(p)}{Q^*p\cdot p-\eta} u(p)=0\end{equation}
In particular, since $u \in V$, 
\begin{equation}\ln\left(\frac{f_0}{Q_0p\cdot p-\eta}\right) = \alpha + \lambda p\cdot p\end{equation}
by Hahn-Banach, so that $f^*(p)=\frac{1}{z}\exp(\lambda p\cdot p)(Q^*p\cdot p-\eta)$ on $\supp{f^*}=E_{Q^*}$. By uniqueness of this solution, this implies that $\lambda=\Lambda(Q^*)$ and $z=Z$. In particular, $f^*=f_{Q^*}$, and $u_0=0$. Substituting this back into the expression for when $Q$ is not constant in $t$ gives that 
\begin{equation}
\begin{split}
\left.\frac{\partial J}{\partial Q}(Q_0)\cdot \frac{dQ_t}{dt}\right|_{t=0}=0,
\end{split}
\end{equation}
which then implies that $\frac{\partial J}{\partial Q}(Q_0)=0$, so that $Q_0$ is a critical point of $J$. 

\end{proof}

In Onsager-type models we can avoid difficulties in differentiating logarithms at zero if we restrict ourselves only to probability distributions bounded away from zero. A heuristic interpretation of the previous result is that if we look at the restricted set of probability distributions $f \in \ps2$ with $\supp(f)\subset E_Q$ so that $f(p)=f_Q(p)$ for $p$ near $\partial E_Q$, then we can avoid analogous differentiability issues in this model. 

\section{Related models}
\label{secRelatedModels}
\subsection{The inclusion of attractive, thermally dependent, interactions}
\label{secThermal}
We now consider an adjustment of the free-energy density accounting for attractive interactions, while the model had previously only considered repulsive steric interactions. As in Maier-Saupe, we consider attractive interactions dependent on temperature and the Q-tensor of our orientation distribution function. We take the new free energy density as
\begin{equation}k_BT\rho_0 \int_{\mathbb{S}^2}f(p)\ln f(p)-f(p)\ln \left(1-\rho_0\left(c-\frac{3}{2}dQp\cdot p\right)\right)\,dp - \rho_0^2U(a+b|Q|^2),\end{equation}
where $a,b,U$ are material parameters related to the anisotropy of the polarisability of the molecules and $b,U>0$. Dividing through by $\rho_0kT$, which leave the minimisation unchanged, this can be non-dimensionalised again as 
\begin{equation}
\begin{split}
&\int_{\mathbb{S}^2}f(p)\ln f(p)-f(p) \ln \left(Qp\cdot p-\frac{\rho_0c-1}{\rho_0d}\right)-\frac{\rho_0}{k_BT}\left(bU\right)|Q|^2+\frac{\rho_0Ua}{kT}-\ln(\rho_0d)\\
=&\int_{\mathbb{S}^2}f(p)\ln f(p)-f(p) \ln \left(Qp\cdot p-\eta\right)-\frac{1}{2\tau}|Q|^2 +C_1
\end{split}
\end{equation}
where $C_1$ is irrelevant to the minimisation and $\tau=\frac{2k_BT}{\rho_0bU}>0$. Since this is a $Q$-dependent perturbation of the original minimisation problem, this can easily be treated using the methodology given previously.
\begin{proposition}
All global and $L^p$-local minimisers of $\mathcal{F}_\tau$ are in one-to-one correspondence with global and local minimisers of $J_\tau$, where $J_\tau :\text{dom}(J)\to\mathbb{R}$ by 
\begin{equation}J_\tau(Q)=J(Q)-\frac{1}{2\tau}|Q|^2.\end{equation}
\end{proposition}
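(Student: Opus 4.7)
The key observation is that the thermal correction $-\frac{1}{2\tau}|Q|^2$ depends on $f$ only through the linear functional $Q(f) = \int_{\s2}(p\otimes p - \frac{1}{3}I)f(p)\,dp$. In particular, for each fixed $Q \in \mathcal{Q}$ the term is constant on the admissible set $A(Q)$, so
\begin{equation}
\min_{f \in A(Q)}\mathcal{F}_\tau(f) = \left(\min_{f \in A(Q)}\mathcal{F}(f)\right)-\frac{1}{2\tau}|Q|^2 = J(Q) - \frac{1}{2\tau}|Q|^2 = J_\tau(Q),
\end{equation}
and the unique minimiser on $A(Q)$ is still $f_Q$ from \Cref{propFormOfJ}. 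This immediately yields the splitting $\min_{f \in \ps2}\mathcal{F}_\tau(f) = \min_{Q \in \mathcal{Q}} J_\tau(Q)$, proving the correspondence for global minimisers: the optimal $f$ is $f_{Q^*}$ where $Q^*$ minimises $J_\tau$.

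For the $L^p$-local minimiser claim, the plan is to apply the abstract framework of \Cref{definitionsForLocalMins}--\Cref{corollaryNoLavrentiev} to $\mathcal{F}_\tau$ in place of $\mathcal{F}$. I would verify the required hypotheses in turn: $\mathcal{F}_\tau$ remains bounded below and coercive, since it differs from $\mathcal{F}$ by the bounded continuous term $-\frac{1}{2\tau}|Q|^2$ on the bounded set $\mathcal{Q}$; $L^1$-weak lower semicontinuity follows from \Cref{lemmaLowerSemiContinuous} together with the fact that $Q$ depends continuously on $f$ in the weak topology, so the quadratic correction is weakly continuous; and strict convexity on each $A(Q)$ is preserved because the correction is constant on $A(Q)$. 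The right-inverse map $T^{-1}: Q \mapsto f_Q$ is unchanged, so its continuity with respect to the $W^{1,q}$, $C^\alpha$ and $L^q$ topologies (\Cref{corollaryfContinuous}) applies verbatim.

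With these hypotheses verified, \Cref{propFMinQ} and \Cref{propQMinF} applied to $\mathcal{F}_\tau$ and $J_\tau$ give exactly the one-to-one correspondence: if $f^*$ is an $L^p$-local minimiser of $\mathcal{F}_\tau$ then $Q^* = Tf^*$ is a local minimiser of $J_\tau$ and $f^* = f_{Q^*}$ by \Cref{propMinsInMinSet}; conversely, if $Q^*$ is a local minimiser of $J_\tau$ then $f_{Q^*}$ is an $L^p$-local minimiser of $\mathcal{F}_\tau$. There is no real obstacle here, since the quadratic term $-\frac{1}{2\tau}|Q|^2$ is smooth on all of $\sm3$ and its presence does not interact with the delicate domain/support issues that motivated the earlier machinery; the only thing worth noting explicitly is that the blow-up of $J$ at $\partial \text{dom}(J)$ in \Cref{propJBlowUp} dominates the bounded perturbation, so $J_\tau$ still blows up at the boundary of its domain, ensuring that minimising sequences are not lost there.
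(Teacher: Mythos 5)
Your proof is correct and takes essentially the same approach as the paper: the central observation that the thermal term depends on $f$ only through $Q$, hence is constant on each $A(Q)$, so the inner minimisation still yields $f_Q$ and the outer problem becomes minimisation of $J_\tau$; the paper simply cites \Cref{remarkGeneralMinimisation} and the identity $\min_{f \in A(Q)}\mathcal{F}_\tau(f) = J(Q) - \frac{1}{2\tau}|Q|^2$, while you have spelled out the hypothesis-checking for the abstract framework that the remark leaves implicit.
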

\begin{proof}
This follows from \Cref{remarkGeneralMinimisation}, noting that 
\begin{equation}\min\limits_{f \in A(Q)}\mathcal{F}_\tau(f)=\min\limits_{f \in A(Q)}\mf(f)-\frac{1}{2\tau}|Q|^2.\end{equation}
\end{proof}

\begin{proposition}\label{propBifurcation}
Let $\eta<0$. Then $Q=0$ is a critical point of $J_\tau$ for all $\tau>0$. Let $\tau_c=\frac{15}{2}\left(1+\frac{2}{15\eta}\right)^{-2}$. Then if $\tau<\tau_c$ $Q=0$ is a local minimum of $J_\tau$, and if $\tau>\tau_c$ then $Q$ is a local maximum of $J_\tau$.
\end{proposition}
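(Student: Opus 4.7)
The plan is to first verify that $Q=0$ is a critical point of $J_\tau$, and then compute the Hessian of $J_\tau$ at $0$ by passing to the auxiliary problem. The Hessian computation reduces to a constrained quadratic minimisation that can be solved explicitly via second-order spherical harmonics, from which the stability threshold $\tau_c$ is read off.

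For the critical point, since $\eta<0$ the origin lies in both $\{|Q|^2>\eta\}$ and $\{v_{\min}(Q)>\eta\}$, so $J$ is $C^\infty$ near $Q=0$ by the earlier regularity result. Frame indifference forces the optimal distribution at $0$ to be the isotropic state $f_U=\tfrac{1}{4\pi}$ and $\Lambda(0)=0$. Since $f_U$ has zero Q-tensor, substitution into the formula for $\partial J/\partial Q$ from \Cref{propJDerivatives} gives $\partial J/\partial Q(0)=0$, and the contribution from $-\tfrac{|Q|^2}{2\tau}$ also vanishes at $Q=0$, so $\partial J_\tau/\partial Q(0)=0$ for every $\tau>0$.

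For the Hessian, fix $A\in\sm3$ and expand $J(tA)$. Writing the optimal distribution as $f_{tA}=f_U+t\tilde\phi_A+O(t^2)$, the leading coefficient $\tilde\phi_A$ minimises
\[
\delta^2\mathcal{F}(f_U)[\phi,\phi]=4\pi\int_{\s2}\Bigl(\phi(p)+\tfrac{Ap\cdot p}{4\pi\eta}\Bigr)^2\,dp
\]
from \Cref{propIsotropicGeneral} subject to $\int\phi\,dp=0$ and $\int\phi(p)(p\otimes p-\tfrac{1}{3}I)\,dp=A$. Since the first variation of $\mathcal{F}$ at $f_U$ vanishes, Taylor expansion yields $\left.\tfrac{d^2}{dt^2}J(tA)\right|_{t=0}=\delta^2\mathcal{F}(f_U)[\tilde\phi_A,\tilde\phi_A]$. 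By Lagrange multipliers the minimiser lies in the span of the constraint functions, i.e.\ $\tilde\phi_A(p)=\alpha^*+B^*p\cdot p$ with $B^*\in\sm3$; the resulting linear system for $\alpha^*$ and $B^*$ is resolved using the key identity
\[
\int_{\s2}(Bp\cdot p)\Bigl(p\otimes p-\tfrac{1}{3}I\Bigr)\,dp=\tfrac{8\pi}{15}B,
\]
which follows from the standard formula $\int p_ip_jp_kp_l\,dp=\tfrac{4\pi}{15}(\delta_{ij}\delta_{kl}+\delta_{ik}\delta_{jl}+\delta_{il}\delta_{jk})$ together with the tracelessness of elements of $\sm3$.

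Substituting back gives the Hessian of $J$ at $0$ as $C(\eta)|A|^2$ for an explicit scalar $C(\eta)$, and hence the Hessian of $J_\tau$ at $0$ is $\bigl(C(\eta)-\tfrac{1}{\tau}\bigr)\mathrm{Id}$ as a bilinear form on $\sm3$. This changes sign precisely at $\tau=1/C(\eta)=\tau_c$, yielding the local min/max dichotomy by the standard second-order criterion. The main subtlety is careful book-keeping through the constrained quadratic minimisation to recover the stated form $\tau_c=\tfrac{15}{2}\bigl(1+\tfrac{2}{15\eta}\bigr)^{-2}$; the consistency check $\tau_c\to\infty$ as $\eta\to-\tfrac{2}{15}$ matches the known degeneracy of $\delta^2\mathcal{F}(f_U)$ at that value of $\eta$ from \Cref{propIsotropicGeneral}, and the fact that $\tau_c\to 0$ as $\eta\nearrow 0$ matches the expected instability at high densities.
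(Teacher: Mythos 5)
Your proposal is correct, but it reaches the Hessian of $J$ at the origin by a genuinely different route than the paper. The paper (via \Cref{propSecondDerivativeJ}) works entirely on the dual side: it writes $J(Q)=F(Q,\Lambda(Q))$, computes the three second derivatives $\partial^2 F/\partial\lambda^2$, $\partial^2 F/\partial\lambda\partial Q$, $\partial^2 F/\partial Q^2$ at $(0,0)$, obtains $\partial\Lambda/\partial Q$ by differentiating the implicit relation $\partial F/\partial\lambda=0$, and assembles the chain-rule expression
\begin{equation*}
\frac{\partial^2 J}{\partial Q^2}(0)=\frac{\partial^2 F}{\partial Q^2}(0,0)+\frac{\partial^2 F}{\partial\lambda\partial Q}(0,0)\,\frac{\partial\Lambda}{\partial Q}(0)=\left(1+\tfrac{2}{15\eta}\right)\cdot\tfrac{15}{2}\cdot\left(1+\tfrac{2}{15\eta}\right)\,\mathrm{Id}.
\end{equation*}
You instead work on the primal side, invoking the envelope-theorem fact that at a point where $\delta\mathcal{F}(f_U)=0$, the Hessian of the value function $J(Q)=\min_{A(Q)}\mathcal{F}$ in a direction $A$ equals the constrained minimum of $\delta^2\mathcal{F}(f_U)$ over perturbations with prescribed Q-tensor $A$. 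You then solve that quadratic minimisation by Lagrange multipliers, which forces $\tilde\phi_A(p)=\tfrac{15}{8\pi}Ap\cdot p$, and substituting back gives
\begin{equation*}
\delta^2\mathcal{F}(f_U)[\tilde\phi_A,\tilde\phi_A]=4\pi\left(\tfrac{15}{8\pi}+\tfrac{1}{4\pi\eta}\right)^{2}\int_{\s2}(Ap\cdot p)^2\,dp=\tfrac{15}{2}\left(1+\tfrac{2}{15\eta}\right)^{2}|A|^2,
\end{equation*}
matching the paper. Both routes use the same fourth-moment identity (your $\int(Bp\cdot p)(p\otimes p-\tfrac{1}{3}I)\,dp=\tfrac{8\pi}{15}B$ is \Cref{propFourthOrder} rescaled). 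What your approach buys is conceptual transparency — it identifies $\partial^2 J/\partial Q^2(0)$ with a constrained infimum of the already-computed second variation from \Cref{propIsotropicGeneral}, so no implicit-function-theorem bookkeeping is needed; what it costs is that you must justify the envelope-type identity (why the leading-order perturbation of $f_{tA}$ must minimise $\delta^2\mathcal{F}(f_U)$ subject to the constraints, which follows from strict convexity of $\mathcal{F}$ on $A(tA)$ and the comparison curve $g_t=f_U+t\phi$), whereas the paper's dual argument only uses routine differentiation of $F$.
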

\begin{proof}
The first derivative of $J_\tau$ is easily given by 
\begin{equation}\frac{\partial J_\tau}{\partial Q}(0)=\frac{\partial J}{\partial Q}(0)-\frac{1}{\tau}0,\end{equation}
at which point the critical point condition is satisfied, since $Q=0$ is always a local minimum of $J$. The second derivative at $0$, using \Cref{propSecondDerivativeJ}, is then given by 
\begin{equation}
\begin{split}
\frac{\partial^2J_\tau}{\partial Q^2}(0)=\left(\frac{15}{2}\left(1+\frac{2}{15\eta}\right)^2-\frac{1}{\tau}\right)\text{Id}.
\end{split}
\end{equation}
Therefore the second derivative is positive if $\tau <\frac{15}{2}\left(1+\frac{2}{15\eta}\right)^{-2}$, and is negative if $\tau>\frac{15}{2}\left(1+\frac{2}{15\eta}\right)^{-2}$.
\end{proof}

All of the results in this subsection can be summarised in the following theorem. 

\begin{theorem}\label{theoremMinsEquivThermal}
If $f^* \in \ps2$ is an $L^p$-local minimiser for any $p \in [1,\infty]$, then $f^*$ is an $L^q$-local minimiser for all $q \in [1,\infty]$. All $L^p$-local minimisers $f^*$ satisfy the Euler-Lagrange equation 
\begin{equation}
\begin{split}
f^*(p)=&\frac{1}{Z}\exp(\lambda^* p\cdot p)\max(Q^*p\cdot p-\eta,0),\\
Q^*=& \int_{\mathbb{S}^2}\left(p\otimes p-\frac{1}{3}I\right)f^*(p)\,dp,\\
\lambda^* =& \int_{\mathbb{S}^2}\frac{f^*(p)}{Q^*p\cdot p-\eta}\left(p\otimes p-\frac{1}{3}I\right)\,dp +\frac{1}{\tau}Q^*.
\end{split}
\end{equation}
$Q^*,\lambda^* $ are also saddle points of $F:\text{dom}(J)\times\sm3 \to\mathbb{R}$ given by 
\begin{equation}F(Q,\lambda)=Q\cdot \lambda -\frac{1}{2\tau}|Q|^2-\ln\left(\int_{\mathbb{S}^2}\exp(\lambda p\cdot p)\max(Qp\cdot p-\eta,0)\,dp\right).\end{equation}
Finally, the isotropic state $f_u=\frac{1}{4\pi}$ is an $L^p$-local minimiser if $\tau<\tau_c=\frac{15}{2}\left(1+\frac{2}{15\eta}\right)^{-2}$, and is not a local minimiser if $\tau>\tau_c$.
\end{theorem}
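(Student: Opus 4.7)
The plan is to assemble the theorem from results already established in this subsection and in Sections \ref{secAuxiliaryProblem}--\ref{secLocalMinimisers}, exploiting the key observation that the thermal correction $-\frac{1}{2\tau}|Q|^2$ depends only on the Q-tensor and therefore interacts trivially with the auxiliary splitting $\min_{f\in\ps2}\mathcal{F}_\tau(f)=\min_{Q\in\mathcal{Q}}J_\tau(Q)$. In particular, on each fibre $A(Q)$ the functional $\mathcal{F}_\tau$ differs from $\mathcal{F}$ only by the additive constant $-\frac{1}{2\tau}|Q|^2$, so the inner minimiser is unchanged and given by \Cref{propFormOfJ}.

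First I would deduce the equivalence of $L^p$-local minimisation across $p\in[1,\infty]$. By the first proposition of \Cref{secThermal}, $L^p$-local minimisers of $\mathcal{F}_\tau$ correspond bijectively to local minimisers of the finite-dimensional function $J_\tau$ on $\text{dom}(J)$. Since the map $Q\mapsto f_Q$ is continuous in $L^\infty$ by \Cref{corollaryfContinuous} (and this continuity is unaffected by a $Q$-dependent perturbation), the abstract framework of \Cref{propMinsInMinSet,propFMinQ,propQMinF,corollaryNoLavrentiev} applies verbatim to $\mathcal{F}_\tau$ and yields the claimed topological independence.

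Next, for the Euler-Lagrange equation I would adapt \Cref{theoremLocalMinsEQuiv}: the correspondence forces any $L^p$-local minimiser $f^*$ with Q-tensor $Q^*$ to satisfy $\frac{\partial J_\tau}{\partial Q}(Q^*)=0$. Using \Cref{propJDerivatives} for $\frac{\partial J}{\partial Q}$ together with $\frac{\partial}{\partial Q}\left(-\frac{1}{2\tau}|Q|^2\right)=-\frac{1}{\tau}Q$, this rearranges to
\begin{equation}
\Lambda(Q^*)=\int_{\mathbb{S}^2}\frac{f^*(p)}{Q^*p\cdot p-\eta}\left(p\otimes p-\frac{1}{3}I\right)\,dp+\frac{1}{\tau}Q^*,
\end{equation}
and setting $\lambda^*=\Lambda(Q^*)$ gives the stated formula. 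The explicit form $f^*(p)=\frac{1}{Z}\exp(\lambda^*p\cdot p)\max(Q^*p\cdot p-\eta,0)$ comes from \Cref{propFormOfJ} applied to the unchanged inner problem.

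For the saddle-point representation, \Cref{propFormOfJ} yields $J_\tau(Q)=\max_{\lambda\in\sm3}F(Q,\lambda)$ for the $F$ in the statement, and the pair $(Q^*,\lambda^*)$ is a saddle point because the critical point condition $\frac{\partial J_\tau}{\partial Q}(Q^*)=0$ handles the outer minimisation while the inner maximisation in $\lambda$ is solved by $\lambda^*=\Lambda(Q^*)$ via the dual optimality identity from \Cref{propFormOfJ}. Finally, the isotropic stability statement is exactly \Cref{propBifurcation} transported through the local-minimiser correspondence. No single step poses a real obstacle; the entire proof is routine bookkeeping once one checks that the $Q$-only perturbation preserves the strict convexity of $\mathcal{F}_\tau$ on each $A(Q)$ and the smoothness and boundary blow-up of $J_\tau$ needed to invoke the earlier machinery.
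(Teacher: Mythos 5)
Your proposal is correct and takes essentially the approach the paper intends: the paper states this theorem as a summary of the preceding results in the subsection without giving an explicit proof, and your argument is precisely that compilation, using the correspondence between $L^p$-local minimisers of $\mathcal{F}_\tau$ and local minimisers of $J_\tau$, then \Cref{propJDerivatives} plus the chain rule for the Euler--Lagrange equation, \Cref{propFormOfJ} for the form of $f^*$ and the saddle-point representation, and \Cref{propBifurcation} for the isotropic stability threshold.
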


It is also possible to produce a statement similar to \Cref{propLimitingF} for the zero-temperature limit. 

\begin{proposition}
Let $\eta<\frac{2}{3}$, $\tau_j \nearrow 0$, and let $f_j \in \ps2$ be corresponding minimisers of $\mf_{\tau_j}$. Then there exists rotations $R_j \in \text{SO}(3)$ such that $[R_jf_j] \wsl \frac{1}{2}\left(\delta_{e_1}+\delta_{e_1}\right)$ in $C(\mathbb{S}^2)^*$.
\end{proposition}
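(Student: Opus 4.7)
The plan is to reduce this statement to Proposition \ref{propLimitingF}, via a two-step process. First I would show that the Q-tensors $Q_j$ of the minimisers $f_j$ must satisfy $|Q_j|^2 \to \frac{2}{3}$. Then I would apply Lemma \ref{lemmaLimitingQ} to extract suitable rotations $R_j$, and finally pass to the weak-$*$ limit using the same symmetry and support-concentration argument as in the proof of Proposition \ref{propLimitingF}.

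The key step is showing $|Q_j|^2 \to \frac{2}{3}$, which I would approach by a comparison argument. Fix any $\delta>0$ and pick $Q^* \in \mathcal{Q}$ with $\frac{2}{3}-\delta < |Q^*|^2 < \frac{2}{3}$ and $|Q^*|^2>\eta$, so that $Q^* \in \text{dom}(J)$ and $J(Q^*)$ is a finite constant (independent of $j$). By optimality of $Q_j$ as a minimiser of $J_{\tau_j}(Q)=J(Q)-\frac{1}{2\tau_j}|Q|^2$, we have
\begin{equation}
J(Q_j)-\frac{|Q_j|^2}{2\tau_j} \leq J(Q^*)-\frac{|Q^*|^2}{2\tau_j},
\end{equation}
which rearranges as $|Q^*|^2 - |Q_j|^2 \leq 2\tau_j\bigl(J(Q^*)-J(Q_j)\bigr)$. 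Now I would use Proposition \ref{propJBlowUp}, which gives $J(Q_j)\geq \psi_s(Q_j) - \ln(|Q_j|^2-\eta) \geq -\ln(4\pi)-\ln(\tfrac{2}{3}-\eta)$, a uniform lower bound. Hence $|Q^*|^2 - |Q_j|^2 \leq 2\tau_j(J(Q^*)+C) \to 0$ as $\tau_j \searrow 0$, giving $\liminf_j |Q_j|^2 \geq \frac{2}{3}-\delta$; since $\delta$ is arbitrary and $|Q_j|^2 \leq \frac{2}{3}$, the convergence $|Q_j|^2\to\frac{2}{3}$ follows.

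Once this is established, Lemma \ref{lemmaLimitingQ} provides rotations $R_j$ so that $R_jQ_jR_j^T \to e_1\otimes e_1-\frac{1}{3}I$. Writing $\tilde{f}_j=[R_jf_j]$, its Q-tensor is $R_jQ_jR_j^T$ and in particular $\int_{\s2}(p\cdot e_1)^2 \tilde{f}_j(p)\,dp \to 1$, so $\int_{\s2}\bigl(1-(p\cdot e_1)^2\bigr)\tilde{f}_j(p)\,dp\to 0$. For any open neighbourhood $U_\epsilon$ of $\{\pm e_1\}$ of the form $\{(p\cdot e_1)^2>1-\epsilon\}$, this yields $\int_{\s2\setminus U_\epsilon}\tilde{f}_j\,dp \to 0$, giving tightness onto $\{\pm e_1\}$.

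Finally I would conclude using the even symmetry of the minimisers, which follows from Theorem \ref{theoremMinsEquivThermal} since $f^*(p)=\frac{1}{Z}\exp(\lambda^*p\cdot p)\max(Q^*p\cdot p-\eta,0)$ is invariant under $p\mapsto -p$. Given any $g \in C(\s2)$, splitting the integral as in Proposition \ref{propLimitingF},
\begin{equation}
\int_{\s2}\tilde{f}_j(p)g(p)\,dp = \int_{U_\epsilon}\tilde{f}_j(p)\tfrac{g(p)+g(-p)}{2}\,dp + o(1),
\end{equation}
and a squeezing argument with $\bigcap_{\epsilon>0}U_\epsilon=\{\pm e_1\}$ identifies the limit as $\frac{g(e_1)+g(-e_1)}{2}$, which is precisely integration against $\frac{1}{2}(\delta_{e_1}+\delta_{-e_1})$. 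The main obstacle is the first step; the rest is a direct adaptation of the argument already developed for Proposition \ref{propLimitingF}.
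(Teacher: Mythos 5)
Your proof is correct and follows the same overall strategy as the paper (show $|Q_j|^2 \to \tfrac{2}{3}$ by a comparison argument, then transfer to the weak-$*$ limit), but you handle two points more carefully than the paper does, and both are worth noting.

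First, for the comparison step, the paper bounds $J(Q_j)$ below by $J(0)$, which is $+\infty$ whenever $\eta \geq 0$ and, even for $\eta<0$, need not be a lower bound for $J$ (the paper itself shows in Proposition~\ref{propIsotropicNotGlobal} that $0$ is not a global minimiser of $J$ for $\eta$ slightly below zero). Your substitution of the uniform lower bound $J(Q_j)\geq -\ln(4\pi)-\ln\!\bigl(\tfrac{2}{3}-\eta\bigr)$ drawn from Proposition~\ref{propJBlowUp} is the right fix and covers all $\eta<\tfrac{2}{3}$. Second, the paper says ``following the proof of Proposition~\ref{propLimitingF},'' but that proof obtains concentration onto $\{\pm e_1\}$ from the support constraint $\supp([R_jf_j]) \subset \{p : R_jQ_jR_j^T p\cdot p > \eta_j\}$, which shrinks to $\{\pm e_1\}$ only because there $\eta_j\to\tfrac{2}{3}$. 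Here $\eta$ is fixed, so the supports converge to the fixed cap $\{(p\cdot e_1)^2 > \tfrac{1}{3}+\eta\}$, which does not degenerate. Your tightness step via the second moment, $\int_{\s2}\bigl(1-(p\cdot e_1)^2\bigr)[R_jf_j]\,dp\to 0$, is exactly what is needed in place of the support argument, and it is a genuine improvement rather than mere bookkeeping. The remaining pieces (even symmetry from the Euler--Lagrange form in Theorem~\ref{theoremMinsEquivThermal}, the squeeze over $U_\epsilon\searrow\{\pm e_1\}$) match the paper's argument.
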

\begin{proof}
Following the proof of \Cref{propLimitingF}, it suffices to show that the corresponding Q-tensors $Q_j$ satisfy $|Q_j|^2 \to \frac{2}{3}$. For the sake of contradiction assume otherwise, so that there exists some $0<R<\frac{2}{3}$ and a subsequence (not relabelled) so that $|Q_j|^2\leq R$ for all $j$. Let $Q=\frac{3(R+\epsilon)}{2}\left(e\otimes e-\frac{1}{3}I\right)$ with $\epsilon>0$ sufficiently small so that $R+\epsilon<\frac{2}{3}$, and let $f=f_Q$. Then 
\begin{equation}
\begin{split}
\mf_{\tau_j}(f_j)-\mf_{\tau_j}(f)=&J_{\tau_j}(Q_j)-J_{\tau_j}(Q)\\
\geq & \left(J(0)-\frac{1}{\tau_j}R^2\right)-\left(J(Q)-\frac{1}{\tau_j}(R+\epsilon)^2\right)\\
=& \left(J(0)-J(Q)\right)+\frac{\epsilon R+\epsilon^2}{\tau_j}.
\end{split}
\end{equation}
Taking $j \to \infty$ gives $\tau_j \to 0$ and the right-hand side blows up to $+\infty$. Therefore for sufficiently large $j$, $\mf_{\tau_j}(f_j)>\mf_{\tau_j}(f)$, contradicting that $f_j$ was a global minimiser. Therefore if $f_j$ is the sequence of minimisers, $\frac{2}{3}\geq\limsup\limits_{j \to \infty}|Q_j|^2\geq\liminf\limits_{j \to \infty}|Q_j|^2> R$. Since $R<\frac{2}{3}$ was arbitrary, $\lim\limits_{j\to\infty}|Q_j|^2=\frac{2}{3}$. 
\end{proof}

One immediate difference compared to the Maier-Saupe model is that for fixed temperature, by increasing the concentration it is possible to undergo a local isotropic-nematic-isotropic phase transition. In the original units, $b,V,U,c,d$ will generally be fixed and taken to be independent of the temperature and concentration, so it is possible to produce a local stability phase portrait for the isotropic state, changing only temperature and concentration by fixing these values (see \Cref{figureIsotropicStability}). In particular, as $\rho_0\to \frac{1}{c}$, the limiting concentration for the isotropic state to have finite energy, the transition temperature approaches zero. This can be seen since
\begin{equation}
\begin{split}
\tau_c=&\frac{15}{2}\left(1+\frac{2}{15\eta}\right)^{-2}=\frac{15}{2}\left(\frac{15\eta}{15\eta+2}\right)^{2},
\end{split}
\end{equation}
so as $\rho_0 \to \frac{1}{c}$, $\eta \to 0$, so $\tau_c \to 0$, and $T_c=\tau_c\frac{ \rho_0bU}{2} \to 0$. This analysis only accounts for local stability, and the numerical analysis in \cite{zheng2016density} would suggest that in all the region where $\eta>-\frac{2}{15}$, the isotropic state is globally unstable in the absence of attractive interactions, which since $J_\tau(0)=J(0)$ and $J_\tau(Q)<J(Q)$ for $Q\neq 0$, this then suggests that the isotropic state is globally unstable in this concentration regime when temperature effects are considered also. In particular, the concentration regime $-\frac{2}{15}<\eta<0$ where the isotropic state regains local stability would be difficult to observe in reality.

\begin{figure}[H]
\centering
        \includegraphics[width=0.45\textwidth]{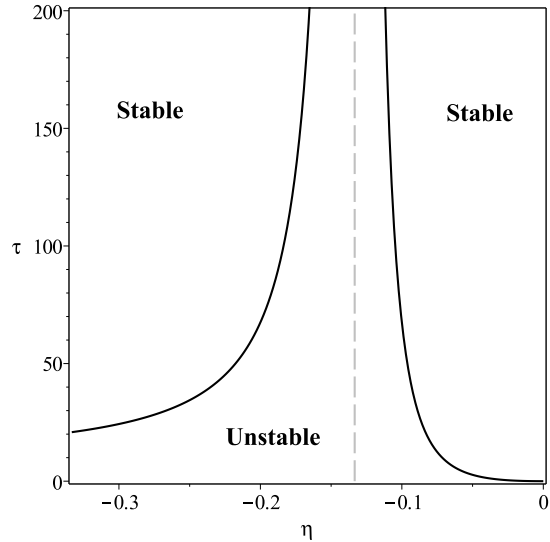}
    \caption{Local stability regions in terms of $\eta,\tau$ with the asymptote corresponding to $\eta=-\frac{2}{15}$}
    \label{figureIsotropicStability}
\end{figure}

\begin{remark}
Similarly to \Cref{remarkVdW}, we can obtain an equation of state with the inclusion of thermally attractive interactions. Returning to original units and using the reduction to $J$ again, we have
\begin{equation}
\begin{split}
\frac{1}{k_BT}P=&\left(-1+\rho\frac{\partial}{\partial \rho} \right)\left(\rho J\left(Q,\frac{2(\rho_0c-1)}{3\rho_0d}\right)-\frac{\rho^2U}{k_BT}(a+b|Q|^2)\right)\\
=&\rho\int_{\s2}\frac{f_Q(p)}{1-\rho (c-\frac{3}{2}dQp\cdot p)}\,dp-\frac{\rho^2U}{k_BT}(a+b|Q|^2).
\end{split}
\end{equation}
This can then be rearranged to be given in a more classical form, 

\begin{equation}
\left(P+\rho^2 U\left(a+b|Q|^2\right)\right)\left(\rho\int_{\s2}\frac{f_Q(p)}{1-\rho(c-dQp\cdot p)}\,dp\right)^{-1}=k_BT.
\end{equation}
In this case the average excluded volume term is given in terms of the harmonic mean rather than the arithmetic mean, although in the sphere limit where $d=0$, the two means are equal. 
\end{remark}

\subsection{Uniaxial systems}
\label{subsecUniaxial}
In order to reduce the complexity of the problem it is often assumed that nematic systems have axial symmetry about a fixed unit vector, removing a degree of freedom. Studies in Onsager models  suggest that in certain situations at least it can be energetically favourable for nematic systems to form such uniaxial systems \cite{fatkullin2005critical,vollmer2015critical}. As such it is frequently an assumption made in the modelling process, for example being a key assumption of the Oseen-Frank theory \cite{frank1958liquid}. In particular, this gives corresponding Q-tensors of the form $Q=S\left(e_1\otimes e_1-\frac{1}{3}I\right)$ with $S\in \left(-\frac{1}{2},1\right)$. We say that $Q$ is oblate if $S<0$ and prolate if $S>0$. The numerical studies in \cite{zheng2016density} invoke a uniaxial ansatz on solutions. It is possible to extend the previous results of this work to the case where systems are constrained to be uniaxial. Even if global/local minimisers of the full biaxial model are not uniaxial, the following results show the existence of prolate and oblate uniaxial critical points of the full biaxial problem in certain concentration regimes, with the precise statements given in \Cref{corollaryUniaxial1,corollaryUniaxial2}. First, we define the set of uniaxial probability distributions,
\begin{equation}\mathcal{P}_U(\s2)=\{f \in \ps2: \exists \tilde{f}:[-1,1]\to\mathbb{R},e\in \s2 \text{ such that } f(p)=\tilde{f}(p\cdot e)\},\end{equation}
and take the normalisation on the second Legendre polynomial 
\begin{equation}P_2(x)=\frac{1}{2}(3x^2-1).\end{equation}
With fixed eigenbasis, and $S \in \left(-\frac{1}{2},1\right)$ we take $Q_S=S\left(e_1\otimes e_1-\frac{1}{3}I\right)$.

\begin{proposition}\label{propMinsEquivUniaxial}
If $f\in \pus2$ is an $L^p$-local minimiser of $\mathcal{F}|_{\pus2}$, then there exists $\hat{f} \in \mathcal{P}([-1,1])$ and $n \in \mathbb{S}^2$ so that $\frac{1}{2\pi}\hat{f}(p\cdot n)=f(p)$ and 
\begin{equation}
\begin{split}
\hat{f}(x)=& \frac{1}{Z}\exp(lP_2(x))\max(SP_2(x)-\eta,0),\\
S=&\int_{-1}^1 P_2(x)\hat{f}(x)\,dx,\\
l=& \frac{2}{3}\int_{-1}^1 P_2(x)\frac{\hat{f}(x)}{SP_2(x)-\eta}\,dx.
\end{split}
\end{equation}
\end{proposition}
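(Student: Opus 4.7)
The strategy is to reduce the uniaxial problem to a one-dimensional variational problem on $\mathcal{P}([-1,1])$ that mirrors exactly the reduction carried out in Sections \ref{secAuxiliaryProblem}--\ref{secLocalMinimisers}, with the scalar order parameter $S$ playing the role of the Q-tensor. By frame indifference of $\mathcal{F}$, it suffices to work with uniaxial distributions of the form $f(p)=\frac{1}{2\pi}\hat{f}(p\cdot e_1)$, since rotating any local minimiser to have axis $e_1$ preserves the energy and the $L^p$-neighbourhood structure, and perturbations within $\pus2$ split into (i) changes of $\hat{f}$ at fixed axis and (ii) rigid rotations that leave $\mathcal{F}$ invariant.

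With $Q=S\bigl(e_1\otimes e_1-\tfrac{1}{3}I\bigr)$ and $S=\int_{-1}^1 P_2(x)\hat{f}(x)\,dx$, the energy $\mathcal{F}(f)$ reduces to a functional $\hat{\mathcal{F}}$ on $\mathcal{P}([-1,1])$, depending on $\hat{f}$ only through $\hat{f}$ itself and the scalar $S(\hat{f})$. Following Section \ref{secAuxiliaryProblem} I would split
\begin{equation}
\min_{\hat{f}\in \mathcal{P}([-1,1])}\hat{\mathcal{F}}(\hat{f})=\min_{S}\Bigl(\min_{\hat{f}:S(\hat{f})=S}\hat{\mathcal{F}}(\hat{f})\Bigr)=\min_{S}J^U(S).
\end{equation}
The inner problem is a strictly convex entropy minimisation on the set $\{x:SP_2(x)>\eta\}$ subject to a single linear moment constraint, so \cite{borwein1991duality} applies verbatim as in \Cref{propFormOfJ} and gives a unique minimiser of the form $\hat{f}_S(x)=\frac{1}{Z}\exp(lP_2(x))\max(SP_2(x)-\eta,0)$, with $l$ determined by the dual optimality condition (this yields the stated formula for $l$ after differentiating $J^U$ in $S$ and using the envelope argument of \Cref{propJDerivatives}). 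The pseudo-Haar condition needed for Borwein--Lewis reduces here to the linear independence of $\{1,P_2\}$ on any subset of $[-1,1]$ of positive measure, which is immediate.

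To upgrade from the Euler--Lagrange equation for $J^U$ to one satisfied by $L^p$-local minimisers of $\hat{\mathcal{F}}$, I invoke the abstract framework of \Cref{remarkGeneralMinimisation}: once the map $S\mapsto \hat{f}_S$ is shown continuous in $L^q$ for $q\in[1,\infty)$ by the regularisation technique of \Cref{corollaryfContinuous} (replacing the $Q$-dependent set $E_Q$ with the $S$-dependent set $\{x:SP_2(x)>\eta\}$), Propositions \ref{propMinsInMinSet}, \ref{propFMinQ} and \ref{propQMinF} transfer directly and show that any $L^p$-local minimiser of $\hat{\mathcal{F}}$ must be of the form $\hat{f}_{S^*}$ with $S^*$ a critical point of $J^U$. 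The EL identities in the statement then follow by evaluating $\frac{dJ^U}{dS}(S^*)=0$.

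The main obstacle is verifying the one-dimensional analogues of the continuity/regularity machinery of \Cref{propApproxC1,propJC1,corollaryfContinuous}: conceptually these go through unchanged, but care is needed because the set $\{x:SP_2(x)>\eta\}$ can be a single interval, a union of two intervals, or all of $[-1,1]$ depending on the sign of $S$ and the size of $\eta$, so the $\varphi_\epsilon$-approximation argument and the uniform control of $|\{x:|SP_2(x)-\eta|<t\}|$ as $t\to 0^+$ must be formulated so as to cover all of these cases uniformly on compact subsets of the reduced domain. Once this technical verification is in place, the proof is essentially a transcription of the Q-tensor arguments in the scalar setting.
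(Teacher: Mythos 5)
Your proposal is essentially correct, but it takes a somewhat different and more laborious route than the paper. You propose to re-derive the entire machinery from scratch in one dimension: set up a scalar auxiliary problem on $\mathcal{P}([-1,1])$, apply Borwein--Lewis afresh, re-prove the $\varphi_\epsilon$-regularisation estimates and the analogue of \Cref{lemmaUniformlySmallMeasure}, re-establish the continuity of $S\mapsto\hat{f}_S$ by the argument of \Cref{corollaryfContinuous}, and only then invoke \Cref{remarkGeneralMinimisation}. The paper avoids all of this duplication by working with the already-constructed biaxial macroscopic function $J$: it simply restricts $J$ to the one-parameter ray $S\mapsto Q_S=S\bigl(n\otimes n-\tfrac{1}{3}I\bigr)$, computes $\frac{d}{dS}J(Q_S)=\frac{\partial J}{\partial Q}(Q_S)\cdot\bigl(n\otimes n-\tfrac{1}{3}I\bigr)$ using the already-proved $C^1$-regularity from \Cref{propJDerivatives}, and then uses \Cref{propQLambdaEigenvectors} --- the fact that $\Lambda(Q_S)$ shares the eigenbasis of $Q_S$, hence $\Lambda(Q_S)=l\bigl(n\otimes n-\tfrac{1}{3}I\bigr)$ --- to rewrite the biaxial optimality conditions in terms of the scalars $(S,l)$. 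The step you omit, and which is the real crux of the paper's proof, is precisely this eigenbasis-sharing property: it guarantees without further computation that the biaxial auxiliary minimiser $f_{Q_S}$ is itself uniaxial, so the 1D constrained problem and the biaxial auxiliary problem have the same solution, and the restricted $J$ inherits regularity from the unrestricted one. Your 1D re-derivation would arrive at the same formulas, but it does not by itself exhibit this compatibility with the full biaxial problem, which the paper needs immediately afterwards to deduce (\Cref{corollaryUniaxial1,corollaryUniaxial2}) that uniaxial critical points of the constrained problem are also critical points of the unconstrained one. In short: your plan is sound and would deliver the stated Euler--Lagrange system, but the paper's route is shorter and structurally richer, buying the link between the uniaxial and biaxial problems essentially for free from \Cref{propQLambdaEigenvectors}.
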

\begin{proof}
If $f$ is uniaxial about $n \in \mathbb{S}^2$, then its corresponding Q-tensor can be written as $Q=S\left(n\otimes n-\frac{1}{3}I\right)$, with $S=\int_{-1}^1P_2(x)\hat{f}(x)\,dx$. Therefore returning to the macroscopic function $J$, any local minimum of the uniaxially constrained Q-tensor problem can be described by its order parameter $S$, so by fixing the axis of symmetry,
\begin{equation}
\begin{split}
0=&\frac{d}{dS}J\left(S\left(n\otimes n-\frac{1}{3}I\right)\right)\\
=& \frac{\partial J}{\partial Q}\left(S\left(n\otimes n-\frac{1}{3}I\right)\right)\cdot \frac{\partial}{\partial S} S\left(n\otimes n-\frac{1}{3}I\right)\\
=& \left(\Lambda(Q_S) -\frac{1}{Z}\int_{\mathbb{S}^2}\frac{\exp(\Lambda(Q_S) p\cdot p)}{SP_2(p\cdot n)-\eta}\left(p\otimes p-\frac{1}{3}I\right)\,dp\right)\cdot \left(n\otimes n-\frac{1}{3}I\right).
\end{split}
\end{equation}
Now recall from \Cref{propQLambdaEigenvectors} that since $\Lambda(Q_S)$ shares an eigenbasis with $Q_S$, there exists a scalar $l$ so that $\Lambda(Q_S)=l\left(n\otimes n-\frac{1}{3}I\right)$. Therefore this can be written as 
\begin{equation}
\begin{split}
0=& \frac{2l}{3}-\frac{1}{Z}\int_{\mathbb{S}^2}\frac{\exp(lP_2(p\cdot n))}{SP_2(p\cdot n)-\eta}P_2(p\cdot n)\,dp\\
=&\frac{2l}{3}-\int_{-1}^1\frac{\hat{f}(x)}{SP_2(x)-\eta}P_2(x)\,dx.
\end{split}
\end{equation}
The dual optimality condition is similarly rephrased in terms of $\hat{f}$ as 
\begin{equation}S=\frac{1}{Z}\int_{\mathbb{S}^2}\exp(\Lambda(Q_S) p\cdot p)\max(SP_2(p\cdot n)-\eta,0)f(p)\,dp=\int_{-1}^1\exp(lP_2(x))\max(SP_2(x)-\eta,0)\hat{f}(x)\,dx.\end{equation}
The equivalence of local minimisation comes from the argument in \Cref{remarkGeneralMinimisation}.
\end{proof}

\begin{proposition}
Assume that $\hat{f}$ satisfies the uniaxial Euler-Lagrange equation. Then given $n \in \s2$, $f \in \ps2$ defined by $f(p)=\frac{1}{2\pi}\hat{f}(p\cdot n)$ satisfies the full biaxial Euler-Lagrange equation.
\end{proposition}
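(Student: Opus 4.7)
The strategy is to read the uniaxial Euler--Lagrange system as a restriction of the biaxial one along the one-parameter family $Q = S(n\otimes n - \tfrac{1}{3}I)$, and then reverse the restriction by exploiting the rotational symmetry of $f$ about $n$. Concretely, I would first set $f(p)=\tfrac{1}{2\pi}\hat f(p\cdot n)$ and verify that $f\in\ps2$: nonnegativity is inherited from $\hat f$, and the normalisation $\int_{\s2}f(p)\,dp = \int_{-1}^{1}\hat f(x)\,dx$ follows from integrating out the azimuthal variable (i.e.\ the change of variables $x=p\cdot n$ on $\s2$ produces the factor $2\pi$ that cancels the one in the definition of $f$).

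Next I would identify the $Q$-tensor and $\Lambda$. Since $f$ is invariant under every rotation fixing $n$, every traceless symmetric tensor obtained by integrating $(p\otimes p - \tfrac{1}{3}I)$ against a scalar function of $p\cdot n$ weighted by $f$ must commute with those rotations and hence be a scalar multiple of $n\otimes n - \tfrac{1}{3}I$. Applying this observation to $Q = \int_{\s2}f(p)(p\otimes p - \tfrac{1}{3}I)\,dp$ and evaluating at $n\otimes n$ (using $P_2(x)=\tfrac{3}{2}(x^2-\tfrac{1}{3})$ to pass between $(p\cdot n)^2-\tfrac{1}{3}$ and $P_2(p\cdot n)$) reduces the computation to a 1D integral which, by the $S$-equation of the uniaxial system, gives $Q = S(n\otimes n - \tfrac{1}{3}I)$. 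The same symmetry argument forces the candidate $\Lambda = l(n\otimes n - \tfrac{1}{3}I)$, with $l$ matching the uniaxial $l$ by the same correspondence.

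With $Q$ and $\Lambda$ so identified, both $\Lambda p\cdot p$ and $Qp\cdot p - \eta$ are functions of $p\cdot n$ alone. Hence the candidate biaxial distribution $p\mapsto\tfrac{1}{Z}\exp(\Lambda p\cdot p)\max(Qp\cdot p-\eta,0)$ is uniaxial about $n$ and, after recognising the $P_2$ dependence in the exponent and support, coincides pointwise with $\tfrac{1}{2\pi}\hat f(p\cdot n) = f(p)$ up to the normalising constant, which is forced by $\int_{\s2} f = 1$. This verifies the first and second lines of the biaxial Euler--Lagrange equation.

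Finally, I would check the third line in exactly the same manner: the integral $\int_{\s2}\tfrac{f(p)}{Qp\cdot p-\eta}(p\otimes p-\tfrac{1}{3}I)\,dp$ is rotationally invariant about $n$ and traceless, hence a scalar multiple of $n\otimes n-\tfrac{1}{3}I$; pairing with $n\otimes n$ collapses it to a 1D integral that reproduces the uniaxial $l$-equation, giving precisely $\Lambda$. The main obstacle, and the only point requiring genuine care, is the bookkeeping of scalar factors arising from the identity $(p\cdot n)^2-\tfrac{1}{3}=\tfrac{2}{3}P_2(p\cdot n)$ and from the azimuthal integration, which is what generates the $\tfrac{2}{3}$ prefactor appearing in the uniaxial $l$-equation; once these are handled consistently, each line of the biaxial system follows directly from the corresponding line of the uniaxial system.
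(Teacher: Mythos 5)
Your proposal is correct, but it takes a genuinely different route from the paper's. You verify the three lines of the biaxial Euler--Lagrange system directly at the microscopic level: you compute $Q$, posit $\Lambda = l(n\otimes n - \tfrac{1}{3}I)$, and check each equation by exploiting the rotational invariance of $f$ about $n$ (which forces every relevant traceless symmetric moment to lie in the one-dimensional subspace spanned by $n\otimes n - \tfrac{1}{3}I$), reducing the tensorial equations to their scalar uniaxial counterparts via azimuthal integration. The paper instead works entirely at the macroscopic level: it notes that the first two lines of the biaxial EL hold automatically because $f$ is, by construction, the optimal distribution $f_Q$ with $\Lambda=\Lambda(Q)$ the dual maximiser, so the only thing to check is the third line, which is precisely $\tfrac{\partial J}{\partial Q}(Q) = 0$. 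The paper then invokes \Cref{propJDerivatives} for the formula for $\tfrac{\partial J}{\partial Q}$ and \Cref{propQLambdaEigenvectors} to conclude that $\tfrac{\partial J}{\partial Q}(Q)$ is uniaxial with axis $n$; combined with the uniaxial critical condition $\tfrac{\partial J}{\partial Q}(Q)\cdot(n\otimes n - \tfrac{1}{3}I) = 0$, this forces $\tfrac{\partial J}{\partial Q}(Q) = 0$. Both arguments hinge on the same one-dimensionality observation, but you re-derive the symmetry facts from scratch where the paper cites the machinery it has already built; yours is more self-contained and elementary, while the paper's is shorter given the prior results. You are right to flag the $\tfrac{2}{3}$ bookkeeping from $(p\cdot n)^2 - \tfrac{1}{3} = \tfrac{2}{3}P_2(p\cdot n)$ as the delicate point --- the factors must be tracked carefully to make the scalar reductions line up with the normalisations used in \Cref{propMinsEquivUniaxial}, but the overall structure of your argument is sound.
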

\begin{proof}
The dual optimality condition automatically holds by \Cref{propQLambdaEigenvectors}, so it only remains to show that the derivative of $J$ is zero at the corresponding Q-tensor. By writing the derivative condition in the uniaxial case as 
\begin{equation}\frac{\partial J}{\partial Q}(Q)\cdot \left(n \otimes n-\frac{1}{3}I\right)=0,\end{equation}
it suffices to show that $\frac{\partial J}{\partial Q}(Q)$ is parallel to $\left(n\otimes n-\frac{1}{3}I\right)$, or equivalently that $\frac{\partial J}{\partial Q}$ is uniaxial with distinguished direction $n$. Recalling that the derivative is given by 
\begin{equation}\frac{\partial J}{\partial Q}(Q)=\Lambda(Q) -\frac{1}{Z}\int_{E_Q}\exp(\Lambda(Q) p \cdot p)\left(p\otimes p-\frac{1}{3}I\right)\,dp,\end{equation}
and that if $Q$ is uniaxial then so is $\Lambda(Q)$ with the same distinguished direction, the result immediately follows. 
\end{proof}

\begin{remark}
This tells us that if $Q$ is uniaxial and a critical point of the constrained problem, then $Q$ is a critical point of the unconstrained problem. However, this does not say that uniaxially constrained (local) minimisers are necessarily unconstrained (local) minimisers. Nor does it imply that all critical points are uniaxial. We can however deduce the existence of certain uniaxial critical points of the biaxial system.
\end{remark}

\begin{corollary}
\label{corollaryUniaxial1}
For $0\leq \eta < \frac{2}{3}$, there exists a prolate uniaxial critical point of $J$. Furthermore, there exists rotations $R_j\in\text{SO}(3)$ so that as $\eta_j \to \frac{2}{3}$ from below, $[R_jf_j] \wsl \frac{1}{2}\left(\delta_{e_1}+\delta_{-e_1}\right)$ in $C(\s2)^*$.
\end{corollary}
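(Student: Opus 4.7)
The plan is to apply the direct method to the restriction of $J$ to the one-parameter family of prolate uniaxial Q-tensors. Fix the symmetry axis $e_1$ and set $Q_S = S(e_1 \otimes e_1 - \frac{1}{3}I)$ with $S\in\mathbb{R}$. The two conditions defining $\text{dom}(J)$ from \Cref{propAdmissibleQ}, namely $v_{\min}(Q_S) = -\frac{S}{3} > -\frac{1}{3}$ and $|Q_S|^2 = \frac{2}{3}S^2 > \eta$, reduce to the single admissibility interval $S \in I_\eta := \bigl(\sqrt{3\eta/2},\,1\bigr)$, which is nonempty precisely when $\eta<\frac{2}{3}$. On $I_\eta$ the function $j(S):=J(Q_S)$ is continuous by \Cref{propJDerivatives}.

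Next I would invoke \Cref{propJBlowUp}, which states that $J$ blows up uniformly at the boundary of its domain: consequently $j(S)\to+\infty$ both as $S\to\sqrt{3\eta/2}^+$ (where $|Q_S|^2\to\eta$) and as $S\to 1^-$ (where $v_{\min}(Q_S)\to-\frac{1}{3}$). Hence $j$ attains a minimum at some interior $S^*\in I_\eta$, and by construction $S^*>0$, so $Q^*=Q_{S^*}$ is a prolate uniaxial minimiser of $J$ over the uniaxial class. \Cref{propMinsEquivUniaxial} applied to the associated optimal-energy distribution $f_{Q^*}$, together with the proposition immediately following it stating that uniaxial solutions of the uniaxially constrained Euler--Lagrange equation are critical points of the full biaxial problem, then promotes $Q^*$ to a critical point of $J$ on all of $\text{dom}(J)$, settling the first assertion.

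For the asymptotic statement, let $\eta_j\nearrow\frac{2}{3}$ and let $S_j^*\in I_{\eta_j}$ be the prolate critical points just constructed, with associated optimal densities $f_j = f_{Q_{S_j^*}}$. The squeezing $\sqrt{3\eta_j/2}<S_j^*<1$ forces $S_j^*\to 1$, whence $|Q_{S_j^*}|^2=\frac{2}{3}(S_j^*)^2\to\frac{2}{3}$. From the explicit form
\begin{equation*}
f_j(p)=\frac{1}{Z_j}\exp(\Lambda(Q_{S_j^*})p\cdot p)\max(Q_{S_j^*}p\cdot p-\eta_j,\,0)
\end{equation*}
we read off the even symmetry $f_j(-p)=f_j(p)$. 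Since $|Q_{S_j^*}|^2\to\frac{2}{3}$ and even symmetry are the only two ingredients used in the proof of \Cref{propLimitingF}, that argument transfers verbatim: one extracts via \Cref{lemmaLimitingQ} rotations $R_j$ such that $R_j Q_{S_j^*} R_j^T \to e_1\otimes e_1 - \frac{1}{3}I$, confines $\supp([R_jf_j])$ to shrinking symmetric neighbourhoods of $\{\pm e_1\}$, and applies a squeezing argument to conclude $[R_jf_j]\wsl\tfrac{1}{2}(\delta_{e_1}+\delta_{-e_1})$ in $C(\mathbb{S}^2)^*$. The only real thing to check is that both endpoints of $I_\eta$ genuinely lie in $\partial\text{dom}(J)$ so that the uniform blow-up of $J$ can be invoked; this is immediate from the explicit formulas $|Q_S|^2=\frac{2}{3}S^2$ and $v_{\min}(Q_S)=-S/3$, so no genuine obstacle arises.
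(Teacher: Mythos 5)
Your proposal is correct and follows essentially the same route as the paper: restrict $J$ to the prolate uniaxial segment, use the uniform blow-up at both endpoints of the admissibility interval $(\sqrt{3\eta/2},1)$ to obtain an interior minimiser of the restricted function, promote it to a full biaxial critical point via the uniaxial Euler--Lagrange equivalence, and then observe that the squeeze $\sqrt{3\eta_j/2}<S_j^*<1$ forces $|Q_j|^2\to\frac{2}{3}$ so that the argument of \Cref{propLimitingF} applies directly. The only difference is that you spell out the details that the paper leaves implicit (identifying both boundary points with $\partial\,\text{dom}(J)$ and flagging the squeeze on $S_j^*$); these are the right checks and they close the argument.
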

\begin{proof}
Consider the map $S\mapsto J\left(S\left(e_1 \otimes e_1-\frac{1}{3}I\right)\right)$ for $S \in \left(\sqrt{\frac{3\eta}{2}},1\right)$. This is a continuous map on an open set, which blows up at the boundary by \Cref{propJBlowUp}. Therefore it admits a local minimum, and the derivative must vanish there. Therefore the corresponding prolate uniaxial Q-tensor is a critical point of $J$. The convergence result follows by the same argument as \Cref{propLimitingF}
\end{proof}

\begin{corollary}
\label{corollaryUniaxial2}
If $0\leq \eta < \frac{1}{6}$. There exists an oblate uniaxial critical point of $J$. Furthermore, there exists rotations $R_j\in\text{SO}(3)$ so that as $\eta_j \to \frac{1}{6}$ from below, $[R_jf_j] \wsl \frac{1}{2\pi}\mathcal{H}^1\mesr\{p \in \s2 :e\cdot p=0\}$ in $C(\s2)^*$.
\end{corollary}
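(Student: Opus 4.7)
The plan is to follow the strategy of \Cref{corollaryUniaxial1}, replacing the prolate admissible interval by its oblate analogue. For $Q_S=S\big(e_1\otimes e_1-\frac{1}{3}I\big)$ with $S<0$, the eigenvalue constraint $v_{\min}(Q_S)>-\frac{1}{3}$ becomes $S>-\frac{1}{2}$, while \Cref{propAdmissibleQ} gives $|Q_S|^2=\frac{2S^2}{3}>\eta$, i.e.\ $S<-\sqrt{3\eta/2}$. Thus the admissible oblate interval is $I_\eta=\big(-\frac{1}{2},-\sqrt{3\eta/2}\big)$, which is nonempty precisely when $\eta<\frac{1}{6}$.

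For such $\eta$, the function $S\mapsto J(Q_S)$ is continuous on the open interval $I_\eta$ and, by \Cref{propJBlowUp}, blows up at both endpoints (the left endpoint corresponds to $v_{\min}\to-\frac{1}{3}$, the right to $|Q_S|^2\to\eta$). Hence it attains an interior minimum, and vanishing of the derivative there produces a uniaxial critical point of the restricted problem. By the proposition immediately preceding this corollary, any uniaxial critical point of the restricted problem is automatically a critical point of the unconstrained functional $J$, yielding the claimed oblate uniaxial critical point.

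For the convergence statement, as $\eta_j\nearrow\frac{1}{6}$ both endpoints of $I_{\eta_j}$ tend to $-\frac{1}{2}$, so any associated order parameter $S_j\in I_{\eta_j}$ must satisfy $S_j\to-\frac{1}{2}$. Choose $R_j\in\text{SO}(3)$ sending $e$ to the axis of uniaxial symmetry of $f_j$, so that $[R_jf_j]$ is uniaxial about $e$ and even. Its Q-tensor then converges to $Q_*=-\frac{1}{2}\big(e\otimes e-\frac{1}{3}I\big)$, and since $Q_*p\cdot p-\frac{1}{6}=-\frac{1}{2}(e\cdot p)^2$ vanishes only on the great circle $C=\{p\in\s2:e\cdot p=0\}$, uniform convergence of $R_jQ_jR_j^Tp\cdot p-\eta_j$ places $\supp([R_jf_j])\subset U_\epsilon=\{p\in\s2:|e\cdot p|<\epsilon\}$ for every $\epsilon>0$ and all $j$ sufficiently large.

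The main work, and the principal obstacle, is extracting the uniform measure on $C$ in the weak-$*$ limit: the even symmetry exploited in \Cref{propLimitingF} is no longer enough, since the support collapses to a one-dimensional set rather than a pair of points. Instead I would use the full uniaxial invariance. For $g\in C(\s2)$, averaging $g$ over the rotations $R_\phi^e$ about $e$ yields $\int[R_jf_j]g\,dp=\int[R_jf_j]\bar{g}\,dp$, where $\bar{g}(p)=\frac{1}{2\pi}\int_0^{2\pi}g(R_\phi^ep)\,d\phi$ is continuous and depends only on $e\cdot p$. Since $\supp([R_jf_j])\subset U_\epsilon$ and $[R_jf_j]$ is a probability density, the integral is trapped between $\inf_{U_\epsilon}\bar{g}$ and $\sup_{U_\epsilon}\bar{g}$, both of which converge to the constant value $\bar{g}|_C=\frac{1}{2\pi}\int_Cg\,d\mathcal{H}^1$ as $\epsilon\to 0$ by continuity of $\bar{g}$ and $\bigcap_{\epsilon>0}U_\epsilon=C$. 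This common limit is precisely the pairing of $g$ with $\frac{1}{2\pi}\mathcal{H}^1\mesr C$, completing the argument.
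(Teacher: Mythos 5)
Your proposal is correct and follows essentially the same approach as the paper's proof: identify the admissible oblate interval $\big(-\tfrac{1}{2},-\sqrt{3\eta/2}\big)$, use the boundary blow-up of $J$ from \Cref{propJBlowUp} to get an interior critical point, and then for the weak-$*$ limit replace $g$ by its azimuthal average about $e$ (which the paper writes in spherical coordinates as $g_s/2\pi$) before squeezing via $\supp([R_jf_j])\subset U_\epsilon$. The extra remark explaining why the even-symmetry trick from \Cref{propLimitingF} would not suffice here is a helpful observation but the underlying argument is the same.
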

\begin{proof}
The proof is similar to the previous result, by taking the map $S\mapsto J\left(S\left(e \otimes e-\frac{1}{3}I\right)\right)$ restricted to $S \in \left(-\frac{1}{2},-\sqrt{\frac{3\eta}{2}}\right)$. By the same argument this produces an oblate uniaxial critical point of $J$, and the corresponding probability distribution satisfies the Euler-Lagrange equation. 

Now let $\eta_j \to \frac{1}{6}$ from below, let $S_j$ be the corresponding local minimiser and $f_j$ be the probability distribution corresponding to the local minimiser $S_j\left(e_j\otimes e_j-\frac{1}{3}I\right)$ for some $e_j\in\s2$. The proof follows a similar argument to \Cref{propLimitingF}. We first let $R_j$ be any rotation such that $R_j^Te_j=e$. If $\hat{f}_j=[R_jf_j]$, then $\hat{f}_j$ has Q-tensor $S_j\left(e\otimes e-\frac{1}{3}I\right) \to -\frac{1}{2}\left(e\otimes e-\frac{1}{3}I\right)$. We have that $|Q_j|^2 \to \frac{1}{6}$, and in particular $S_j^2 \to \frac{1}{6}$. This implies that given $\epsilon>0$, for sufficiently large $j$, $\text{supp}(\hat{f}_j)\subset \{ p\in\s2 : |e\cdot p|<\epsilon\}=U_\epsilon$. Let $g \in C(\s2)$, and let $\tilde{g}:[0,\pi]\times[0,2\pi]$ denote its representation in spherical coordinates, with $\theta=0$ corresponding to $e$. Similarly, let $\tilde{f}_j:[0,\pi]\to\mathbb{R}$ be the representation of $\hat{f}_j$ in spherical coordinates, which by axial symmetry is independent of $\phi$. We can then write 
\begin{equation}
\begin{split}
\int_{\s2}\hat{f}_j(p)g(p)\,dp=& \int_0^\pi \int_0^{2\pi}\sin(\theta)\tilde{f}_j(\theta)\tilde{g}(\theta,\phi)\,d\phi\,d\theta\\
=& \int_0^\pi \sin(\theta)\tilde{f}_j(\theta)\left(\int_0^{2\pi}\tilde{g}(\theta,\phi)\,d\phi\right)\,d\theta\\
=& \int_{\s2}\hat{f}_j(p)\frac{g_s(p)}{2\pi}\,dp,
\end{split}
\end{equation}
where $g_s(p)$ is the axially symmetric function defined in spherical coorindates by \begin{equation}\tilde{g_s}(\theta)=\int_0^{2\pi} \tilde{g}(\theta,\phi)\,d\phi.\end{equation}
Since $g$ is continuous, this implies that $g_s$ is continuous also. Furthermore, $\text{supp}(f_j)\subset U_\epsilon$ for all large $j$, and for each $\epsilon>0$, $U_\epsilon$ is a neighbourhood of $U_0=\{p\cdot e=0\}$, and $\bigcap\limits_{\epsilon>0}U_\epsilon=U_0$. Therefore for large $j$,
\begin{equation}
\begin{split}
\int_{\s2}\hat{f}_j(p)g(p)\,dp=& \int_{\s2}f_j(p)\frac{g_s(p)}{2\pi}\,dp\\
=& \int_{U_\epsilon} \hat{f}_j (p) \frac{g_s(p)}{2\pi}\,dp\\
\in & \left[ \min\limits_{p \in U_\epsilon} \frac{g_s(p)}{2\pi},\max\limits_{p \in U_\epsilon} \frac{g_s(p)}{2\pi}\right].
\end{split}
\end{equation}
As in \Cref{propLimitingF}, a squeeze argument then gives that 
\begin{equation}\lim\limits_{j \to\infty} \int_{\s2}\hat{f}_j(p)g(p)\,dp  \in \left[ \min\limits_{p \in U_0} \frac{g_s(p)}{2\pi},\max\limits_{p \in U_0} \frac{g_s(p)}{2\pi}\right]=\left\{\frac{1}{2\pi}g_s(e_2)\right\},\end{equation}
where $e_2$ can be any vector orthogonal to $e$, due to the axial symmetry of $g_s$. Substituting this back into the definition of $g_s$ then gives that 
\begin{equation}\lim\limits_{j \to +\infty} \int_{\s2}[R_jf_j](p)g(p)\,dp=\frac{1}{2\pi}\tilde{g_s}(0)=\int_{0}^{2\pi} \tilde{g}(0,\phi)\,d\phi=\frac{1}{2\pi}\int_{\{p \cdot e_1=0\}}g(p)\,d\mathcal{H}^1(p).\end{equation}
\end{proof}

\begin{corollary}\label{corollaryUniaxial3}
There exists $\eta_0<0$ so that for all $\eta_0<\eta<0$, there exists two uniaxial critical points of $J$, one oblate and one prolate, denoted $Q^+_\epsilon$ and $Q^-_\epsilon$ respectively, such that $Q^\pm_\eta \to 0$ as $\eta \nearrow 0$. Furthermore, these are local maxima of the uniaxially constrained problem.
\end{corollary}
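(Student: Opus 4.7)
The strategy is to restrict $J$ to the uniaxial ray and exhibit two local maxima that bifurcate off $Q = 0$ as $\eta \nearrow 0$. Fix $e \in \mathbb{S}^2$, set $Q_S = S(e \otimes e - \tfrac{1}{3}I)$ for $S \in (-1/2, 1)$, and consider the one-dimensional function $g(S, \eta) := J(Q_S, \eta)$.

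The argument relies on three asymptotic statements as $\eta \nearrow 0$. First, since the unique minimiser of $\mathcal{F}$ on $A(0)$ is the isotropic state $f_U = \tfrac{1}{4\pi}$, a direct computation gives $g(0, \eta) = \mathcal{F}(f_U, \eta) = -\ln(-4\pi\eta) \to +\infty$. Second, for any fixed $S_0 \in (-1/2, 1) \setminus \{0\}$, the point $(Q_{S_0}, 0)$ lies in $\text{dom}(J)$ since $|Q_{S_0}|^2 = \tfrac{2}{3} S_0^2 > 0$, so the continuity of $J$ from \Cref{propJDerivatives} gives $g(S_0, \eta) \to J(Q_{S_0}, 0) < +\infty$. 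Third, by \Cref{propositionIsotropicStateGlobalMin} combined with \Cref{theoremNoLavrentiev} and \Cref{propFMinQ}, for $\eta \in (-1/3, 0) \setminus \{-\tfrac{2}{15}\}$ the point $Q = 0$ is a strict local minimum of $J$, so $g(\cdot, \eta)$ has a strict local minimum at $S = 0$.

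Choose $\eta_0 \in (-\tfrac{2}{15}, 0)$ close enough to $0$ that the above statements apply for $\eta \in (\eta_0, 0)$, and define $\delta^+_\eta := \inf\{S > 0 : g(S, \eta) \leq g(0, \eta)\}$. This is strictly positive by the third asymptotic. Given any $\epsilon > 0$, the first and second asymptotics force $g(\epsilon, \eta) < g(0, \eta)$ for all $\eta$ sufficiently close to $0$, hence $\delta^+_\eta \leq \epsilon$, which forces $\delta^+_\eta \to 0$. By continuity, $g(\delta^+_\eta, \eta) = g(0, \eta)$ and $g > g(0, \eta)$ on $(0, \delta^+_\eta)$, so the maximum of $g$ on $[0, \delta^+_\eta]$ is attained at some interior point $S^+_\eta \in (0, \delta^+_\eta)$. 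This $S^+_\eta$ is a critical point of $g$ and a local maximum; the proposition immediately following \Cref{propMinsEquivUniaxial} then promotes $Q^+_\eta := S^+_\eta(e \otimes e - \tfrac{1}{3}I)$ to a critical point of the unconstrained $J$. An identical argument on the interval $[-\delta^-_\eta, 0]$ yields the oblate local maximum $S^-_\eta$ with $\delta^-_\eta \to 0$, so $Q^\pm_\eta \to 0$ as $\eta \nearrow 0$.

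The only delicate bookkeeping is to exclude the exceptional value $\eta = -\tfrac{2}{15}$, where the isotropic state fails to be a strict local minimum; this is handled by requiring $\eta_0 > -\tfrac{2}{15}$. Beyond this, the proof is a short continuity-and-compactness argument exploiting the contrast between the logarithmic blow-up of $J$ at $(0, 0)$ and the continuity of $J$ at points $(Q, 0)$ with $Q \neq 0$.
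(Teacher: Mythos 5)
Your proof is correct and follows essentially the same strategy as the paper: restrict $J$ to the uniaxial ray, exploit the blow-up of $J(0,\eta)$ as $\eta\nearrow 0$ against the boundedness of $J(Q_{S_0},\eta)$ for fixed $S_0\neq 0$, use strict local minimality of the isotropic state (hence $\eta>-\tfrac{2}{15}$) to locate an interior local maximum on the interval up to the first crossing $\delta^\pm_\eta\to 0$, and promote the uniaxial critical point to a biaxial one via the proposition following \Cref{propMinsEquivUniaxial}. Your version is slightly more explicit in computing $g(0,\eta)=-\ln(-4\pi\eta)$ directly rather than citing the general blow-up result, but the argument is otherwise identical.
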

\begin{proof}
Only the existence of the prolate branch will be shown, with the oblate branch proof being identical. We first show that there exists a branch of local maximisers as stated, from which the result follows. Let $\eta_j \nearrow 0$ with $\eta_j>-\frac{2}{15}$ for all $j \in\mathbb{N}$. In particular, the isotropic state is a strict local minimiser for all $\eta_j$. Define $S_j=\inf\{ S \in (0,1): J(S,\eta_j)<J(0,\eta_j)\}$. The infima may be over an empty set, but for $j$ sufficiently large we can show this set is non-empty and that as $j \to +\infty$, $S_j\to 0$. Let $\epsilon >0$ be small. Since $J$ is continuous, $J\left(\epsilon\frac{1}{2},\eta_j\right)-J(0,\eta_j)\to -\infty$. In particular, for large $j$, $J\left(\epsilon,\eta_j\right)<J(0,\eta_j)$, proving that the set is non-empty. Furthermore, since $\epsilon$ was arbitrary, $S_j \to 0$. Since the isotropic state is a strict local minimiser, and $J(S_j,\eta_j)=J(0,\eta_j)$, this implies that for large $j$, there exists a local maximum $S^+_{\eta_j}$ so that $0<S^+_{\eta_j}<S_j \to 0$. Since $S^+_{\eta_j}$ is a local maximum of a $C^1$ function it must be a critical point, and therefore there exists a corresponding critical point of the unconstrained biaxial problem. 
\end{proof}

\subsubsection{Summary and qualitative phase diagram}
\begin{figure}[h!]
\begin{minipage}{0.9\textwidth}
\begin{center}
\includegraphics[width=0.7\textwidth]{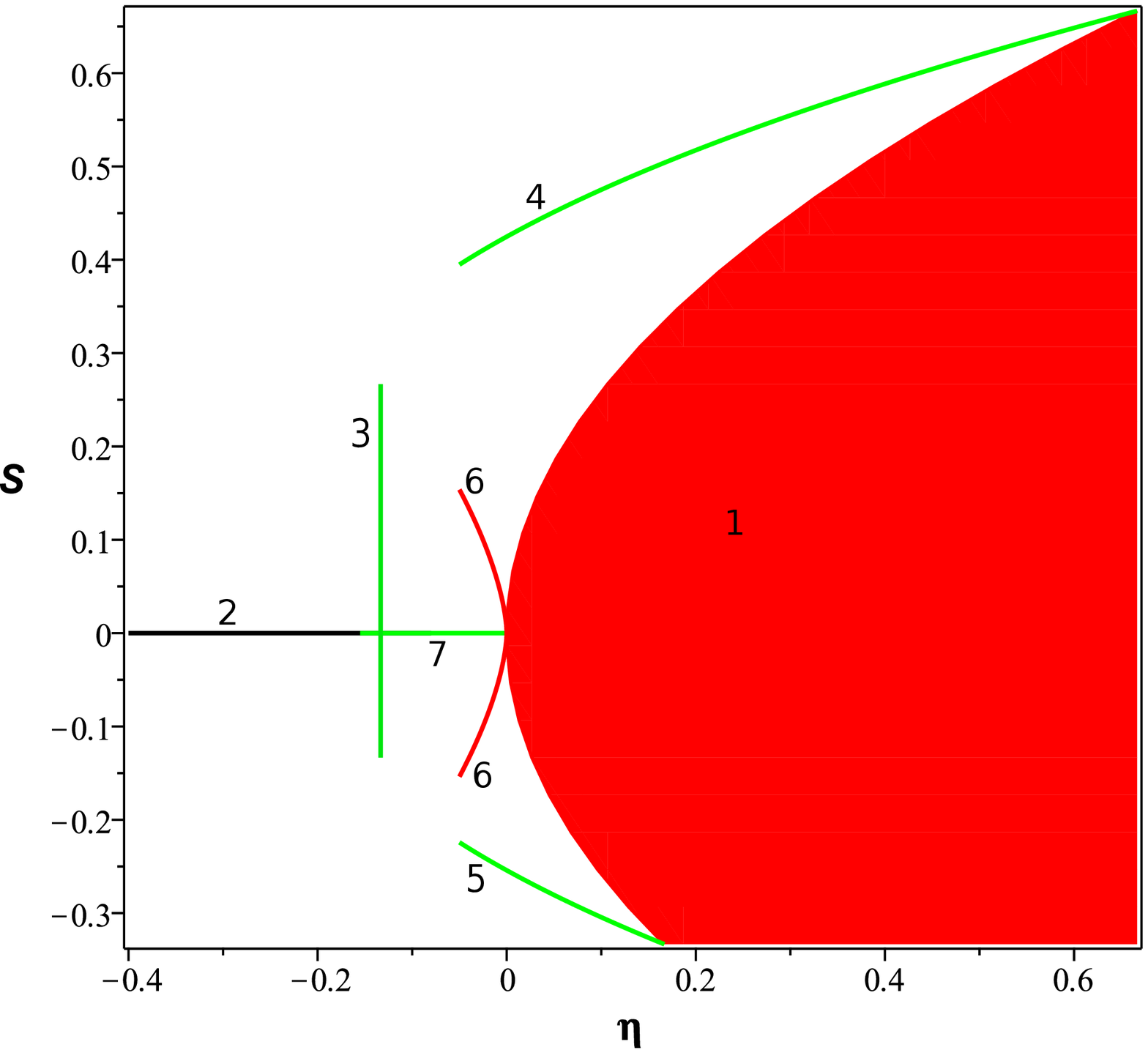}
\end{center}
\begin{multicols}{2}
\begin{enumerate}
\item Inadmissibility region from \Cref{propAdmissibleQ}.
\item Globally stable branch of the isotropic state from \Cref{propositionIsotropicStateGlobalMin}.
\item The non-strict local minimisers at $\eta=-\frac{2}{15}$ from \Cref{propEta215Drama}.
\item The prolate uniaxial branch from \Cref{corollaryUniaxial1}.
\item The oblate uniaxial branch from \Cref{corollaryUniaxial2}.
\item The uniaxial unstable branches that approach $0$ from \Cref{corollaryUniaxial3}.
\item The existence of some $\eta^c<0$ where the isotropic state ceases to be a global minimiser from \Cref{propIsotropicNotGlobal}. 
\end{enumerate}
\end{multicols}
\end{minipage}
\caption{The qualitative features of the phase diagram obtained analytically.}
\label{figQualitativePhaseDiagram}
\end{figure}

In the work of Zheng {\it et. al.} a phase diagram for the uniaxially constrained model was obtained numerically. We can compare this with a qualitative version of the phase diagram, obtained from the analytical results obtained in this work. In \Cref{figQualitativePhaseDiagram} we show the known branches of the phase diagram, labeled according to the results from which their existence was shown. This phase diagram also acts as a summary of the main results of the this work. Within the diagram green lines correspond to local minima, black lines to global minima, and red lines to unstable critical points. The large red region is the inadmissibility region, where $|Q|^2>\eta$. It is stressed to the reader that this is a caricature to show only the branches, and the quantitative values should be ignored. We can draw the conclusion that the numerical and analytical results are consistent, in as far as every branch of solutions in each phase diagram is found in the other. What remains to be understood analytically is the quantitative values of these branches, which branches correspond to global minima, and the bifurcations of the branches.

\section*{Acknowledgements}
The author would like to thank John M. Ball, Peter Palffy-Muhoray and Xiaoyu Zheng and Epifanio Virga for insightful and motivating discussions that have benefited this work. The research leading to these results has received funding from the European Research Council under the European Union's Seventh Framework Programme (FP7/2007-2013) / ERC grant agreement n$^{\circ}$ 291053.
\bibliographystyle{acm}
\bibliography{bibl}

\appendix

\section{The second derivative of $J$ at $0$}
\begin{proposition}\label{propFourthOrder}
Let $M$ be the fourth-order tensor defined by
\begin{equation}M=\frac{1}{4\pi}\int_{\s2}p\otimes p\otimes p\otimes p\,dp.\end{equation}
Acting as a linear operator from $\sm3$ to itself, given by 
\begin{equation}M(A)_{ij}=\sum\limits_{\alpha,\beta=1}^3M_{\alpha\beta i j}A_{\alpha\beta},\end{equation}
can be simplified as $M(A)=\frac{2}{15}A$.
\end{proposition}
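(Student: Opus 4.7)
The plan is to exploit the rotational invariance of $M$ to reduce the computation of $M(A)$ to finding a single scalar constant. Since the integral defining $M$ is taken over the full sphere, $M$ is an isotropic fourth-order tensor, i.e.\ invariant under $M_{\alpha\beta ij} \mapsto R_{\alpha a}R_{\beta b}R_{ic}R_{jd}M_{abcd}$ for any $R \in \text{SO}(3)$. A standard classification of isotropic fourth-order tensors then gives that $M$ must be of the form
\begin{equation}
M_{\alpha\beta ij} = a \, \delta_{\alpha\beta}\delta_{ij} + b \, \delta_{\alpha i}\delta_{\beta j} + c \, \delta_{\alpha j}\delta_{\beta i}
\end{equation}
for some scalars $a,b,c$. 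Moreover, $M$ is manifestly symmetric under all permutations of its four indices (since $p_\alpha p_\beta p_i p_j$ is), which forces $a = b = c$.

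Next I would pin down this single constant by a suitable contraction. Contracting $\alpha=\beta$ and $i=j$ and summing yields on the one hand
\begin{equation}
\sum_{\alpha,i=1}^3 M_{\alpha\alpha ii} = \frac{1}{4\pi}\int_{\s2}|p|^4\,dp = 1,
\end{equation}
and on the other hand $a(9 + 3 + 3) = 15a$, so $a = \tfrac{1}{15}$.

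Finally I would apply this explicit form to $A \in \sm3$:
\begin{equation}
M(A)_{ij} = \frac{1}{15}\sum_{\alpha,\beta}\bigl(\delta_{\alpha\beta}\delta_{ij} + \delta_{\alpha i}\delta_{\beta j} + \delta_{\alpha j}\delta_{\beta i}\bigr)A_{\alpha\beta} = \frac{1}{15}\bigl(\mathrm{Tr}(A)\delta_{ij} + A_{ij} + A_{ji}\bigr) = \frac{2}{15}A_{ij},
\end{equation}
where the tracelessness of $A$ kills the first term and symmetry collapses the other two. There is no real obstacle here; the only thing to be careful about is justifying (or just citing) the classification of isotropic fourth-order tensors, which could alternatively be avoided by directly evaluating $\int_{\s2} p_\alpha p_\beta p_i p_j \, dp$ via parametrisation and reducing by symmetry to the three non-vanishing index patterns.
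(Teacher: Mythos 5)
Your proof is correct, but it takes a different route from the paper. The paper invokes the representation theorem for isotropic tensor-\emph{valued functions} of a symmetric matrix (citing Smith 1971) to write $M(A)=\sum_i g_i(A)A^i$, then exploits the linearity of $M$ via a rescaling limit $r\to 0$ to kill the quadratic term and reduce to $M(A)=g_1(0)A$, and finally computes $g_1(0)$ by contracting with $A=e\otimes e-\tfrac{1}{3}I$. You instead use the classical decomposition of isotropic fourth-order \emph{tensors} as $a\,\delta_{\alpha\beta}\delta_{ij}+b\,\delta_{\alpha i}\delta_{\beta j}+c\,\delta_{\alpha j}\delta_{\beta i}$, observe that the total index symmetry of $M$ forces $a=b=c$, and pin down the constant by a full trace contraction giving $15a=1$. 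Both arguments are valid; yours is more elementary and avoids the scaling-limit step entirely (linearity is built into the tensor decomposition rather than being imposed afterwards), at the cost of having to cite or verify the isotropic fourth-order tensor classification. The paper's version is perhaps more in keeping with the representation-theoretic toolkit already used elsewhere in the paper (e.g.\ the proof that $\Lambda$ shares an eigenframe with $Q$), but for a linear operator your direct decomposition is the cleaner route. The final contraction against a traceless symmetric $A$, giving $\tfrac{1}{15}(\mathrm{Tr}(A)\delta_{ij}+A_{ij}+A_{ji})=\tfrac{2}{15}A_{ij}$, is exactly right.
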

\begin{proof}
If $A$ is a symmetric matrix (not necessarily traceless) and $R \in \text{SO}(3)$, then $M(RAR^T)=RM(A)R^T$, which is seen by using a change of variables $q=Rp$ in the integrand. By \cite{smith1971isotropic}, this implies that there are continuous scalar functions $g_0,g_1,g_2$ so that $M(A)=\sum\limits_{i=0}^3g_i(A)A^i$. Viewed only as a map from $\sm3$ to itself, this simplifies as $M(A)=g_2(A)\left(A^2-\frac{|A|^2}{3}I\right)+g_1(A)$. Since $M$ is a linear map, we have that for any $r\in\mathbb{R}\setminus \{0\}$,
\begin{equation}
\begin{split}
M(A)=&\frac{1}{r}M(r A)\\
=&\frac{1}{r} \left(r^2 g_2(r A)\left(A^2-\frac{|A|^2}{3}I\right)+r g_1(r A)A\right)\\
=& r g_2(r A)\left(A^2-\frac{|A|^2}{3}I\right)+g_1(r A)A\\
\Rightarrow M(A)=&\lim\limits_{r \to 0} r g_2(r A)\left(A^2-\frac{|A|^2}{3}I\right)+g_1(r A)A\\
=& g_1(0)A,
\end{split}
\end{equation}
using that $g_1,g_2$ are continuous. We have therefore established that $M$ is simply a multiple of the identity. To establish this constant, let $A=\left(e\otimes e -\frac{1}{3}I\right)$ for some $ e \in \s2$. Then the multiple can be established as 
\begin{equation}
\begin{split}
g_1(0)=& \frac{M(A)\cdot A}{A\cdot A}\\
=& \frac{3}{2}\frac{1}{4\pi}\int_{\s2}\left((p\cdot e)^2-\frac{1}{3}\right)\,dp\\
=& \frac{3}{2}\frac{1}{2}\int_{-1}^1 \left(x^2-\frac{1}{3}\right)^2\,dx\\
=&\frac{3}{2}\frac{4}{45}=\frac{2}{15}.
\end{split}
\end{equation}
\end{proof}

\begin{proposition}\label{propSecondDerivativeJ}
The second derivative of $J$ at $Q=0$ is given by 
\begin{equation}\frac{\partial^2 J}{\partial Q^2}(0)=\frac{15}{2}\left(1+\frac{2}{15\eta}\right)^2\text{Id},\end{equation}
where $\text{Id}$ is the identity operator on $\sm3$.
\end{proposition}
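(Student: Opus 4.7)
The plan is to compute the second derivative by linearising the first-derivative formula from \Cref{propJDerivatives} at $Q=0$. Since $\eta<0$ (required for $0\in\text{dom}(J)$), we are in the nice regime where $E_0=\mathbb{S}^2$ and everything is smooth. First I note that $\Lambda(0)=0$: by frame-indifference, $\Lambda(RQR^T)=R\Lambda(Q)R^T$ from \Cref{propQLambdaEigenvectors}, and taking $Q=0$ and arbitrary $R\in\text{SO}(3)$ forces $\Lambda(0)=0$. Consequently $Z(0,0)=-4\pi\eta$ and, using $\frac{f_Q(p)}{Qp\cdot p-\eta}=\frac{1}{Z}\exp(\Lambda p\cdot p)$, the first derivative formula simplifies to
\begin{equation*}
\frac{\partial J}{\partial Q}(Q)=\Lambda(Q)-\frac{1}{Z(Q,\Lambda(Q))}\int_{\mathbb{S}^2}\exp(\Lambda(Q)p\cdot p)\left(p\otimes p-\frac{1}{3}I\right)dp=:\Lambda(Q)-h(Q),
\end{equation*}
and $h(0)=0$ because $\int_{\s2}(p\otimes p-\frac{1}{3}I)\,dp=0$. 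So $\frac{\partial^2 J}{\partial Q^2}(0)=d\Lambda(0)-dh(0)$.

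Next I would compute $d\Lambda(0)$ by implicitly differentiating the defining relation $Q=\frac{1}{Z}\int_{\s2}\exp(\Lambda p\cdot p)(Qp\cdot p-\eta)(p\otimes p-\frac{1}{3}I)\,dp$ at $(Q,\Lambda)=(0,0)$. Writing $Q=\epsilon A$ with $A\in\sm3$ and $\Lambda=\epsilon B+O(\epsilon^2)$, the key computation is the identity
\begin{equation*}
\int_{\s2}(Ap\cdot p)\left(p\otimes p-\tfrac{1}{3}I\right)dp=\frac{8\pi}{15}A
\end{equation*}
for $A\in\sm3$, which is a direct corollary of \Cref{propFourthOrder}. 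Also $\int_{\s2}Ap\cdot p\,dp=0$ by tracelessness, so the first-order variation of $Z$ vanishes. Inserting these at $(0,0)$ and matching $O(\epsilon)$ terms yields $A=-\frac{2}{15\eta}A+\frac{2}{15}B$, hence
\begin{equation*}
d\Lambda(0)[A]=\left(\tfrac{15}{2}+\tfrac{1}{\eta}\right)A=\tfrac{15}{2}\left(1+\tfrac{2}{15\eta}\right)A.
\end{equation*}

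Then I compute $dh(0)$ by the chain rule through the $Q$-dependence and through $\Lambda(Q)$. Because the unperturbed integrand $\int_{\s2}(p\otimes p-\frac{1}{3}I)\,dp$ is zero, the explicit $Q$-dependence (which enters only through $Z$) contributes nothing at $(0,0)$; and the variation of $Z$ caused by a perturbation of $\Lambda$ likewise produces a prefactor times this vanishing integral. Only the direct derivative with respect to $\Lambda$ inside the exponential survives, giving $dh(0)[A]=\frac{1}{-4\pi\eta}\cdot\frac{8\pi}{15}\, d\Lambda(0)[A]=-\frac{2}{15\eta}\,d\Lambda(0)[A]$. Combining,
\begin{equation*}
\frac{\partial^2 J}{\partial Q^2}(0)[A]=\left(1+\tfrac{2}{15\eta}\right)d\Lambda(0)[A]=\tfrac{15}{2}\left(1+\tfrac{2}{15\eta}\right)^2 A,
\end{equation*}
which is the claimed formula, with the identity operator on $\sm3$ emerging because every step acts as scalar multiplication on $A\in\sm3$ thanks to \Cref{propFourthOrder}.

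The main obstacle is bookkeeping: one must be careful that the chain-rule contributions through $Z(Q,\Lambda(Q))$ in both $d\Lambda(0)$ and $dh(0)$ are correctly tracked, and that the pseudo-Haar/isotropy computation of $\int_{\s2}(Ap\cdot p)(p\otimes p-\frac{1}{3}I)\,dp$ gives the factor $\frac{8\pi}{15}$ rather than something involving the full fourth-moment tensor. The cancellation that leaves only two surviving first-order terms in each expression is where the scalar structure appears, making the final answer proportional to the identity on $\sm3$.
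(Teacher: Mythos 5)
Your proof is correct and follows the paper's path in substance: you compute $d\Lambda(0)$ by implicitly differentiating the dual optimality relation and invoke the fourth-moment identity from \Cref{propFourthOrder}, which are exactly the two ingredients the paper uses. The only difference is organizational --- the paper writes the Hessian as $\frac{\partial^2 F}{\partial Q^2}+\frac{\partial^2 F}{\partial\lambda\partial Q}\frac{\partial\Lambda}{\partial Q}$ in terms of the blocks of the Hessian of the dual function $F(Q,\lambda)$, while you Taylor-expand $\frac{\partial J}{\partial Q}=\Lambda(Q)-h(Q)$ directly; unwinding $h=\Lambda-\frac{\partial F}{\partial Q}$ shows the two computations are algebraically identical.
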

\begin{proof}
First recall that $J$ can be written as $J(Q)=\max_{\lambda\in\sm3} F(Q,\lambda)=F(Q,\Lambda(Q))$ with 
\begin{equation}F(Q,\lambda) =Q\cdot \lambda - \ln \left(\int_{\mathbb{S}^2}\exp(\lambda p \cdot p)(Qp\cdot p-\eta)\,dp\right).\end{equation}
Thus the derivative of $J$ can be written as 
\begin{equation}
\begin{split}
\frac{\partial J}{\partial Q_{ij}} =& \frac{\partial F}{\partial Q_{ij}} + \frac{\partial F}{\partial \lambda_{\alpha\beta}}\frac{\partial \Lambda_{\alpha\beta}}{\partial Q_{ij}}\\
=& \frac{\partial F}{\partial Q_{ij}}
\end{split}
\end{equation}
due to the optimality condition of $\Lambda(Q)$. Similarly the second derivative can be given as 
\begin{equation}
\begin{split}
\frac{\partial^2F}{\partial Q_{ij}\partial Q_{kl}}=& \frac{\partial^2F}{\partial Q_{ij}\partial Q_{kl}}+ \frac{\partial^2F}{\partial \lambda_{\alpha\beta}\partial Q_{ij}}\frac{\partial \Lambda_{\alpha\beta}}{\partial Q_{kl}}.
\end{split}
\end{equation}
Furthermore the derivative of $\Lambda$ with respect to $Q$ can be found as 
\begin{equation}
\begin{split}
0=&\frac{\partial F}{\partial \lambda_{ij}}(Q,\Lambda(Q))\\
0=& \frac{\partial^2 F}{\partial \lambda_{ij}\partial Q_{kl}}+\frac{\partial^2F}{\partial \lambda_{ij}\partial \lambda_{\alpha\beta}}\frac{\partial\Lambda_{\alpha\beta}}{\partial Q_{kl}}\\
-\left(\frac{\partial^2F}{\partial\lambda^2}\right)^{-1}_{ij,\alpha\beta}\frac{\partial^2 F}{\partial \lambda_{\alpha\beta}\partial Q_{kl}}=&\frac{\partial \Lambda_{ij}}{\partial Q_{kl}}.
\end{split}
\end{equation}
By an argument similar to \cite{taylor2015maximum}, 
\begin{equation}\frac{\partial^2F}{\partial\lambda^2}(Q,\Lambda(Q))=Q\otimes Q-\int_{\mathbb{S}^2}f_Q(p)\left(p\otimes p-\frac{1}{3}I\right)^{\otimes 2}\,dp .\end{equation}
At the isotropic state, this reduces to 
\begin{equation}\frac{\partial ^2F}{\partial \lambda^2}(0,0)=-\frac{1}{4\pi}\int_{\mathbb{S}^2}\left(p\otimes p-\frac{1}{3}I\right)^{\otimes 2}\,dp.\end{equation}
Using \Cref{propFourthOrder}, viewing this as a linear operator on $\sm3$, and denoting $\text{Id}:\sm3\to\sm3$ the identity operator, this identified with 
\begin{equation}\frac{\partial^2F}{\partial\lambda^2}(0,0)=-\frac{2}{15}\text{Id}.\end{equation}
Next to find the mixed derivative 
\begin{equation}
\begin{split}
\frac{\partial^2F}{\partial \lambda_{ij}\partial Q_{kl}}=& \frac{\partial}{\partial Q_{kl}}\left(Q_{ij}-\frac{1}{Z}\int_{\mathbb{S}^2}\exp(\lambda p\cdot p)(Qp\cdot p-\eta)\left(p_ip_j-\frac{1}{3}\delta_{ij}\right)\,dp\right)\\
=&\delta_{ik}\delta_{jl}-\frac{\int_{\mathbb{S}^2}\exp(\lambda p\cdot p)\left(p_ip_j-\frac{1}{3}\delta_{ij}\right)\left(p_kp_l-\frac{1}{3}\right)\,dp}{Z}\\
&+\frac{\int_{\mathbb{S}^2}\exp(\lambda p\cdot p)(Qp\cdot p-\eta)\left(p_ip_j-\frac{1}{3}\delta_{ij}\right)\,dp\int_{\mathbb{S}^2}\exp(\lambda p\cdot p)(Qp\cdot p-\eta)\left(p_kp_l-\frac{1}{3}\delta_{kl}\right)\,dp}{Z^2}.
\end{split}
\end{equation}
Taking $Q=\lambda=0$, then 
\begin{equation}
\begin{split}
\frac{\partial^2F}{\partial \lambda_{ij}\partial Q_{kl}}(0,0)=&\delta_{ik}\delta_{jl}+\frac{1}{4\pi\eta}\int_{\mathbb{S}^2}\left(p_ip_j-\frac{1}{3}\delta_{ij}\right)\left(p_kp_l-\frac{1}{3}\delta_{kl}\right)\,dp\\
=& \left(1+\frac{2}{15\eta}\right)\text{Id}_{ijkl}.
\end{split}
\end{equation}
Finally, for the second derivative with respect to $Q$, 
\begin{equation}
\begin{split}
\frac{\partial^2 F}{\partial Q_{ij}\partial Q_{kl}}=& \frac{\partial}{\partial Q_{kl}}\left(\lambda_{ij}-\frac{1}{Z}\int_{\mathbb{S}^2}\exp(\lambda p \cdot p)\left(p_ip_j-\frac{1}{3}\delta_{ij}\right)\,dp\right)\\
=&\frac{1}{Z^2}\int_{\mathbb{S}^2}\exp(\lambda p \cdot p)\left(p_ip_j-\frac{1}{3}\delta_{ij}\right)\,dp\int_{\mathbb{S}^2}\exp(\lambda p \cdot p)\left(p_kp_l-\frac{1}{3}\delta_{kl}\right)\,dp,
\end{split}
\end{equation}
therefore evaluating at $Q=\lambda=0$ gives $\frac{\partial^2F}{\partial Q^2}(0,0)=0$. Now combining these results gives 
\begin{equation}
\begin{split}
\frac{\partial^2 J}{\partial Q_{ij}\partial Q_{kl}}(0)=& \frac{\partial^2F}{\partial Q_{ij}\partial Q_{kl}}(0,0)+ \frac{\partial^2F}{\partial \lambda_{\alpha\beta}\partial Q_{ij}}(0,0)\frac{\partial \lambda_{\alpha\beta}}{\partial Q_{kl}}(0,0)\\
=& \left(1+\frac{2}{15\eta}\right)\text{Id}_{\alpha\beta ij}\cdot \frac{15}{2}\text{Id}_{\alpha\beta\gamma\delta}\left(1+\frac{2}{15}\eta\right)\text{Id}_{\gamma\delta kl}\\
=&\frac{15}{2}\left(1+\frac{2}{15\eta}\right)^2\text{Id}_{ijkl}.
\end{split}
\end{equation}
\end{proof}

\end{document}